\documentclass[oneside]{aims} 

\usepackage{paralist}
\usepackage{amsmath}
\usepackage[misc]{ifsym}
\usepackage[colorlinks=true]{hyperref}

\usepackage{amssymb,amsfonts}%
\usepackage{amsthm}%
\usepackage{mathrsfs}%

\usepackage{mathtools}

\usepackage[font=footnotesize,labelformat=simple]{subcaption}
\captionsetup[subfigure]{skip=0.2\baselineskip,labelfont=rm,font=footnotesize}
\captionsetup[figure]{skip=0.5\baselineskip}


\usepackage[numbers]{natbib}

\usepackage{csquotes}
\usepackage{braket}

\usepackage{footnote}

\usepackage{tikz,pgfplots}
\pgfplotsset{compat=1.18}
\usepackage{tikz-qtree,tikz-qtree-compat}
\usepackage{tikz-cd}
\usetikzlibrary{arrows,decorations.markings,babel}
\usetikzlibrary{shapes.geometric,fit}

\tikzset{
commutative diagrams/.cd,
arrow style=tikz,
diagrams={>=latex}}

\tikzset{double line with arrow/.style args={#1,#2}{decorate,decoration={markings,%
mark=at position 0 with {\coordinate (ta-base-1) at (0,1pt);
\coordinate (ta-base-2) at (0,-1pt);},
mark=at position 1 with {\draw[#1] (ta-base-1) -- (0,1pt);
\draw[#2] (ta-base-2) -- (0,-1pt);
}}}}
\tikzset{
   dashellipse/.style={ellipse,draw,dashed,inner sep=0pt,blue,fit={#1}}
}


\makeatletter
\newcommand{\succprec}{\mathrel{\mathpalette\succ@prec{\succ\prec}}}
\newcommand{\nsuccprec}{\mathrel{\,\not\!\succprec}}

\newcommand{\succ@prec}[2]{\succ@@prec#1#2}
\newcommand{\succ@@prec}[3]{%
  \vcenter{\m@th\offinterlineskip
    \sbox\z@{$#1#3$}%
    \hbox{$#1#2$}\kern-0.4\ht\z@\box\z@
  }%
}
\makeatother

\allowdisplaybreaks

\textheight=8.2 true in
 \textwidth=5.0 true in
  \topmargin 30pt
   \setcounter{page}{1}





\newtheorem{theorem}{Theorem}[section]
\newtheorem{corollary}[theorem]{Corollary}

\newtheorem{lemma}[theorem]{Lemma}
\newtheorem{proposition}[theorem]{Proposition}

\theoremstyle{definition}
\newtheorem{definition}[theorem]{Definition}
\newtheorem{remark}[theorem]{Remark}

\newtheorem{example}[theorem]{Example}
\newtheorem{observation}[theorem]{Observation}

\makeatletter
\newcommand{\centersymbol}[2]{%
  \mathrel{\vphantom{#1#2}\mathpalette\center@symbol{{#1}{#2}}}%
}
\newcommand{\center@symbol}[2]{%
  \center@@symbol{#1}#2%
}
\newcommand{\center@@symbol}[3]{%
  \ooalign{\hss$#1\m@th#2$\hss\cr\hss$#1\m@th#3$\hss\cr}%
}
\makeatother

\newcommand{\cut}{\centersymbol{/}{\mathrel{-}\joinrel\mathrel{-}}}
\newcommand{\glue}{\centersymbol{\bullet}{\mathrel{-}\joinrel\mathrel{-}}}

\newcommand{\djunion}{\sqcup}
\newcommand{\bigdjunion}{\bigsqcup}


\title[Discrete Level Set Persistence for Finite Discrete Functions]
{Discrete Level Set Persistence for Finite Discrete Functions} 

\author[Robin Belton and Georg Essl]{}

\subjclass{Primary: 55N31; Secondary: 37M10, 05C90.}
\keywords{time series, topological data analysis, finite ordered sets, duality, surgery}




\usepackage[final]{changes}
\newcommand{\marginnote}[1]{}
\newcommand{\addedbegin}{}
\newcommand{\addedend}{}
\newcommand{\robin}[1]{} 
\newcommand{\georg}[1]{}


\begin{document}

\pgfplotsset{
standard/.style={
    axis x line=middle,
    axis y line=middle,
    enlarge x limits=0.15,
    enlarge y limits=0.15,
    every axis x label/.style={at={(current axis.right of origin)},anchor=north west},
    every axis y label/.style={at={(current axis.above origin)},anchor=north east},
    every axis plot post/.style={mark options={fill=white}}
    }
}

\maketitle

\centerline{\scshape
Robin Belton$^{{\href{mailto:rbelton@vassar.edu}{\textrm{\Letter}}}1}$
and Georg Essl$^{{\href{mailto:essl@uwm.edu}{\textrm{\Letter}}}2}$}

\medskip

{\footnotesize
 \centerline{$^1$
Department of Mathematics and Statistics, Vassar College, Poughkeepsie, NY 12604, USA}
} 

\medskip

{\footnotesize
 \centerline{$^2$Department of Mathematical Sciences, University of Wisconsin-Milwaukee, Milwaukee, WI 53211, USA}
}

\bigskip



\begin{abstract}
We study sublevel set and superlevel set persistent homology on discrete functions through the perspective of finite ordered sets of both linearly and cyclically ordered domains. Finite ordered sets also serve as the codomain of our functions making all arguments finite and discrete. We prove duality of filtrations of sublevel sets and superlevel sets that undergirds a range of duality results of sublevel set persistent homology without the need to invoke complications of continuous functions or classical Morse theory. We show that Morse-like behavior can be achieved for flat extrema without assuming genericity. Additionally, we show that with inversion of order, one can compute sublevel set persistence from superlevel set persistence, and vice versa via a duality result that does not require the boundary to be treated as a special case. Furthermore, we discuss aspects of barcode construction rules, surgery of circular and linearly ordered sets, as well as surgery on auxiliary structures such as box snakes, which segment the ordered set by extrema and monotones.
\end{abstract}


\section{Introduction}\label{sec1}

Finite discrete functions are the fundamental data representation in numerous application areas. An important class of this type are finite discrete time series. Any sequential sensor data read out at discrete moments of time and stored digitally will lead to such a series. A few other examples include digital audio signals that take microphone readings of air undulations and is then converted to digital representations via an analog-digital-converter \cite{steiglitz1997digital,sanderson2017computational}, EEG signals that are measurements of electric nerve activity  \cite{chaddad2023electroencephalography}, and single and aggregate stock market data \cite{shah2021filtering}. Discrete functions also occur in a wide range of models of time-dynamic natural phenomena such as neural activation functions \cite{urban2018neural}. Unless computation is symbolic, computer representation of any one-dimensional function tends to be represented discretely both in terms of the domain and the codomain. 

A vast body of literature exists that create methodology to analyze, synthesize, and manipulate such data (for digital signal processing see \cite{steiglitz1997digital}). Topological methods have become a topic of interest over the last two decades \cite{edelsbrunner2010computational} and are part of the rapidly growing field of topological data analysis (TDA) which focuses on the ``shape" of data \cite{carlsson2009topology}. Homology focuses on the connected components and holes of a topological space. \emph{Persistent Homology} is a fundamental tool in TDA where the idea is to analyze how the homology of a topological space changes as we vary a parameter. A common setting is to vary the height parameter $h$ of \emph{sublevel sets} of a function, $f^{-1}(-\infty, h]$. Encoded in the \emph{persistence barcode} are the heights when a homological feature first appeared (``born") and the heights when homological features merged together (``died").

The purpose of this paper is to create a rigorous and user-friendly framework for discrete level set persistent homology of finite discrete functional data. Level sets of functions \cite{bendich2013homology} provide a way to describe topological invariants of functions via the homological changes in level sets.

\subsection{Techniques for Analyzing Discrete Functions}
An important goal of the current work is to avoid making any restriction on data. Data should be able to be processed without the need to impose restrictions or perturbations. We want to apply techniques and theories directly on the dataset.\marginnote{Review Y: Major 4}

\marginnote{Review Y: Minor 1}\added{There are numerous advantages of having a technique not contain restructions, including the ease of use, the ability to use it in settings where the perturbations are undesirable, and a theoretical understanding about the case the restrictions try to avoid.} 

In prior work, one routinely finds restrictions on the data, either for theoretical reasons or to
simplify structures or arguments. These have numerous origins, such as an attempt to characterize extrema, the ability to restrict to binary merge trees, or an attempt to guarantee some desirable algebraic structure. For example, genericity, the requirement that only one extremum is present at any one level, allows one to guarantee that merge trees are binary \cite{baryshnikov2024time,CURRY2024102031}. The notion of genericity also helps to simplify the auxiliary data structures \cite{biswas2023geometric}. Lack of genericity is dealt with by perturbation or tie-breaking arguments \cite{glisse2023fast}. Extrema are often assumed to be isolated, leaning on Morse-theoretic arguments \cite{cohen2005stability,curry2018fiber}, or alternatively assumed to be characterized by homological change \cite{cohen2005stability,bubenik2014categorification,govc2016definition}.
In this paper, we show that in the finite discrete setting, it is possible to compute persistent homology without these restrictions. Furthermore, we have a rigorous notion of when homological change occurs. Our work is comparable to saddle point treatment regarding merge trees for real analytic functions \cite{Beketayev2014DistanceMergeTrees} in the context of discrete data without function theoretic assumptions. \marginnote{Review Y: Minor 2}\added{In the language of Morse-theory, we show how to precisely handle the total or partial absence of a gradient in the discrete setting by showing how to identify extrema types without it.}

Furthermore, implementation on a digital computer leads to discrete numerical representations. Continuous domains such as the real line are replaced by discrete stored entities. Additionally, continuous codomains are often thought of as potentially bounded intervals of the real line, and are approximated by discrete entities such as floating-point number representations. In this paper, we discuss discrete models of all domains and codomains in consideration. We will see that this introduces a certain level of convenience because \marginnote{Review Q: Minor 1}\replaced{pathological situations}{pathological cases} of the continuous case that need to be handled with care \cite{govc2016definition} are not present in this setting.

Purely discrete models are not new in data processing. One of the most successful and useful discrete data analysis algorithms is the Fast Fourier Transform (and more generally Discrete Fourier Transform) \cite{cooley1965algorithm}, which are algorithms that take discrete finite data as input and produce the same as output. Hence, these algorithms are understood and articulated as discrete-to-discrete relationships. \marginnote{Review Y: Major 3}\added{Discretizations are classical in digital signal processing and are known as sampling - referring to the signal domain - and quantization, when referring to the signal codomain \cite{steiglitz1997digital}. Given that we merely require an order of these discrete sets, we arrive at a more general setting that captures deformations of both the domain and codomain. All our results are valid up to order preservation. More specifically, some classical results such as the symmetry theorem from \cite{cohen2009extending,edelsbrunner2010computational} relate sublevel and superlevel set persistent homology by considering sublevel sets of both $f$ and $-f$. These results can be expanded more generally to $f$ and any \emph{order inversion} of $f$ in which the order of the heights of the samples, or image of $f$ are reversed. Taking the negative of a function is one type of order inversion.}

In this paper, we \deleted{mirror this setting and} seek to create a level set persistence theory that operates on finite discrete spaces assuming nothing more.

\subsection{Related Work on Level Sets of Discrete Functions}
The use of level sets on discrete functions is not new. It is already used for a range of signal analysis and processing tasks. A sublevel set persistence computation for time series was given by \cite{Myers2022additivenoise} and available as part of the Teaspoon library \cite{myers2020teaspoon}\footnote{\url{https://teaspoontda.github.io/teaspoon/sublevel_set_persistence.html}}. Due to the use of the {\tt find\_peaks} function of the scipy Python library, this algorithm handles flat extrema. Similarly, Glisse discusses a fast algorithm for computing level set persistence \cite{glisse2023fast} on the model assumption that extrema are at least locally generic, though suggesting that this limitation can be removed by local tie-breaking rules, which implies the ability to handle flat extrema\footnote{Available as part of GUDHI \url{https://github.com/GUDHI/TDA-tutorial?tab=readme-ov-file}}. \marginnote{Review Q: Major 1}\replaced{Our work gives a rigorous explanation that flats can indeed be handled directly.}{The validity of these claims of handling flat extrema does not appear to have been fully articulated. The current paper fills this gap.} \marginnote{Review Y: Major 2}  Baryshnikov \cite{baryshnikov2024time} discussses an algorithm for finite time series data but requires genericity and isolated extrema. Our work removes these restrictions.

\marginnote{Review X}\added{Our setting can also be interpreted as a lower dimensional version of work on digital gray-scale images in two and three dimensions as well as other flow-based simulations in these dimensions. In these settings, Morse theory plays a prominent role in informing techniques \cite{gyulassy2008practical,gunther2011efficient,robins2011theory,magillo2013morphologically,rocca2024disambiguating} and the issues of flats and saddle points are addressed, usually by perturbative or tie-breaking rules. A classical approach to resolve degeneracies in this manner is known as simulation of simplicity \cite{edelsbrunner1990simulation}, which performs an application-agnostic general preprocessing layer to ensure the dataset as presented to an algorithm is non-degenerate. However, data points that are at the same level are perturbed in this sense and can lead to artifacts that have been described as spurious critical points \cite{gyulassy2008practical,finken2024localized,rocca2024disambiguating}. Perturbative methods for dealing with degeneracies in computational geometry have also been critiqued on theoretical grounds \cite{seidel1998nature}. In areas where flow assumptions are natural, or gradients are sought, Morse theory forms a natural fit, and there is a body of work that addresses the issues of flats and saddles in this context. For example, the digital simulation of water flow on terrains is an area of application. Approaches include assigning flow to flat areas to maintain global flow and thus Morse properties \cite{magillo2013morphologically} or combining waterfall transforms with discrete Morse theory \cite{comic2016computing}.}\marginnote{Review Y: Major 2} 

Our work has some similar features as a project to study sublevel set persistence of functions and branching graphs of functions \cite{biswas2023geometric,ost2024banana} and their dynamic update \cite{cultrera2024dynamically}. This work uses a structure they call a {\em window} to help construct barcodes following the elder rule \cite{edelsbrunner2010computational} and generally arguments are made on continuous functions rather than discrete ones. We \deleted{will} use a box snake structure \cite{essl2024deform} that is loosely similar, but makes no assumption on a given barcode construction rule. Additionally, none of our arguments assume genericity. 

\subsection{Contributions}

Contributions of this work include:
\begin{enumerate}
    \item Provide a user-friendly framework for discrete level set persistence on finite discrete functions. These can come from sampling processes from application domains or thought of as discrete versions of familiar functions such as, $f:X\to \mathbb{R}$. In particular, $X$ is a finite set and this domain can be viewed as an interval or as a circle (see Figure \ref{fig:sampledsine}). We develop a framework using a set theoretic perspective and provide a web-based applet for users to explore techniques presented throughout this paper. This allows for a framework that data \replaced{practitioners}{practicioners} \marginnote{Review Z: Minor 2} from a wide range of expertise can understand and use.
    \item Develop sublevel and superlevel set persistence jointly without making any genericity assumptions or requiring isolated extrema. In particular, we prove results on the Morse-like behavior of level set persistence for finite discrete functions. We also discuss duality results between sublevel and superlevel set persistence in both the interval and circular domain setting. 
    \item Discuss the nature of barcode construction rules in this setting, and articulate novel notions of barcode constructions that differ from elder rules, and instead address concerns arising from shifting data. These modifications lead to a more stable barcode, meaning that adding a new point to the discrete function only slightly changes the barcode. 
    \item A theory of surgery for box snake structures. Box snakes keep track of minima, maxima, \deleted{as well as} \added{and} monotone sequences between them. They allow for computation of barcodes as well as homology-preserving deformations. The surgery allows us to realize arbitrary length shifts of data via surgery steps, and hence enables a streaming application of the method.
\end{enumerate}

The remainder of the paper is organized as follows. In section \ref{sec:motivation}, we discuss the motivation for the current approach from previous work on sublevel set persistent homology on functions. In section \ref{sec:setting}, we discuss our settings and key results from the settings, including the central Proposition \ref{prop:trisection} and Theorem \ref{thm:strictsetduality}. The Morse-like behavior \replaced{for}{in our} discrete finite set\replaced{s}{ setting} \marginnote{Review Q: Minor 2} is developed in section \ref{sec:morse}. In section \ref{sec:barcode}, we discuss barcode construction, including a proposal for alternative constructions different from the widely used elder rule. This is followed by section \ref{sec:dual}, which discusses duality results, where virtually all of them are a consequence of Proposition \ref{prop:trisection} and Theorem \ref{thm:strictsetduality} on the duality under chains of inclusion \ref{thm:strictsetduality}. In section \ref{sec:snakeboxes} we discuss box snakes, an auxiliary data structure that segments \deleted{the} discrete data into homologically relevant and homologically invariant blocks. Surgery of barcodes and box snakes is discussed in section \ref{sec:surgery}, and we conclude in section \ref{sec:conclusion}. The appendix gives details of some relationships of interest such as the relationship of the codomain $\mathbb{R}$ and finite discrete sets in Appendix \ref{app:quantsamp}, and the relationship of finite ordered sets to a simplicial representation in Appendix \ref{app:connected}.
 
\section{Motivation and Prior Approaches}\label{sec:motivation}

The typical setting for sublevel set persistent homology of functions is that of a function over the reals $f:\mathbb{R}\rightarrow\mathbb{R}$ with additional assumptions. Sometimes the domain does follow a discrete process such as that of a time series \cite{baryshnikov2024time} yet the codomain remains that of the reals.

\subsection{Morse Theory and its Connections to TDA}
Morse theory \cite{milnor1963morse} is a way of studying the topology of a manifold or complex via differentiable real-valued functions on that manifold or complex. The basic observation is that the topology changes at the critical points. 

Consider a real-valued smooth function $f:M\to \mathbb{R}$ on a differentiable manifold $M$. If the matrix of second partial derivatives (the \emph{Hessian} matrix) at a critical point is non-singular, then that critical point is \emph{non-degenerate}.  A corollary to the \emph{Morse Lemma} is that non-degenerate critical points are isolated.

Two fundamental results in Morse theory are:
\begin{enumerate}
    \item If $a<b$, $f^{-1}[a,b]$ is \emph{compact}, and there are no critical values in $[a,b]$ then $f^{-1}(-\infty,a]$ is \emph{diffeomorphic} to $f^{-1}(-\infty, b]$ (i.e., they have the same topology).
    \item If $p$ is a non-degenerate critical point of $f$, then for small $\varepsilon>0$, $f^{-1}(-\infty, p-\varepsilon]$ is not diffeomorphic to $f^{-1}(-\infty, p+\varepsilon]$ (i.e., the topology changes at $p$). 
\end{enumerate}

In regards to (2), knowing the \emph{index} of the critical point tells you exactly how the topology changes \marginnote{Review Y: Minor 2}\added{\cite{milnor1963morse}}, and one can construct a \emph{CW-complex} that is homotopy equivalent to the manifold \added{by using appropriate attachment maps \cite{banyaga2004lectures}}. Specifically, $M$ is homotopy equivalent to a CW-complex that has one cell of dimension $i$ for each critical point of $f$ of index $i$.

Classical Morse theory needs to make assumptions on the function $f$. In particular, it requires that extrema be isolated such that the Hessian is indeed non-degenerate. Then we have a unique way to locally characterize the extrema. Furthermore, for a function $f$ to be Morse, it is required that extrema are generic in order to make the map a function and not multi-valued. Underlying this is the characterization of extrema by local curvature and as a function.

\marginnote{Review Y: Minor 3}\added{Bott \cite{bott1954nondegenerate} showed that classical Morse theory, often referred to as Morse-Bott theory \cite{banyaga2004lectures}, can be generalized from critical points to critical manifolds. The requirement is that the critical set is a disjoint union of submanifolds whose Hessian is non-degenerate in the normal direction of these manifolds. Geometrically, this is saying that these manifolds lie flat. The connected set of critical points in this theory now forms a critical submanifold, and expands the role of the critical point. For example, a round torus lying on a surface can be modeled this way. To be clear about the context, the Morse function may be referred to as the Morse-Bott function in this setting. However, the same requirements apply to Morse and for Morse-Bott functions that remain in place. Critical submanifolds are required to be isolated. For example, a flat annulus cannot be Morse-Bott as the Hessian vanishes on the flat part of the annulus. Critical submanifolds still needs to be generic.}

\added{It is a standard proof in Morse theory that while not all functions are Morse, most functions are Morse, and all functions can be made to be Morse because they are generic, meaning that a mild perturbation of non-Morse function will make it Morse \cite{bott1982lectures}. This is the basis for perturbative arguments in the use of Morse theory in topological data analysis.}%

Discrete Morse theory was developed by Robin Forman \cite{forman2002user} and is an analog to Morse theory on smooth manifolds. A common use of discrete Morse theory is to reduce the number of simplices in a simplicial complex so that the homotopy type of the simplicial complex is preserved. This is done by constructing a \emph{discrete Morse function} on the simplicial complex using a gradient vector field. This tells the user how to apply a sequence of \emph{elementary simplicial collapses} to reduce the original simplicial complex while preserving topological information. Forman's Discrete Morse theory mirrors the requirements of classical Morse theory in that discrete gradients on simplices have to be locally definable.

Both classical and discrete Morse theory are ubiquitous in TDA. Many computational implementations of persistent homology use discrete Morse theory in order to reduce the \marginnote{Review Z: Minor 1}\replaced{simplicial}{simiplicial} complex \cite{harker2014discrete, mischaikow2013morse}. Furthermore, \emph{Reeb graphs} (Chapter VI, Section 4 of \cite{edelsbrunner2010computational}), traditionally a tool of Morse theory, are commonly used to capture the changes in level sets of a function. Similarly, merge trees capture changes in \marginnote{Review Y: Minor 4}\added{sub-}level sets of a function but are guaranteed to be trees and are more computationally feasible \cite{SmirnovTriplet20}. \emph{Mapper graphs} \cite{singh2007topological} are a visualization tool in TDA and often viewed as a discrete analog to Reeb graphs \cite{munch2016convergence}.

In regards to this paper, many previous results and applications on sublevel set persistence utilize Morse theory, whether explicitly stated or not. This is why many results use assumptions relying on critical points being \replaced{at distinct levels}{isolated}. We refer to this condition as \emph{generic} or general position assumptions \cite{biswas2023geometric, belton2023extremal, CURRY2024102031, curry2018fiber, belton2020reconstructing}. Sometimes ad hoc observations are used to claim that flats do not change barcodes  \cite{glisse2023fast} and that pattern values with identical minima can be perturbed without problems due to stability. Some approaches handle flat extrema implicitly, such as by using extrema finding libraries that handle flat extrema \cite{Myers2022additivenoise}. Finally, Morse theory plays an important role in setting the stage for Extended Persistence \cite{cohen2009extending}. Extended persistence combines sublevel and superlevel set persistence to arrive at a finite barcode representation. It has been used recently as the chosen encoding of persistent homological data on time series with data points in $\mathbb{R}$ \cite{biswas2023geometric,cultrera2024dynamically}.

Although these Morse and general position assumptions are nice for theory and may be suitable for some applications, they are not always practical. Perturbations fundamentally change data. For instance, in audio data, the sound of silence is flat. If we perturb this signal in order to obtain isolated extrema for the purposes of applying Morse theory, we would add irrelevant information to the signal. Furthermore, any extremum from the perturbation of a constant function has no discernible interpretive value. \marginnote{Review Y: Minor 1}\deleted{Additionally, many data practitioners would be hesitant to add noise to data before applying some technique, as that is contrary to how they have been trained.} Incorporating a technique with restrictions on the dataset would require conditioning steps should the technique be incorporated in a larger processing chain. Assume for example that one wants to compute sublevel set persistence on the output of each eigenfunction of a Discrete Fourier Transform. Given that all eigenfunctions are sinusoids, their extrema are not generic. Hence, genericity requirements would necessitate introducing a signal conditioning step that guarantees genericity of this output\added{ and relating of persistent homology of a signal to the persistent homology of its Fourier transform becomes difficult if this relationship is not precise but subject to intermittened perturbation}.

Fortunately, as we show in this paper, flat minima and maxima do not pose an issue when determining where the topology changes in sublevel set persistence of finite discrete functions. Furthermore, the isolation of extrema per level is not required and can be replaced by a sub-filtration within a level. Hence, we construct a framework for analyzing level set persistence without having to change our dataset. \marginnote{Review Y: Minor 2}\added{With respect to Morse theory, we do not require a definition of a flow or gradient and can handle functions that have no gradient anywhere such as constant functions. Our function levels only differ by order, so we do not require any metric structure for our approach.}

\section{Setting}\label{sec:setting}

Our object of study is a finite discrete sequence of discrete levels. This is motivated by the typical computational representation of digital computation, as well as the nature of collected data by digital sensors,  that is stored discretely, and with finite discrete resolution. 
The goal is to provide a user-friendly framework that completely operates in a discrete setting. This means that we will use finite ordered sets for both the domain and the codomain of our discrete functions. The relationship of continuous functions to the discrete setting is discussed in more detail in Appendix \ref{app:quantsamp}.

\begin{remark}
The setting of ordered sets allow us to construct a purely set theoretic discussion of the finite discrete level functions. It also has the advantage that this provides a general setting for many different discretizations that result in the same order.    
\end{remark}

\begin{figure}[ht]
\includegraphics[width=\textwidth]{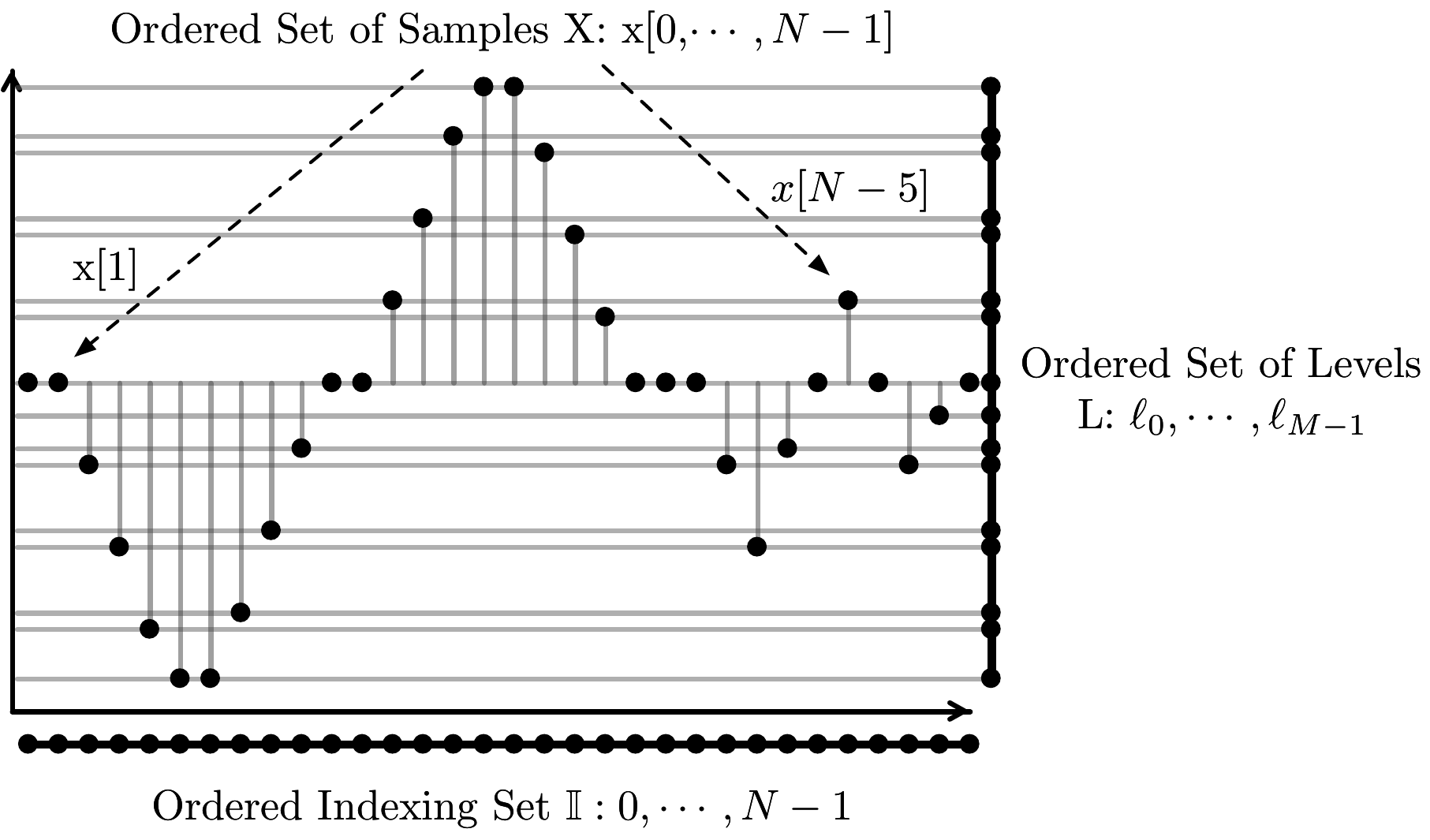}
\caption{Data in our setting. A discrete finite function $x: \mathbb{I}\rightarrow L$ from an ordered finite set in the domain $\mathbb{I}$ to an ordered finite set in the codomain $L$.}\label{fig:setfunction}
\end{figure}

\subsection{Discrete Functions via Ordered Sets}

Consider a finite discrete sequence of a totally ordered set $X:=x[0,\ldots,N-1]$. Denote $\mathbb{I}:=\{0,1,\dots, N-1\}$ as the \emph{indexing set}\footnote{The reader should be warned, that we will be using the zero-based numbering (sometimes also called Engineer's convention). Hence, indices start from $0$, which is different from the convention widely used in pure mathematics which starts index labeling at $1$. This means that the last element is $N-1$ for an index of $N$ elements. This convention is adopted so that the paper's content matches array indexing in source code attached to the paper.} and $L:=\{\ell_0,\ell_1,\ldots,\ell_{M-1}\}$ as the \emph{set of levels}. Then we will use the notation $x[0],x[1]\dots,x[N-1]$ to denote individual maps of the \emph{finite discrete function} 
$x:\mathbb{I} \to L$, where $i\mapsto x[i]$ (see Figure \ref{fig:setfunction}). Since $L$ contains all sequence levels and nothing more, the function $x$ is surjective ($|\mathbb{I}|\geq |L|$ hence $N\geq M$). An element $\ell_\bullet\in L$ is a \emph{level}, and any individual $x[i]$ for $i\in \mathbb{I}$ is called a \emph{sample}. 
    
\subsection{Topology of the Domain}

Recall that $N$ is the number of samples, and all our index sets start with $0$. For all domains, we are considering $x[0], \dots, x[N-2]$ as having a successor defined as $x^\succ[n]=x[n+1]$. We call the relationship $x^\prec[n]=x[n-1]$ as the predecessor relationship, which for all domains is defined for $x[1],\dots, x[N-1]$. If both a successor and predecessor are present, we call it an adjacency relationship $x[n]\succprec x[n-1]$. We call samples that do not have a successor or predecessor {\em boundary samples}. All samples in the complement of the boundary are {\em interior}. In the absence of a boundary, all samples are interior.

    \begin{figure}
    \begin{subfigure}[t]{.49\textwidth}
    \centering
        \begin{tikzpicture}[scale=\textwidth/6.8cm,samples=200]
            \begin{axis}[%
                standard,
                domain = 0:31,
                samples = 32,
                xlabel={$n$},
                ylabel={$x[n]$},
                xticklabel=\empty,yticklabel=\empty
                ]
                \addplot+[ycomb,black,thick] {sin(2*180*x/16)};
                \addplot[black,dashed] {1};
                \addplot[black,dashed] {-1};
            \end{axis}
        \end{tikzpicture}
        \caption{}\label{subf:linear}
        \end{subfigure}
        \hspace{0.001\textwidth}
        \begin{subfigure}[t]{.49\textwidth}
                \begin{tikzpicture}[scale=\textwidth/6.8cm,samples=200]
\begin{axis}
    [
    view={60}{30},
    axis lines=none
    ]
\addplot3[
    domain=0:2*pi,
    samples = 32,
    samples y=0,
]
({sin(deg(x))},
{cos(deg(x))},
{0});
\addplot3[
    domain=0:2*pi,
    samples = 32,
    samples y=0,
    dashed
]
({sin(deg(x))},
{cos(deg(x))},
{-1});
\addplot3[
    domain=0:2*pi,
    samples = 32,
    samples y=0,
    dashed
]
({sin(deg(x))},
{cos(deg(x))},
{1});

\foreach \i in {0,...,32}
    \addplot3 [domain=0:1, samples y=1, black] (({sin(deg(2*pi/32*\i))},{cos(deg(2*pi/32*\i))}, {x*sin(deg(4*pi/32*\i))});
\foreach \i in {0,...,32}
      \addplot3+[mark=*,only marks, draw=black, mark options={color=black},mark options={fill=white},thick] coordinates {({sin(deg(2*pi/32*\i))},{cos(deg(2*pi/32*\i))},{sin(deg(4*pi/32*\i))})};

    \addplot3 [domain=-1.35:1.35, samples y=1, black] (-1,0, x);
    \addplot3 [mark=triangle,black,smooth,point meta=explicit symbolic,nodes near coords] coordinates {(-1,0, 1.35)[{$x[n]$}]};

\end{axis}
\end{tikzpicture}
\caption{}\label{subf:circular}
\end{subfigure}
        \caption{A finite discrete sequence of samples $x[n]$ with $32$ samples of a sampled sine of period $16$. \subref{subf:linear} $\mathbb{I}_{[N]}$: sample sequence domain with boundaries at $0$ and $N-1$. \subref{subf:circular} $\mathbb{I}_N$: sample sequence domain without boundaries on a discrete circular domain of period $N$ .}\label{fig:sampledsine}
    \end{figure}
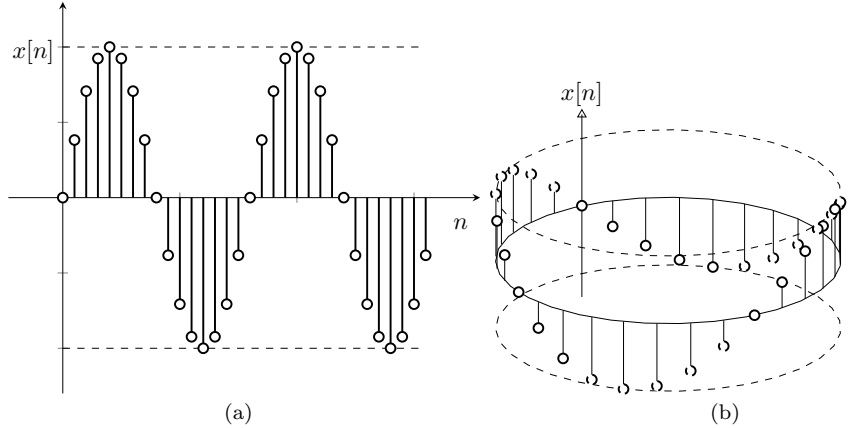

We will consider two cases of sequences (illustrated in Figure \ref{fig:sampledsine}). In the first case, the sequence $x[0],\dots, x[N-1]$ is a linear order with boundary samples at $0$ and $N-1$. We denote this case as $\mathbb{I}_{[N]}$ to indicate the presence of boundaries at $0$ and $N-1$, and call it a {\em linear domain} or {\em linear sequence}. The second case is a periodic sequence with the same number of samples, which we denote as $\mathbb{I}_N$\footnote{This notation alludes to the familiar notation $\mathbb{Z}_N$ for finite cyclic groups.}. We call this a {\em circular domain} or {\em circular sequence}. In the case of a circular sequence $\mathbb{I}_N$, we have $x^\succ[N-1]=x[0]$ and $x^\prec[0]=x[N-1]$. Hence the periodic sequence has no boundary samples and all samples have adjacency relationships on both sides. If we refer to a {\em sequence}, the statement applies to both cases and we simply denote the domain as $\mathbb{I}$.




\subsection{Level sets}

In this paper, we work with level sets that are discrete and finite. We begin by defining the three types of level sets we work with: level, sublevel, and superlevel.

\begin{definition}[Level Set]
A \emph{level set} $L_l$ is the set of all sample positions $i \in \mathbb{I}$ with sample values $x[i]$ at a given level $l\in L$. That is,
$$L_l:=\Set{i \in \mathbb{I} | x[i]=l, \; l\in L}.$$
\end{definition}

\begin{definition}[Sublevel Set]
\label{def:structsublevel}
A sublevel set $L_{<l}$ is the union of all level sets $L_s$ such that $s<l$ in the ordered set $L$. That is,
$$L_{<l}:=\bigcup_{\{s\in L \mid s<l\}}L_s.$$
\end{definition}

\begin{definition}[Superlevel Set]
\label{def:structsuperlevel}
A superlevel set $L_{>l}$ is the union of all level sets $L_s$ such that $s>l$ in the ordered set $L$. That is,
$$L_{>l}:=\bigcup_{\{s\in L \mid s>l\}}L_s.$$
\end{definition}

Notice, that the definition of sublevel and superlevel sets are formally identical except for the direction of the order.

\begin{remark}
In the literature it is customary (compare \cite{dey2007stability}) to define a sublevel set as follows: $L_{\leq l}=f^{-1}(-\infty,l]$. A superlevel set is thusly defined as $L_{\geq l}=f^{-1}[l,\infty)$. These two definitions are not disjoint due to the overlap at $l$. Notice that our versions exclude the level $l$, hence $L_{<l}$, $L_l$, and $L_{>l}$ are disjoint sets.
\end{remark}





\subsection{Relationships of Sublevel, Level, and Superlevel sets}

We will use the following notations for set operations. $\complement$ indicates the set complement $\complement:A=T\setminus B$ where $A$ is the complement, $T$ is the total set, $B$ is the set to be complemented, and $\setminus$ is the usual set subtraction. Notice, that obviously $B=T\setminus A$. Also $\complement\complement$ is the identity operation. This is part of what is known as the {\em principle of duality of sets}. This states that inclusions and equations using intersections, unions, empty sets and total sets are dual in the sense that intersections are exchanged with unions, total sets are exchanged with empty sets, or more generally, subsets are replaced with their complements, and inclusions are replaced by inclusions in the opposite direction \cite{halmos1974naive}. The disjoint unions of $B=A\djunion C$ is characterized by its (left and right) inverses, the set subtraction $A=B\setminus C$ and $C=B\setminus A$. 


The following \deleted{Theorem} \added{proposition} is central to many of the results regarding duality in our setting:

\begin{proposition}[Disjoint Union Trisection of Level Sets]
The disjoint union of a level, sublevel, and superlevel set always form the total set: $T=L_{<l}\djunion L_l\djunion L_{>l}$. This disjoint union is dual under inversion of order $<\leftrightarrow>$.\label{prop:trisection}
\end{proposition}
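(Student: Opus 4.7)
The plan is to reduce the proposition to the trichotomy law on the totally ordered codomain $L$, together with the fact that disjoint union is commutative. The total set here is $T = \mathbb{I}$, and the three sets $L_{<l}$, $L_l$, and $L_{>l}$ are specified entirely by pointwise comparisons between $x[i]$ and the fixed level $l$ in $L$. So almost no algebra should be required beyond unfolding the definitions.

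First I would rewrite each of the three sets in a uniform comprehension style using Definitions~\ref{def:structsublevel} and~\ref{def:structsuperlevel}, namely $L_{<l} = \{i \in \mathbb{I} : x[i] < l\}$, $L_l = \{i \in \mathbb{I} : x[i] = l\}$, and $L_{>l} = \{i \in \mathbb{I} : x[i] > l\}$. From here the pairwise disjointness follows immediately: if some $i$ belonged to two of them, then $x[i] \in L$ would satisfy two incompatible comparisons against $l$, violating trichotomy of the total order on $L$. Exhaustiveness is equally quick: for any $i \in \mathbb{I}$, the value $x[i]$ lies in $L$, and trichotomy places it in exactly one of the three cases, so $\mathbb{I}$ is covered. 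Together these give the disjoint-union trisection $T = L_{<l} \djunion L_l \djunion L_{>l}$.

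For the duality claim under the swap $< \leftrightarrow >$, I would observe that order inversion acts on the trisection as the involution that exchanges $L_{<l}$ with $L_{>l}$ while fixing $L_l$, because $L_l$ is defined by equality only. Since disjoint union is commutative, the resulting decomposition $L_{>l} \djunion L_l \djunion L_{<l}$ is exactly the same partition of $T$, which is the self-duality statement. I do not anticipate any real obstacle here; the statement is essentially a restatement of trichotomy, and the reason to single it out is that it will serve as the combinatorial engine behind the later duality results, most notably Theorem~\ref{thm:strictsetduality}.
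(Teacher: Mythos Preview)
Your proposal is correct and follows essentially the same approach as the paper: the paper's own justification is the one-line observation $L_{<l}\djunion L_l\djunion L_{>l}=L_{>l}\djunion L_l\djunion L_{<l}$, and you have simply unpacked this by invoking trichotomy on the totally ordered codomain for the trisection and commutativity of disjoint union for the self-duality. Your version is more explicit than the paper's, but the underlying idea is identical.
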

This follows from $L_{<l}\djunion L_l\djunion L_{>l}=L_{>l}\djunion L_l\djunion L_{<l}$. Many of our results in this paper are a consequence of this Proposition. The first will be in the context of detecting extrema.

\subsection{Detecting Local Extrema and Monotonicity from Level Sets}

Local extrema play a crucial role throughout. Here we describe how they can be detected from level set properties, leading to their definition in our given setting. In particular we see that flat extrema introduce no difficulties for us.

\begin{definition}{(A Flat).}
A {\em flat} is \marginnote{Review Y: Minor 5}\replaced{the maximal}{a} sequence of one or more consecutive samples of the same level\deleted{, bounded by differing values}:  $x[m-1]\neq x[m]=\ldots =x[n]\neq x[n+1]$. The length of a flat is $n-m$.
\end{definition}

Observe that our definition of a flat includes a single sample at a given level, which we \deleted{will} call {\em isolated}. This definition of flat does not refer to extrema. It applies to any sequence of samples on the same level. 

A sequence of samples is {\em monotonic} if it obeys a partial order: $x[m]\leq \ldots \leq x[n]$. Evidently, our definition of a flat is contained in the definition of a monotonic (sub)sequence. A flat in a monotonic sequence, is bounded by samples either decreasing or increasing (see Figure \ref{fig:flats} for an example of an increasing \marginnote{Review Z: minor 3}\replaced{monotone}{monotome}.)

There are two further cases for flat extrema: (1) a {\em flat minimum}, bounded by samples both larger, (2) a {\em flat maximum}, bounded by samples both smaller. 

The same distinctions can be made within the framework of Proposition \ref{prop:trisection}. Consider any level set $L_l$. It consists of a subset of the total space and we can identify connected components by finding neighboring members in the set to define a {\em flat} in the level set. Hence\added{,} we observe that the notion of a flat can be defined solely within the level set. We call samples directly neighboring a flat in $L_l$ but not part of it the {\em outer boundary} of the flat.

To define a notion of an extremum, or a monotone, however, we need at least one of the sublevel or superlevel sets. Consider the level set $L_l$ and the sublevel set $L_{<l}$ and the superlevel set $L_{>l}$. By the definition of a flat in $L_l$, the outer boundary of the connected component of the flat in $L_l$ is not in $L_l$. Therefore they have to be either in $L_{<l}$ or $L_{>l}$. If both outer boundaries are in the same set then the flat is a local extremum. It is a {\em local minimum} if the outer boundary is in the superlevel set $L_{>l}$ and it is a {\em local maximum} if the outer boundary is in the sublevel set $L_{<l}$. However, due to Proposition \ref{prop:trisection}\added{,} this same observation also holds for superlevel sets, with order inverted. More importantly, we can check just the level set for flatness and just one sublevel or superlevel set to determine if there is an extremum or not, and if yes, we can detect the type of extremum.

For example, we can detect an increasing monotone in a sublevel set if the left outer boundary is in the sublevel set but the right boundary is not. When we dualize to the superlevel set\added{,} we invert the order. Hence\added{,} an increasing monotone is a decreasing monotone in the other. If the domain is linear, we have a further case, namely that one or both outer boundaries of a flat may be outside the finite set of the domain. If one outer boundary lies within the set, then we determine that it is always a local extremum, determined from the outer boundary inside the set. We will see that homologically they play a special role. If there is no outer boundary within the set, then the whole set is necessarily flat and we consider it both a global minimum and maximum (see Corollary \ref{corr:constant}).

Figure \ref{fig:flats} illustrates examples of different ways connected components (flats) in a level set $L_l$ are classified by their outer boundary. We see on the far left and right examples of flats at the boundary. We will call them {\em boundary extrema} to indicate that their classification is only arrived at by one outer boundary point (away from the boundary points of the linear domain). All other points are classified according to our classification based on both outer boundary points.

With local and global extrema defined and classified, we are ready to consider Morse-like behavior in our setting.

\begin{figure}[ht]
\includegraphics[width=\textwidth]{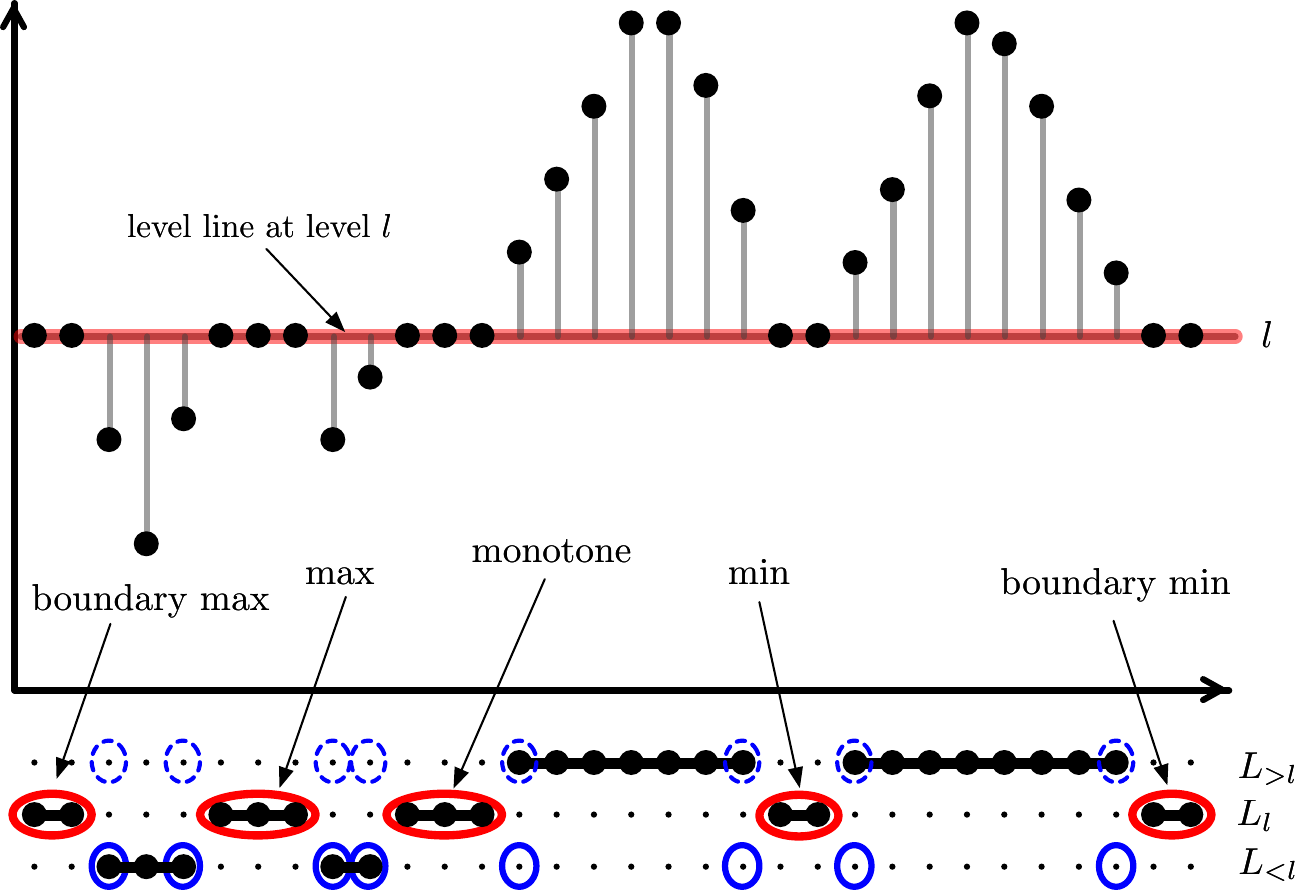}
\caption{Examples of connected components in the level set $L_l$ (red circled) are characterized by their outer boundary in the sublevel set (blue circled positions). Notice that by duality we could equally use the outer boundaries in the superlevel set (dashed blue) to arrive at the same classification.}\label{fig:flats}\marginnote{Review Y, Major 6}
\end{figure}

\subsection{Homology of a Discrete Level Set}

Central to analyzing the topology of sublevel, level, and superlevel sets of a function, is studying the number of \emph{connected components}. On first observation, we are dealing with ordered finite sets. Hence in the sense of point-set topology every member of the set is a singleton and thus we have a discrete topology and nothing is connected. However, due to the predecessor and successor relationship given by the order, we have an {\em adjacency relationship} which carries the spirit of connected components in our setting. 

\begin{definition}[Connected Component]
Let $L_\bullet$ be some subset of $\mathbb{I}$. A \emph{connected component} of $L_\bullet$ is a \added{non-empty} subset $C \subset L_\bullet$ \marginnote{Review Y: minor 6}\replaced{it includes all elements in $C$ that can be}{such that for every $c, d \in C$, $d$ can be} reached by a finite number of successor or predecessor operations.
\end{definition}

\begin{remark}
The above definition can be used on any subset of $\mathbb{I}$. We only study connected components of level sets, sublevel or superlevel sets. Hence the notation $L_\bullet$ is a placeholder for $L_l$, $L_{<l}$, or $L_{>l}$. 
\end{remark}

We can represent the successor/predecessor relationship with a $1$-simplex (abstract line) and turn any subsequence into a simplicial complex of simplicial lines (alternating $0$-simplicies at sample points, and connecting $1$-simplicies). This gives us a conventional connected component in the simplicial setting\deleted{s}\marginnote{Review Q: Minor 3}. See technical details in Appendix \ref{app:connected}. \added{Both these pictures are classical in digital signal processing and have been used in graph and topological signal processing \cite{ortega2022introduction,robinson2014topological} and can be viewed as a dimension-reduced version of $v$-construction cubical complexes used in topological analysis of digital images \cite{bleile2022}.}\marginnote{Review Y: Major 2}

\begin{example}
   Consider the sequence $X: x[0, 1, \ldots, n-1, n, \ldots N-1]$ where $x[0], x[1], \dots, x[n-1]$ are all equal to level $l$ and each $x[n], \dots, x[N-1]$ is not equal to $l$. Then the level set $L_l = \{0,1,\dots, n-1\}$ is a connected component given that each member in the set is adjacent to another. No element of the sequence with index $n$ or greater is in the connected component. The associated simplicial complex of $L_l$ is a line graph with $n$ vertices and $n-1$ edges. 
\end{example}


Furthermore, the usual homology of such a simplicial complex coincides with the notion of homology of finite discrete sequences as used here. Colloquially, torsion-free (or homology on coefficients that do not capture interesting torsion information such as $\mathbb{Z}/2\mathbb{Z}$ or $\mathbb{R}$) homology can be described as counting dimensional voids, which in turn can be computed by counting the number of independent cycles that do not come from boundaries. In the case of dimension zero homology, this counts the number of connected components.

In our setting we have the following definition:

\begin{definition}[Zeroth Betti Number]\label{def:homology0}
The \emph{zeroth Betti number} (or Betti-0, $\beta_0$) is the number of connected components in a subset of $L_\bullet=\mathbb{I}$. We take Betti-0 to capture the zeroth homology $H_0(L_\bullet)$ of the level set $L_\bullet$.
\end{definition}

\begin{remark}
A more conventional algebraic perspective via simplicial homology can be equivalently defined. See Appendix \ref{app:connected}.
\end{remark}





If a subset $L_\bullet$ of a finite discrete indexing set $\mathbb{I}$ contains the whole indexing set, it is called {\em total}.\footnote{What we call a total set is also called a universal set in set theory or sometimes a complete set in functional analysis. We chose total because it unifies the language. We will use the same notion for total sequence, total space, etc., i.e., denoting the colloquial and intuitive notion of containing the totality of what is considered.} If all samples are connected, we call this situation a {\em total connection}. For example, if we have a constant set of samples at level $y$, the level set $L_y$ contains the total connection or is total.

\begin{definition}[First Betti Number]\label{def:homology1}
The \emph{first Betti number} (or Betti-1, $\beta_1$) is the number of cycles in our domain. In our setting, it can only be $0$ or $1$. Namely, $\beta_1$ of a level set is $1$ if and only if the level set is total and the domain is circular. Otherwise, $\beta_1 = 0$. Betti-1 captures the first homology $H_1(L_\bullet)$ of the level set $L_\bullet$. 
\end{definition}

There exists a canonical correspondence between Betti numbers and homology groups via the fundamental theorem of finitely generate\added{d}\marginnote{Review Q: Minor 4} abelian groups \cite{munkres1984elements}. This tells us that the Betti number counts the number of free groups in the homology group. Using $\mathbb{Z}$ as the symbol for a free group, we can describe homology as follows:

The homology (with coefficients in $\mathbb{Z}$) of the total set of the linear domain is $H_0(\mathbb{I}_{[N]})=\mathbb{Z}, H_1(\mathbb{I}_{[N]})=0$ and of the periodic sequence is $H_0(\mathbb{I}_{N})=\mathbb{Z}, H_1(\mathbb{I}_{N})=\mathbb{Z}$. Hence, we observe that the two cases differ in the 1st homology, capturing the presence or absence of a cycle in the domain. In our setting, the absence of a cycle implies the presence of boundaries.
\marginnote{Review Q: Major 2, part 2}\added{$H_1$ in our setting only serves to distinguish between the topology of the total domain, and is not used otherwise.}


\subsection{Sublevel and Superlevel Set Filtrations from Level Sets}

In this section, we develop a key result that states that sublevel and superlevel set filtrations contain dually the same information. It is known that circular domains \deleted{we} have a duality under sign inversion \cite[Corollary 3.4]{biswas2023geometric}. Our result in the same case does not require genericity. Furthermore, we are able to extend this duality to our bounded case in Section \ref{sec:dual}. Proposition \ref{prop:trisection} together with the principle of duality of sets then leads to duality with respect to a sequence of inclusions between sublevel and superlevel sets.

\begin{theorem}[Duality of Level Sets]
Any construction from inclusion maps involving level sets that satisfy Proposition \ref{prop:trisection} is dual under change of order and inclusions in the opposite direction. \label{thm:strictsetduality}
\end{theorem}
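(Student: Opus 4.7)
The plan is to combine Proposition \ref{prop:trisection} with the principle of duality of sets recalled earlier in this section. The trisection $T = L_{<l} \djunion L_l \djunion L_{>l}$ is manifestly symmetric under interchange of the symbols $<$ and $>$, so inverting the order on $L$ swaps $L_{<l}$ with $L_{>l}$ while leaving each $L_l$ fixed. The principle of duality of sets supplies the companion fact: any inclusion $A \subseteq B$ inside $T$ is equivalent to the reverse inclusion $T \setminus B \subseteq T \setminus A$ obtained by taking complements in $T$.

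The main step is to check how a single inclusion between sublevel sets transforms. For two levels $l_1 < l_2$, applying Proposition \ref{prop:trisection} at both levels yields $L_{<l_1} \subseteq L_{<l_2}$ together with the dual inclusion $L_{>l_2} \subseteq L_{>l_1}$. Under order inversion, $l_2$ now precedes $l_1$, and the trisection symmetry identifies the sublevel set at $l_i$ in the inverted order with the superlevel set $L_{>l_i}$ in the original order. Hence the inclusion playing the role of $L_{<l_1} \hookrightarrow L_{<l_2}$ under the inverted order is precisely $L_{>l_2} \hookrightarrow L_{>l_1}$: the same kind of inclusion, but with the subscripts in the opposite sense, so that the arrow points from the higher to the lower index relative to the original order.

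It remains to extend this from a single inclusion to an arbitrary construction. Any construction from inclusion maps involving level, sublevel, and superlevel sets is a commuting diagram in the poset of subsets of $T$ whose objects are drawn from $\{L_{<l}, L_l, L_{>l}\}_{l \in L}$. Replacing each object by its order-inverted counterpart (a permutation of these objects, by trisection symmetry) and each arrow by its reverse (justified arrow by arrow by the principle of duality of sets together with trisection) produces a commuting diagram of the same shape, which is the claimed dual construction. The main obstacle to making this fully formal is simply pinning down what "any construction" means; once a construction is presented as such a diagram, the proof reduces to applying the two structural ingredients above pointwise to each object and each arrow, and the theorem then underwrites the concrete duality results derived in Section \ref{sec:dual}.
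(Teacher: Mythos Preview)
Your proposal is correct and follows essentially the same route as the paper: both arguments rest on the trisection of Proposition~\ref{prop:trisection} together with the principle of duality of sets, and both identify the superlevel inclusion chain as the order-inverted dual of the sublevel inclusion chain. The paper's proof is terser---it simply constructs the diagram of Figure~\ref{fig:inclusionduality} and observes the duality under Proposition~\ref{prop:trisection}---whereas you spell out the mechanism arrow by arrow and are explicit about the informality of ``any construction,'' but the underlying idea is the same.
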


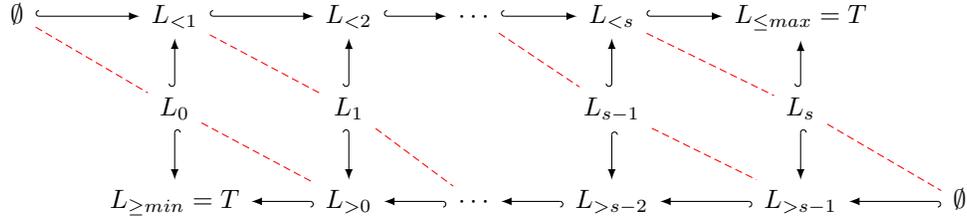
\begin{figure}
\centering
\begin{tikzcd}
\emptyset \arrow[r, hook] \arrow[rd, no head,red,dashed] & L_{<1} \arrow[r, hook] \arrow[rd, no head,red,dashed]              & L_{<2} \arrow[r, hook]                                   & \cdots \arrow[r, hook] \arrow[rd, no head,red,dashed] & L_{<s} \arrow[r, hook] \arrow[rd, no head,red,dashed]                  & L_{\leq max}=T                                              &                           \\
                                              & L_0 \arrow[u, hook] \arrow[d, hook] \arrow[rd, no head,red,dashed] & L_1 \arrow[u, hook] \arrow[d, hook'] \arrow[rd, no head,red,dashed] &                                            & L_{s-1} \arrow[u, hook] \arrow[d, hook] \arrow[rd, no head,red,dashed] & L_s \arrow[u, hook] \arrow[d, hook] \arrow[rd, no head,red,dashed] &                           \\
                                              & L_{\geq min}=T                                                & L_{>0} \arrow[l, hook]                                 & \cdots \arrow[l, hook]                     & L_{>s-2} \arrow[l, hook]                                    & L_{>s-1} \arrow[l, hook]                                & \emptyset \arrow[l, hook]
\end{tikzcd}
\caption{Duality under set complement of sublevel inclusions and superlevel inclusions preserving the disjoint union of sublevel, superlevel, and level sets. Observe that the sequence is dual under inversion of order (indicated by the red dashed line) due to Proposition \ref{prop:trisection}.
}
\label{fig:inclusionduality}
\end{figure}

\begin{proof}
We construct the inclusion of level sets into a sublevel set for all levels. We construct the same for the superlevel set. We observe that the resulting diagram of Figure \ref{fig:inclusionduality} is dual under Proposition \ref{prop:trisection} (red dashed lines in the diagram.)
\end{proof}

From Figure \ref{fig:inclusionduality}, we also see notions of a global minimum and maximum are dualized. These can be defined with respect to the inclusion sequence given that the first level of the sequence with respect to the order is empty, and the last level of the sequence is the total set. We can now define a global minimum and maximum independent of the choice of order by calling the global minimum as the first inclusion into the empty set, and the global maximum as the case of the last inclusion leading to the total set. This means that the global minimum of the sublevel set corresponds to the global maximum of the superlevel set, and vice versa. This is captured by the following two lemmas and hold by duality also for superlevel sets.



\begin{figure}[bht]
\centering
\includegraphics[width=1.0\textwidth]{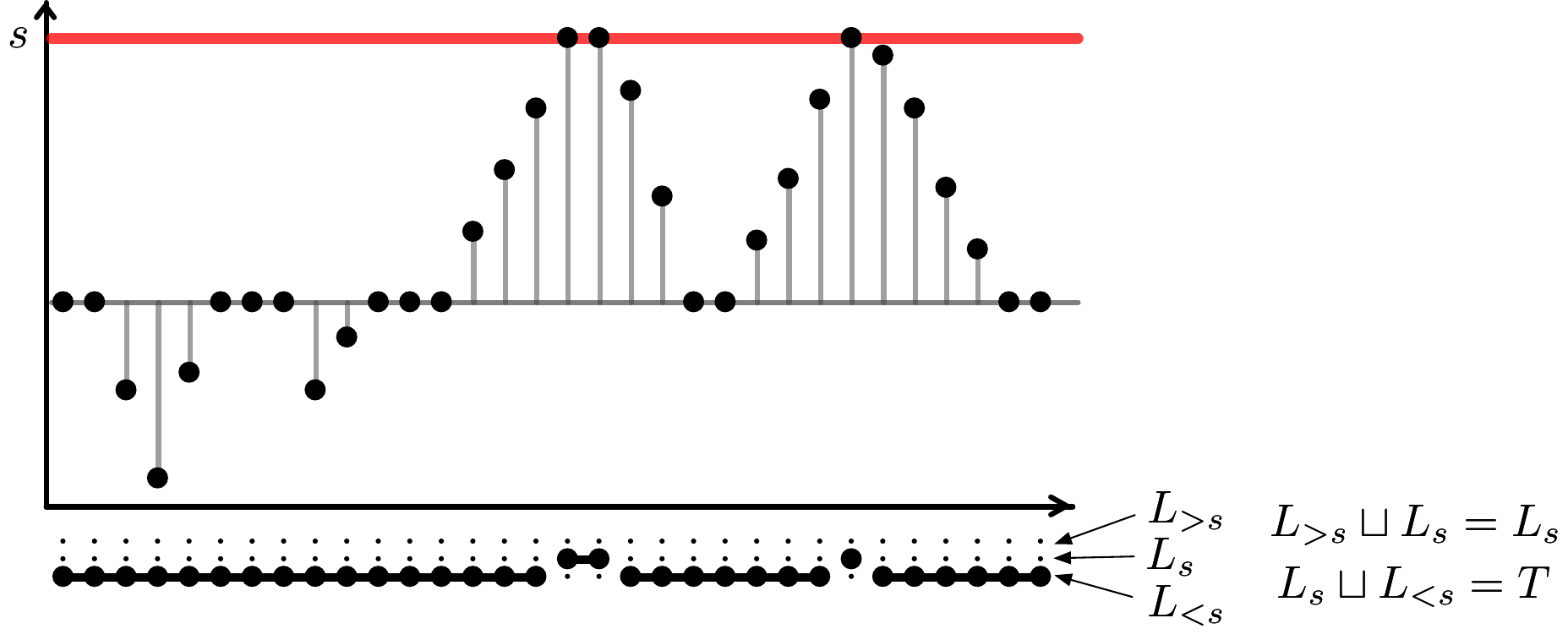}
\caption{Level sets at global extrema at the level with index $s$ (red line). 
Below the graph we see (top) the superlevel set $L_{>s}$, (middle) the level set $L_s$, and (bottom) the sublevel set $L_{<s}$.}\label{fig:globalextrema}   
\end{figure}

\begin{lemma}[Level Sets and Global Minima]\label{lem:globalmin}
A sublevel set has at least one global minimum at level $i$ if and only if $L_{<i+1}\djunion L_i=L_i$, meaning $L_i=L_{<i+1}$. A superlevel set has a global minimum at level $i$ if and only if $L_{>i-1}\djunion L_i=L_i$, meaning $L_i=L_{>i-1}$\marginnote{Review Q: Minor 5}.
\end{lemma}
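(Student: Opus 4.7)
The plan is to prove each equivalence by unfolding the definitions of sublevel and superlevel sets and then observing that the relevant condition is a one-line set-theoretic tautology. No new machinery is needed beyond Definitions \ref{def:structsublevel} and \ref{def:structsuperlevel}, Proposition \ref{prop:trisection}, and the order-inversion duality provided by Theorem \ref{thm:strictsetduality}.

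For the sublevel direction, the first step is to record the identity $L_{<i+1} = L_{<i}\djunion L_i$. This is immediate from Definition \ref{def:structsublevel}: the sublevel set at the next threshold decomposes as the sublevel set below $i$ together with the level set at $i$, and the union is disjoint because distinct levels produce disjoint level sets (a special case of Proposition \ref{prop:trisection} applied at level $i$). Reading the lemma's condition $L_i = L_{<i+1}$ through this decomposition gives $L_{<i} = \emptyset$, which is precisely the statement that no sample attains a level strictly less than $i$. This in turn says exactly that $L_i$ is the first nonempty set in the sublevel inclusion chain displayed in Figure \ref{fig:inclusionduality}, which by the order-independent definition of extrema given just before the lemma makes level $i$ the global minimum of the sublevel filtration. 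The converse direction is the same identity read backwards.

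For the superlevel direction I would invoke Theorem \ref{thm:strictsetduality}. By order inversion, Definition \ref{def:structsuperlevel} yields the dual decomposition $L_{>i-1} = L_i \djunion L_{>i}$, with disjointness again from Proposition \ref{prop:trisection}. The identical one-line argument then gives $L_i = L_{>i-1}$ if and only if $L_{>i} = \emptyset$, which says no sample attains a level strictly greater than $i$. Reading the superlevel chain of Figure \ref{fig:inclusionduality} from right to left (the direction in which inclusions grow the set), this is the first nonempty set of the filtration, i.e., a global minimum of the superlevel filtration in the order-independent sense established immediately before the lemma.

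The main difficulty is not technical but conceptual and notational: one must correctly parse that the ``global minimum of a superlevel set at level $i$'' is a filtration-theoretic notion obtained by reversing the order (the first inclusion out of $\emptyset$ along the filtration), not a statement that $i$ is the smallest attained function value. Once the two disjoint decompositions are in hand, both equivalences are tautologies, and the proof amounts to observing that Proposition \ref{prop:trisection} and Theorem \ref{thm:strictsetduality} let the same argument be applied in both directions with the order swapped.
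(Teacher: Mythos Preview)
Your proposal is correct and follows exactly the line the paper intends. The paper does not actually supply a separate proof for Lemma~\ref{lem:globalmin}; it states the lemma as an immediate consequence of the preceding discussion of the inclusion diagram in Figure~\ref{fig:inclusionduality} and of Proposition~\ref{prop:trisection}, and illustrates it with Figure~\ref{fig:globalextrema}. Your argument makes explicit precisely the set-theoretic identities the paper leaves implicit: the decomposition $L_{<i+1}=L_{<i}\djunion L_i$ (and dually $L_{>i-1}=L_i\djunion L_{>i}$), the equivalence with the vanishing of $L_{<i}$ (resp.\ $L_{>i}$), and the identification of this with the first nonempty set in the filtration. Your care in unpacking the filtration-theoretic meaning of ``global minimum of a superlevel set'' is exactly the point the paper makes in the paragraph preceding the lemma.
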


\begin{lemma}[Level Sets and Global Maxima]\label{lem:globalmax}
A sublevel set has a global maximum at level $i$ if and only if $L_{<{i}}\djunion L_i=T$, meaning $\complement L_{<i}= L_{i}$. A superlevel set has a global maximum at level $i$ if and only if $L_{>i}\djunion L_i=T$, meaning $\complement L_{>i}=L_{i}$.
\end{lemma}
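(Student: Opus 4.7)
My plan is to deduce both biconditionals as direct consequences of the disjoint-union trisection $T = L_{<i} \djunion L_i \djunion L_{>i}$ from Proposition \ref{prop:trisection}, together with the order-reversal duality of Theorem \ref{thm:strictsetduality}. No additional machinery should be required beyond the principle of duality of sets.

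For the sublevel statement, I would start by unpacking the definition of ``global maximum at level $i$'' given just above the lemma: it is the level whose inclusion into the sublevel filtration completes the total set. Reading across the top row of Figure \ref{fig:inclusionduality}, this means that the step $L_{<i} \hookrightarrow L_{<i+1} = T$ exhausts all samples, i.e., $L_{>i} = \emptyset$. Substituting $L_{>i} = \emptyset$ into the trisection immediately yields $L_{<i} \djunion L_i = T$, and since $L_{<i}$ and $L_i$ are disjoint by construction, taking complements in $T$ gives $\complement L_{<i} = L_i$. The converse direction is equally short: if $L_{<i} \djunion L_i = T$, the trisection forces $L_{>i} = \emptyset$, so no samples lie above level $i$ and the characterization holds.

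For the superlevel statement I would invoke Theorem \ref{thm:strictsetduality}. The bottom row of Figure \ref{fig:inclusionduality} exhibits the superlevel filtration as the order-reversed mirror of the sublevel one, and Proposition \ref{prop:trisection} is itself symmetric under the exchange $<\leftrightarrow >$. The entire sublevel argument therefore transports with $L_{<\bullet}$ and $L_{>\bullet}$ interchanged, producing $L_{>i} \djunion L_i = T$ and $\complement L_{>i} = L_i$. The only subtlety I anticipate is interpretive, namely confirming that the convention ``global maximum is the last inclusion into $T$'' applies consistently to both filtration directions, so that the $<\leftrightarrow >$ symmetry of Proposition \ref{prop:trisection} genuinely propagates the sublevel proof to the superlevel one. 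Once that convention is pinned down, the lemma is essentially a one-line corollary of the trisection together with set complementation.
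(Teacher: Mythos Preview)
Your proposal is correct and mirrors exactly the reasoning the paper itself relies on: the lemma is stated without a separate proof, being presented as an immediate consequence of the trisection of Proposition~\ref{prop:trisection} together with the definition of global maximum as ``the last inclusion leading to the total set'' in Figure~\ref{fig:inclusionduality}, with the superlevel statement obtained by the order-reversal duality. You have simply spelled out what the paper leaves implicit, including the interpretive point that the superlevel ``global maximum'' is the level at which the superlevel filtration reaches $T$ (hence $L_{<i}=\emptyset$), which is consistent with the paper's remark that the global minimum of the sublevel set corresponds to the global maximum of the superlevel set.
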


These lemmas tell us that global minima and maxima can be checked by comparing the level set with a single extremal set. The lemmas are illustrated in Figure \ref{fig:globalextrema}. It shows the level line at the global maximum $s$ of a sublevel set and dually at the global minimum of the superlevel set obeys Lemma \ref{lem:globalmax} and Lemma \ref{lem:globalmin}, respectively. Finally, as a consequence of these lemmas we get \deleted{a} that a constant function must both be a global minimum and maximum:

\begin{corollary}[Constant Functions are both \deleted{a} Global Minima and Maxima]\label{corr:constant}
A constant function is both a global minimum and a global maximum as both conditions hold ($L_{<0}=0$ hence $L_0=T$ and $L_{<1}=L_0$ and $\complement L_{<0}=L_{0}$ hence $L_0=T$) and dually for superlevel sets.    
\end{corollary}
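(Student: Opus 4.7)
The plan is to treat the constant function as the degenerate case of the framework where the codomain $L$ has exactly one element, indexed $0$, and then directly verify that this single level simultaneously satisfies the sublevel characterizations of a global minimum from Lemma~\ref{lem:globalmin} and of a global maximum from Lemma~\ref{lem:globalmax}.

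First I would record the three level sets of a constant function: since every sample maps to the unique level $0$, we have $L_0=T$, while $L_{<0}=\emptyset$ and $L_{>0}=\emptyset$ because no level in $L$ is strictly below or above the only level. This collapse of the strict sublevel and superlevel sets to $\emptyset$ is the entire engine of the argument.

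Next I would check the two lemma conditions in turn. For the global minimum condition of Lemma~\ref{lem:globalmin} at $i=0$, we need $L_0 = L_{<1}$; but $L_{<1} = L_{<0}\djunion L_0 = \emptyset \djunion L_0 = L_0$, so this holds. For the global maximum condition of Lemma~\ref{lem:globalmax} at $i=0$, we need $\complement L_{<0} = L_0$, which is immediate from $\complement \emptyset = T = L_0$. Both sublevel characterizations therefore hold at the unique level.

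Finally, rather than repeating the argument for superlevel sets, I would appeal to Theorem~\ref{thm:strictsetduality}: inverting the order on $L$ exchanges sublevel and superlevel sets while leaving both the total space and the single level set unchanged, so the dual (superlevel) versions of both conditions follow for free from Proposition~\ref{prop:trisection}. There is no genuine mathematical obstacle here; the only thing to be careful about is the notational collapse in this degenerate case, ensuring that $L_{<1}$, $\complement L_{<0}$, and $L_0$ are all unambiguously identified with $T$.
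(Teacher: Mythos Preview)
Your proposal is correct and follows essentially the same approach as the paper: the paper's proof is embedded parenthetically in the corollary statement itself, verifying $L_{<0}=\emptyset$ so $L_0=T$, then checking $L_{<1}=L_0$ for the minimum condition and $\complement L_{<0}=L_0$ for the maximum condition, and finally invoking duality for superlevel sets. Your version is simply a more explicit and carefully worded expansion of the same verification, with the only embellishment being the explicit citation of Theorem~\ref{thm:strictsetduality} and Proposition~\ref{prop:trisection} where the paper just writes ``dually.''
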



\subsection{Persistent Homology via Level Set Filtration\added{s} and Within-Level Subfiltration\added{s}}\label{subsec:withinlevelfiltration}

Persistent homology tracks the persistence and changes of homology under some process, such as the successive inclusion of connected pieces. Sequences of inclusions in this context are called a {\em filtration}. In this paper, we are jointly considering two filtrations (sublevel sets and superlevel sets) that are constructed from sequences of inclusions of level sets due to the duality of Figure \ref{fig:inclusionduality}.

We get a filtration (a sequence of inclusions) in the total order of distinct levels and its dual:

\begin{align}\label{eq:levelfiltration1}
0&\xhookrightarrow{} H_\bullet(l_0) \xhookrightarrow{} \cdots \xhookrightarrow{} H_0(l_{M-1})=H_\bullet(\mathbb{I})\\\label{eq:levelfiltration2}
H_\bullet(\mathbb{I})&=H_\bullet(l_0)\xhookleftarrow{} \cdots \xhookleftarrow{}H_\bullet(l_{M-1}) \xhookleftarrow{}  0
\end{align}


This inclusion gives us sublevel set persistence on a finite discrete sequence and dually the same for superlevel set persistence. Notice that the inverse of an inclusion in the sublevel set is the inclusion in the superlevel set.

\subsubsection{Within-level Filtration}

Given that we do not assume genericity, multiple extrema can fall on the same level set. If a minima or interior maxima fall onto the level set, then the homology change by inclusion of that level set will be:

\begin{align}
\beta_0(L_{<n}\djunion L_n)=\beta_0(L_{<n})+\#\text{minima}-\#\text{interior maxima} 
\end{align}

Most interestingly\added{,} if the number of minima and interior maxima is identical in the level set\added{,} then there is no change in homology at the granularity of the level set. An example of this phenomenon is illustrated in Figure \ref{fig:subfiltration}.

\begin{figure}[th]
\centering
\centering
    \begin{subfigure}[t]{.325\textwidth}
    \centering
    \includegraphics[width=0.91\textwidth]{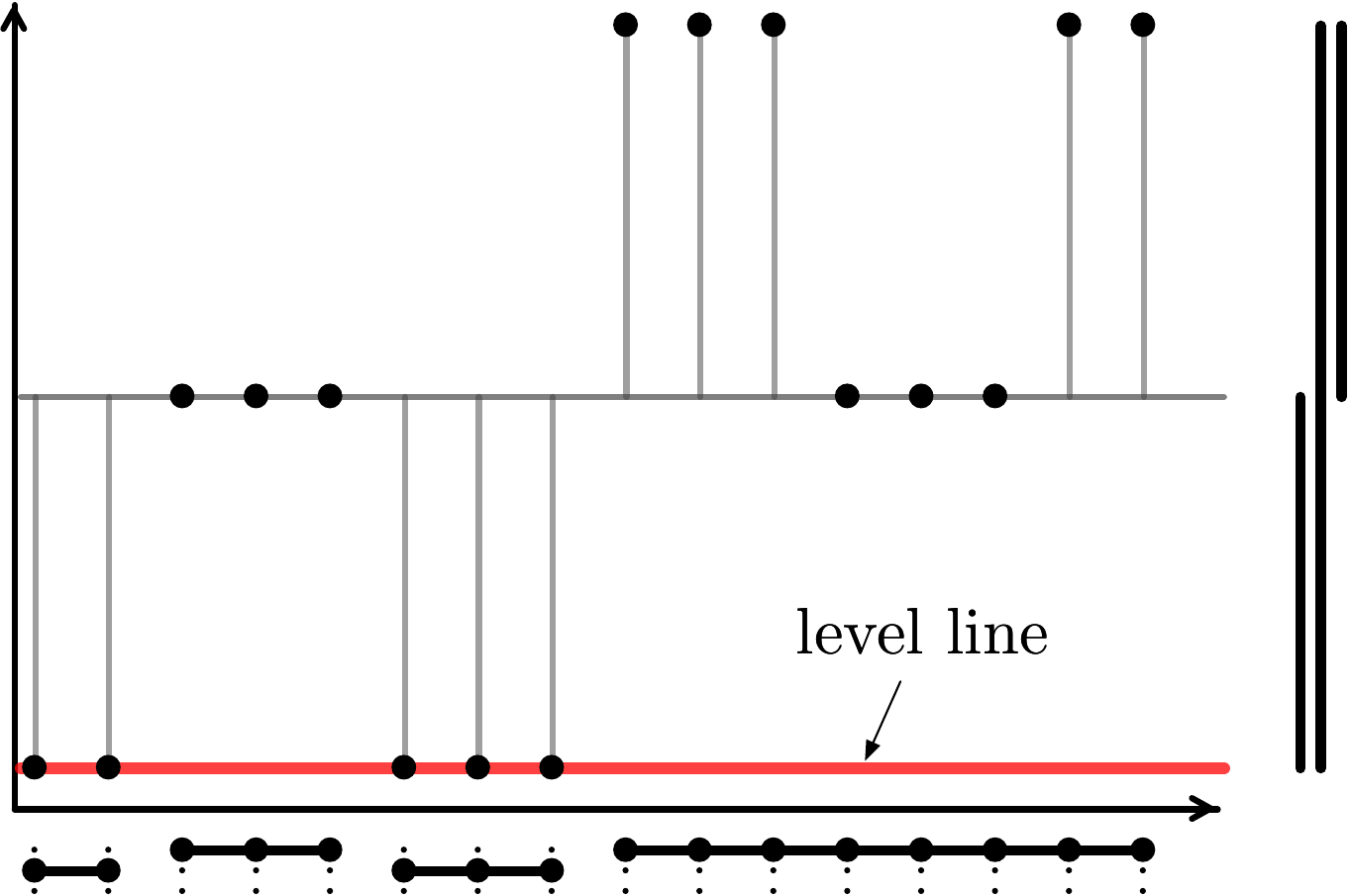}        \caption{}\label{subf:subfiltA}
    \end{subfigure}
    \begin{subfigure}[t]{.325\textwidth}
    \centering
    \includegraphics[width=0.91\textwidth]{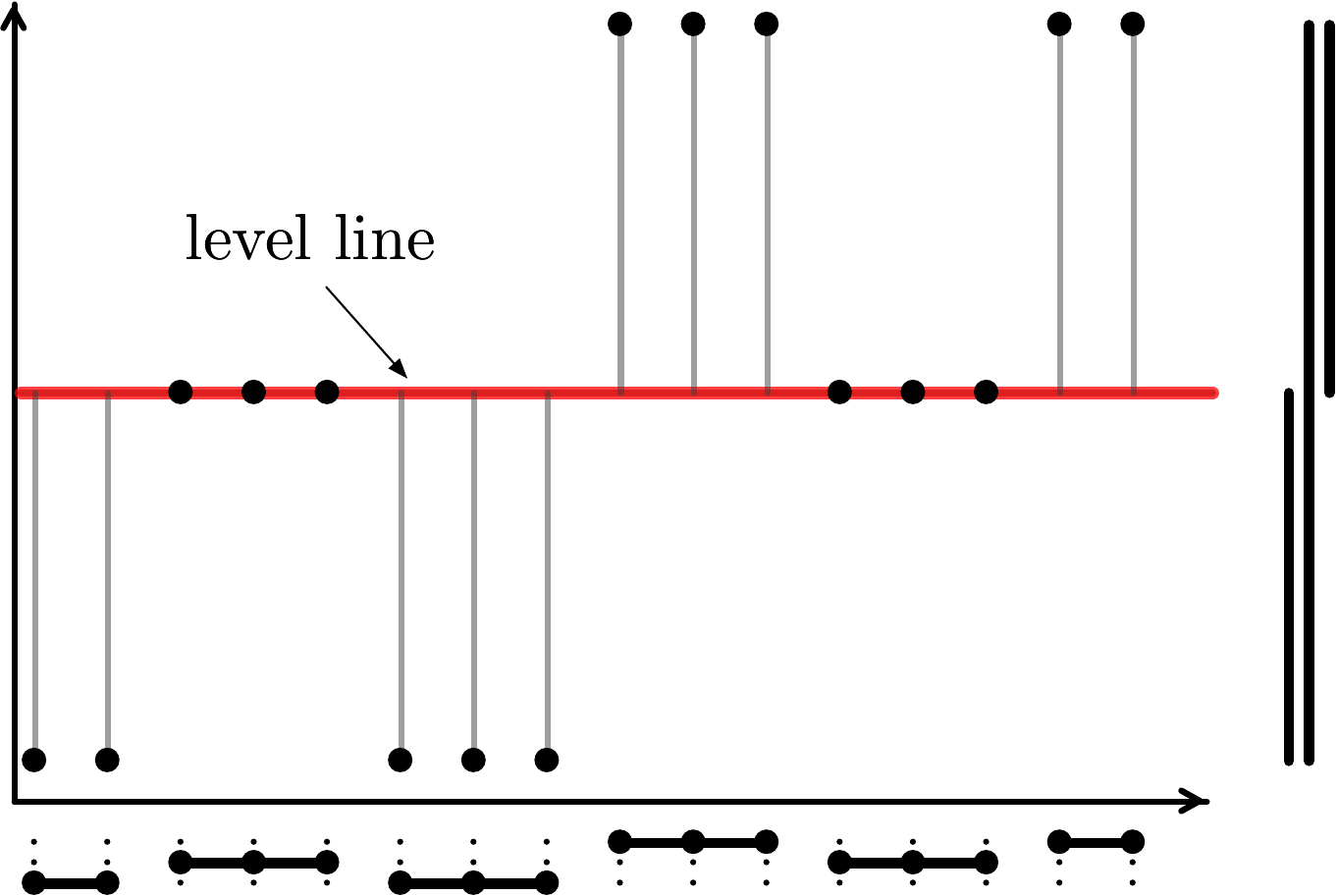}        \caption{}\label{subf:subfiltB}
    \end{subfigure}
    \begin{subfigure}[t]{.325\textwidth}
    \centering
    \includegraphics[width=0.91\textwidth]{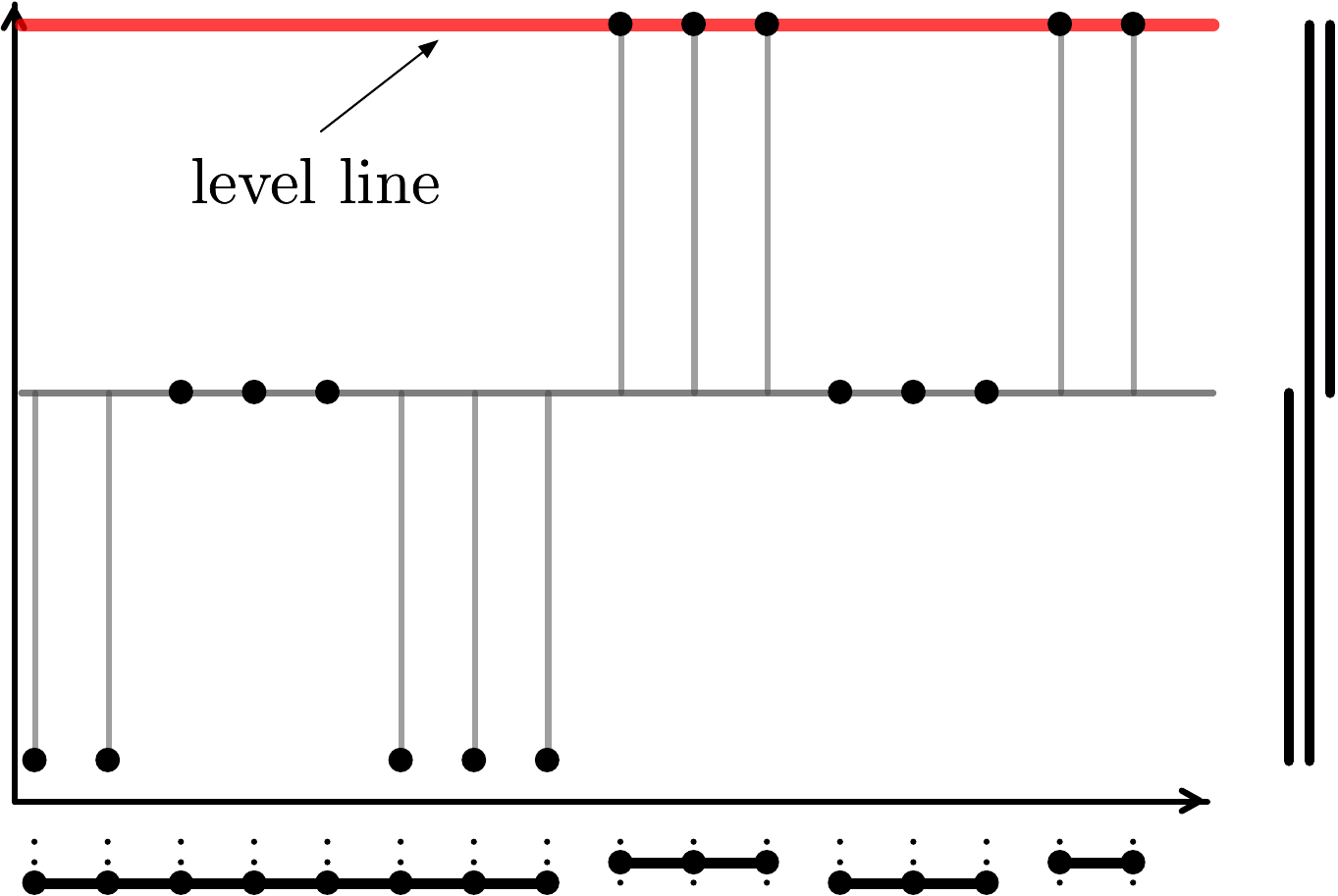}        \caption{}\label{subf:subfiltC}
    \end{subfigure}
\caption{Sublevel set persistent homology where the number of connected components is two at all levels. \subref{subf:subfiltA} Two connected components are created at two minima. \subref{subf:subfiltB} A local maximum merges two connected components while at the same time a local minimum creates a new connected component, keeping the number of connected components at two. \subref{subf:subfiltC} The two connected components persist until the global maximum, where they are merged together.}\label{fig:subfiltration}   
\end{figure}

Hence, in order to capture this information, we require a {\em within-level filtration}. Let $L_n=L_n^0\djunion\ldots\djunion L_n^c$ be the decomposition of a level set $L_n$ into its connected components. Then

$$H_0(L_n^0) \xhookrightarrow {} H_0(L_n^0\djunion L_n^1) \xhookrightarrow {} \cdots \xhookrightarrow {} H_0(\bigdjunion_{i=0}^c L_n^i)=H_0(L_n)$$

is a within-level filtration. If we then refine the level set filtration of Equation~(\ref{eq:levelfiltration1}-\ref{eq:levelfiltration2}) we get a definition of a filtration guaranteed to show changes by local extrema within a level. The order at which within-level extrema are included is arbitrary. \marginnote{Review Y: Minor 7}\added{This should be distinguished from approaches that use tie-breaking rules to resolve non-generic extrema in prior approaches \cite{edelsbrunner1990simulation}. All within-level filtrations are equivalent when it comes to the effect to the construction of sublevel inclusions. This means that all within-level inclusions have to be completed before continuing to the next level.}

\section{Morse-like Behavior}\label{sec:morse}

The recognition that under suitable assumptions, homological changes are consequences of extrema in the level set is core to classical Morse theory \cite{morse1934calculus,milnor1963morse}. This has been adopted in the discrete setting for simplicial complexes by Forman \cite{forman2002user}. Both classical and Forman's discrete Morse theory are often invoked to argue properties of level set persistent homology. Even if Morse theory is not directly invoked, the notion of a homological critical value \cite{cohen2005stability,bubenik2014categorification,govc2016definition} captures the relationship of homological change and critical values in persistence. A key aspect of this work is to analyze the Morse-like behavior of finite discrete functional data.

\begin{figure}[th]
\includegraphics[width=\textwidth]{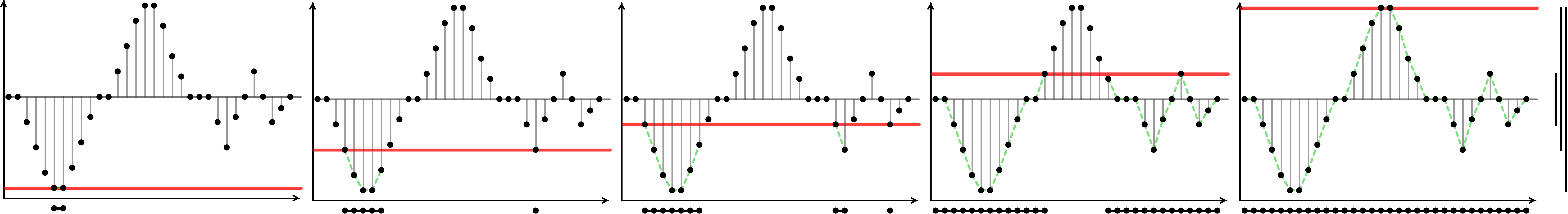}
\caption{Sublevel Set Persistent Homology of a Sequence. For each figure, the sublevel set is illustrated by the line graph for the level highlighted in red. The green dashed line highlights the points that are contributing to the sublevel set. The far right shows the persistence barcode.}\label{fig:sublevelset}   
\end{figure}

In our setting of taking level sets of finite discrete functions, we ask the question: \emph{When is homological information added, and when is it removed?} We observe that flat minima create one connected component. This has the homotopy-type of a point and thus is homotopy equivalent to a classical Morse minimum. We also observe that a flat maximum that is not in the boundary, joins two connected components. This has the same property as a classical Morse maximum. See Figure \ref{fig:sublevelset} for examples of this behavior. Notice that both flat and isolated minima and maxima are present, and that the homological information changes precisely at extrema independent of the flatness of the extrema. We prove this next.

\begin{proposition}[Connected Components are Created at Minima]
\label{prop:min}
Let $x:\mathbb{I}\to L$ be a finite discrete function and consider the sublevel set filtration. A connected component is created at level $l$ if and only if $l$ is the level of a local (flat) minimum.
\end{proposition}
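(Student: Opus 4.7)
The plan is to argue both directions by using the within-level subfiltration of Section~3.9.1 together with the trisection (Proposition~\ref{prop:trisection}) to control what the outer boundary of a connected component of $L_l$ can look like. The key reduction is that, because we refine each level inclusion by its connected components, each change in $\beta_0$ corresponds to the addition of a single flat $F \subseteq L_l$, and whether that addition creates, preserves, or destroys a component is determined entirely by where the outer boundary of $F$ sits relative to $L_{<l}$, $L_l$, and $L_{>l}$.

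For the forward direction, I would take $F$ to be a connected component of $L_l$ that is a local (flat) minimum. By the definition in Section~3.4, the outer boundary of $F$ lies in $L_{>l}$. When $F$ is added in the within-level subfiltration at level $l$, I would verify two things: (i) $F$ is not adjacent to any sample in $L_{<l}$, because every neighbor of a point of $F$ is either in $F$ itself (at level $l$) or on the outer boundary (in $L_{>l}$), and in either case not in $L_{<l}$; and (ii) $F$ is not adjacent to any previously added flat $F' \subseteq L_l$, for if it were, then $F \cup F'$ would be a single connected component of $L_l$, contradicting the choice of $F$ as a connected component. Therefore the inclusion $L_{<l} \cup (\text{previously added flats}) \hookrightarrow L_{<l} \cup (\text{previously added flats}) \cup F$ increases $\beta_0$ by one, which is what is meant by ``a connected component is created at level $l$.''

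For the backward direction, I would suppose that a connected component is created at level $l$, meaning that in the within-level subfiltration there is a flat $F \subseteq L_l$ whose inclusion raises $\beta_0$. Then $F$ must be adjacency-disjoint from $L_{<l}$ and from every other flat of $L_l$; in particular no point of the outer boundary of $F$ lies in $L_{<l}$, and no point of the outer boundary lies in $L_l$ (since being in $L_l$ and adjacent to $F$ would place it in the same connected component as $F$, contradicting the definition of ``outer boundary''). Applying Proposition~\ref{prop:trisection} to each outer boundary point, the only remaining possibility in the circular case $\mathbb{I}_N$ is that both outer boundary points lie in $L_{>l}$, so $F$ is a local (flat) minimum. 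In the linear case $\mathbb{I}_{[N]}$ I would also allow an outer boundary point to fall outside the domain; the convention established in Section~3.4 classifies such an $F$ as a boundary extremum, and the same classification based on the remaining interior boundary point (necessarily in $L_{>l}$) identifies it as a minimum, with the degenerate subcase of $F = \mathbb{I}_{[N]}$ handled by Corollary~\ref{corr:constant}.

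The main obstacle I expect is bookkeeping rather than conceptual: making sure the ``outer boundary in $L_l$'' possibility is correctly ruled out (it is, precisely because $F$ was chosen as a connected component of $L_l$), and making sure the linear-domain boundary subcases are aligned with the classification in Section~3.4 so that ``boundary minima'' are genuinely counted as minima in the statement. Once these are in place, both directions collapse to a direct application of Proposition~\ref{prop:trisection} to the outer boundary of a single connected component of $L_l$.
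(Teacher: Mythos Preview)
Your proposal is correct and takes essentially the same approach as the paper. The paper argues more concretely with explicit indices $m,\dots,n$ for the flat and checks directly that $x[m-1]>l$ and $x[n+1]>l$, whereas you phrase the same two-step exclusion (outer boundary not in $L_{<l}$, not in $L_l$, hence in $L_{>l}$) via Proposition~\ref{prop:trisection} and the within-level subfiltration; the underlying logic and the handling of the boundary/linear-domain subcase are identical.
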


\begin{proof}
    We first assume a connected component is created (or born) at a level of $l$ and show that $l$ is the level of a local minimum. Since a connected component is created at $l$, there must exist a subsequence $m, m+1, \dots, n-1, n$ such that: 
    \begin{itemize}
        \item $x[m]=x[m+1]=\dots =x[n-1]=x[n]=l$
        \item neither $m-1$ nor $n+1$ (if they exist) are contained in $L_{< l}$.
    \end{itemize}  
    Note that if $m-1$ and/or $n+1$ were in $L_{< l}$, then $m,m+1, \dots, n-1, n$ would add to the connected component that includes $m-1$ and/or $n+1$ at the level $l$. Therefore, $x[m-1]>l$ and $x[n+1]>l$. This shows that $x$ has a local (flat) minimum at $m, m+1, \dots, n-1, n$. Observe this result still holds if either $x[m-1]$ and/or $x[n+1]$ is not in the sample.

    Next we assume $l$ is the level of a local (flat) minimum and show that a connected component is created (or born) at a level of $l$. Suppose $m, m+1, \dots, n-1, n$ is the subsequence of the local flat minimum and let $l$ be the level of the minimum, so $x[m]=\dots =x[n]=l$. Since no value is less than $l$ for $m, m+1, \dots, n-1, n$, no connected component contains $m, m+1, \dots, n-1, n$. Furthermore if $x[m-1]$ or $x[n+1]$ exist, their values are strictly larger than $l$ by definition of a local minimum. Assuming $x[m-1]$ and $x[n+1]$ are in the sample, we find that no connected components contain $m-1, m, \dots, n, n+1$ for the strict sublevel set $L_{<l}$. At the level set $L_l$, note that the flat minimum $m, \dots, n$ is in the level set but $m-1$ and $n+1$ are not. Therefore $m, \dots, n$ is a connected component in the sublevel set $L_{\leq l}$, but is not in the sublevel set $L_{<l}$. This shows a local flat minimum creates a connected component. Observe that this result still holds if either $x[m-1]$ and/or $x[n+1]$ is not in the sample.

\end{proof}

Note that the proof only uses set properties and a total order on the level. Incidentally, the complementary proof for local flat maxima is essentially the same, except that we necessarily need existing entries in at $m-1$ and $n+1$ so that the local flat connects two connected components together, i.e., merges two connected components.

\begin{proposition}[Connected Components Merge at Maxima]
\label{prop:max}
    Let $x:\mathbb{I}\to L$ be a finite discrete function. Two connected components merge together at a level $l$, if and only if $l$ is the level of a local (flat) maximum $x[m], x[m+1] \dots, x[n]$ where $m\neq 0$ and $n\neq N-1$.
\end{proposition}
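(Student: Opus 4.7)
The plan is to mirror the proof of Proposition~\ref{prop:min} almost verbatim, using only set properties and the total order on levels, while being careful about the role of the hypothesis $m\neq 0$ and $n\neq N-1$. That hypothesis ensures both outer boundary samples $m-1$ and $n+1$ of the flat actually exist, so that two distinct components can be bridged rather than a single component being extended by a boundary flat. The underlying duality of Proposition~\ref{prop:trisection} together with the maxima/minima characterization via outer boundaries already established guides the whole argument.

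For the forward direction I assume that two connected components of $L_{<l}$ become a single component upon inclusion of $L_l$ into $L_{\leq l}=L_{<l}\djunion L_l$. The bridging must be accomplished by a maximal flat $x[m]=\cdots=x[n]=l$ whose outer boundary samples $m-1$ and $n+1$ lie in two previously distinct components $C_1,C_2\subset L_{<l}$. The very existence of two components to merge forces both outer boundary samples to be present, yielding $m\neq 0$ and $n\neq N-1$, and to satisfy $x[m-1]<l$, $x[n+1]<l$. Combined with the maximality condition $x[m-1]\neq l$ and $x[n+1]\neq l$ (needed for the flat to be a flat in our sense), the outer boundary of the flat lies entirely in $L_{<l}$, which is exactly our defining condition for a local flat maximum.

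For the reverse direction, let $x[m],\ldots,x[n]$ be a local flat maximum with $m\neq 0$ and $n\neq N-1$. By definition $x[m-1]<l$ and $x[n+1]<l$, so $m-1,n+1 \in L_{<l}$ while $m,\ldots,n \notin L_{<l}$. In the linear domain, any successor/predecessor chain in $L_{<l}$ connecting $m-1$ to $n+1$ would have to traverse $\{m,\ldots,n\}$ and is therefore blocked, so $m-1$ and $n+1$ lie in distinct components of $L_{<l}$. Including $L_l$ turns the full sequence $m-1,m,\ldots,n,n+1$ into a single connected subset of $L_{\leq l}$, merging these two components. The main subtlety, which I would flag explicitly rather than grind through, is the circular-domain case: a chain may wrap around through the complementary arc if no sample there is below $l$, in which case including the flat does not merge two components but instead kills a $1$-cycle. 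This is consistent with the stated boundary hypothesis being vacuous on $\mathbb{I}_N$ and with the $H_1$ behavior already captured by Definition~\ref{def:homology1} and the duality diagram of Figure~\ref{fig:inclusionduality}; the proposition should therefore be read as a $\beta_0$-statement, to be complemented by the cycle-creation/destruction discussion in the duality section.
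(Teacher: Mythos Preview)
Your approach is essentially the paper's own: the paper does not give a separate proof of Proposition~\ref{prop:max} but simply remarks that ``the complementary proof for local flat maxima is essentially the same [as Proposition~\ref{prop:min}], except that we necessarily need existing entries in at $m-1$ and $n+1$ so that the local flat connects two connected components together.'' Your write-up is a faithful expansion of exactly that sketch, with the outer-boundary classification of flats doing the work in both directions.

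Where you go beyond the paper is in explicitly flagging the circular-domain subtlety: when the flat at level $l$ is the unique global maximum on $\mathbb{I}_N$, the complementary arc already lies entirely in $L_{<l}$, so $m-1$ and $n+1$ are connected through that arc and inclusion of $L_l$ creates a $1$-cycle rather than merging two $\beta_0$-components. The paper's one-line argument does not address this, and the hypothesis $m\neq 0$, $n\neq N-1$ is phrased for the linear boundary case; your reading of the proposition as a $\beta_0$-statement, with the cycle phenomenon deferred to the $H_1$/duality discussion, is the right way to reconcile this and is more careful than what the paper actually writes down.
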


In general, minima and maxima play this complementary role for reasons of a duality of sublevel set persistence and superlevel set persistence. We discuss this in greater detail next.

\begin{remark}
Notice that Proposition \ref{prop:max} requires two connected components to merge. A maximum at the boundary of the domain has no connected component to merge as there is nothing to merge with outside the domain. This means that maxima at the boundary do not contribute to the homology.
\end{remark}

Minima do not show this property. They always create a connected component, and hence contribute to homology. This leads to an asymmetry between minima and maxima for the linear domain which has boundaries.

\section{Barcodes and their Construction Rule}\label{sec:barcode}


When a level set containing local maxima is included in a sublevel set, two or more connected components are merged into one. This means there is one connected component that continues, but the remaining components are now subsumed into that surviving connected component. One can think of this as one connected component ``surviving" the merge event, while one or more have ``died".  This will decrease the number of connected components. However, the attribution of which one gets to be designated as survivor or not is arbitrary. 

We will call the rule that decides which connected component is designated to continue, the {\em bar construction rule}. By far, the most frequently used bar construction rule is called the ``elder rule" \cite{edelsbrunner2010computational} or ``youngest first" \cite{dey2022computational} depending if one wants to emphasize a metaphor of survival or not. We use the ``elder rule" nomenclature. \added{The basic idea of the elder rule is that the ``oldest" meaning the earliest created homological class survives when merged. In our setting, this would refer to the lowest minimum participating in a merge.}
If the data at a level set are generic, that is, no two \marginnote{Review Y: Minor 8}\replaced{minima}{maxima} can fall onto the same level, then the elder rules gives a unique bar construction rule \cite{edelsbrunner2010computational,baryshnikov2024time,curry2018fiber}, and this is the typical context in which this rule is encountered.

Our setting is emphatically not generic, so even if we try to retain the spirit of the elder rule, we should in general expect the barcode to be non-unique \cite{curry2018fiber}. For example, if three minima at the same level are merged at two maxima between them which are at the same level, three connected components are merged into one and we have three identical candidates to survive the merge. Given that there is no distinguishing information, any selection of survivor is, at least with respect to level, arbitrary. Morally we could call this case of the elder rule a ``random tribal council elder", indicating that of numerous ``equally old" options, one arbitrary elder is selected to carry on. Other elder selections can be envisioned in this case such as using the order in the sequence, such as first elder or last elder. We will not pursue details of these choices in this paper, and will invoke an arbitrary rule (such as left-most elder).


\subsubsection{Finite Essential Barcode over Finite Level Sets}

Often a filtration is constructed from some metric information (compare for example \cite{cohen2005stability}). In the case of sublevel set persistence, this would be a height function defined over $\mathbb{R}$. For any finite height function, there is a global maximum. This means there is a moment where the whole function is included in the sublevel set, and we have one final connected component. But given that the height function is defined over $\mathbb{R}$, one can imagine continuation without change of the homological information. This is usually captured by an infinite-length barcode $[b,\infty)$ where $b$ is the birth event. We do not use a height function over $\mathbb{R}$ in our work. All bars are born (and die) in the closure of all non-empty levels. Hence by construction, we do not have infinite bars. However, we have a direct association between an infinite bar by recognizing that the connected component of the total sequence plays the identical role. This bar is sometimes called {\em essential} \cite{dey2022computational}. As a consequence, our barcodes are naturally bounded by the closure of the level set, and thus gives a natural answer for truncating infinite bars for practical display or processing that is present in the case of height functions over $\mathbb{R}$.

\subsection{Barcode Construction for Circular Domains}

On circular domains, persistent homology of sublevel and superlevel sets are dual, meaning one contains the same content as the other. A related property of interest is the fact that the number of minima and maxima are guaranteed to be the same. Hence for every minimum there is a maximum to create a bar in a barcode. Furthermore, for every pair of minima and maxima (henceforth {\em pair of opposite extrema}) there are two paths that connect them. This means every constructed barcode always has two interpretations: going either left or right in order to have the minimum and maximum be a part of the same bar. For every bar in the sublevel set, one has an identical bar in the superlevel set with births and deaths reversed. This suggests that in the circular case, the denotation of birth and death is merely a consequence of having picked one orientation over a dual one, and hence has no intrinsic meaning. An example of this duality is shown in Figure \ref{fig:circularduality} for a local barcode construction rule that picks the local minimum to the neighboring local maximum on the left and right.

\begin{figure}[ht]
\centering
    \begin{subfigure}[t]{.49\textwidth}
    \centering
    \includegraphics[width=0.91\textwidth]{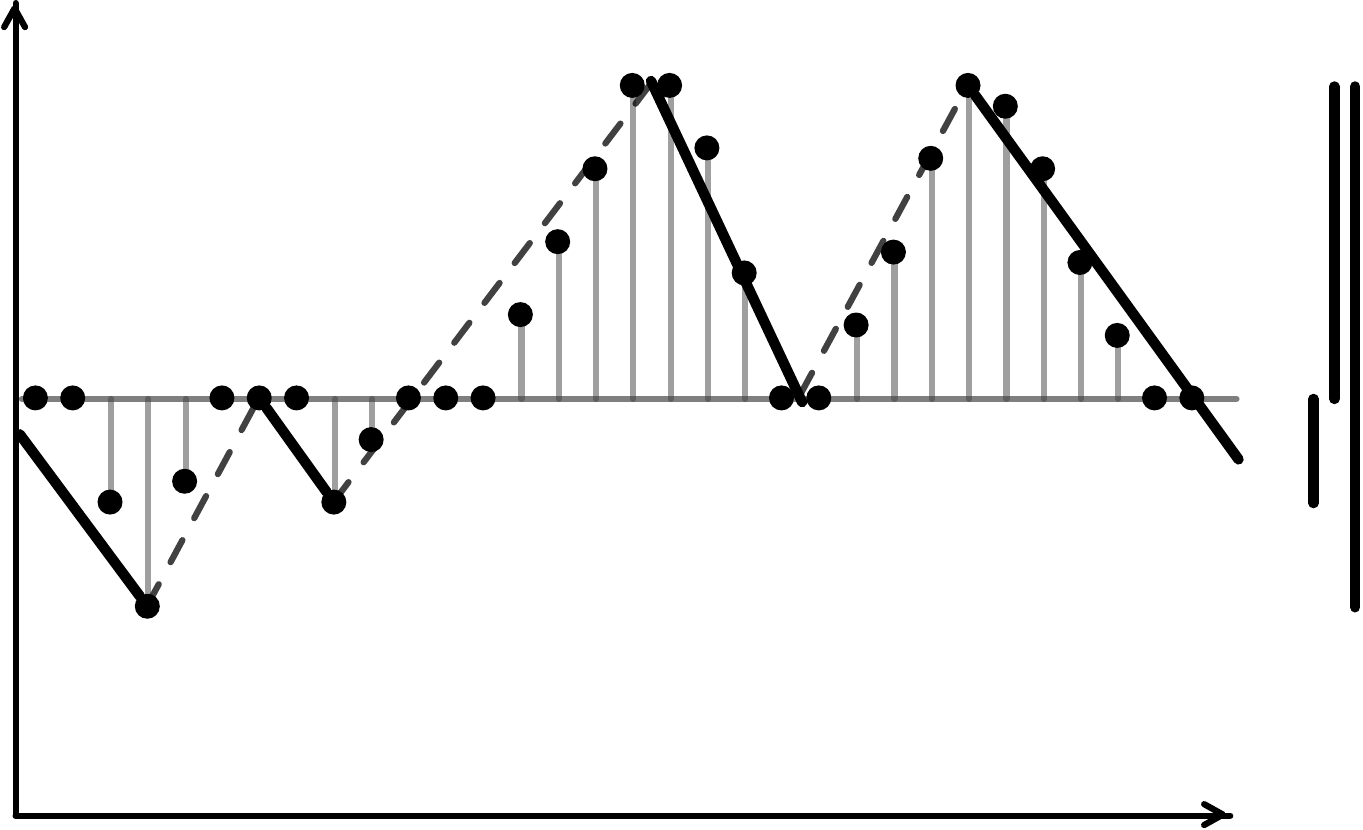}        \caption{}\label{subf:circdualA}
    \end{subfigure}
    \begin{subfigure}[t]{.49\textwidth}
    \centering
    \includegraphics[width=0.91\textwidth]{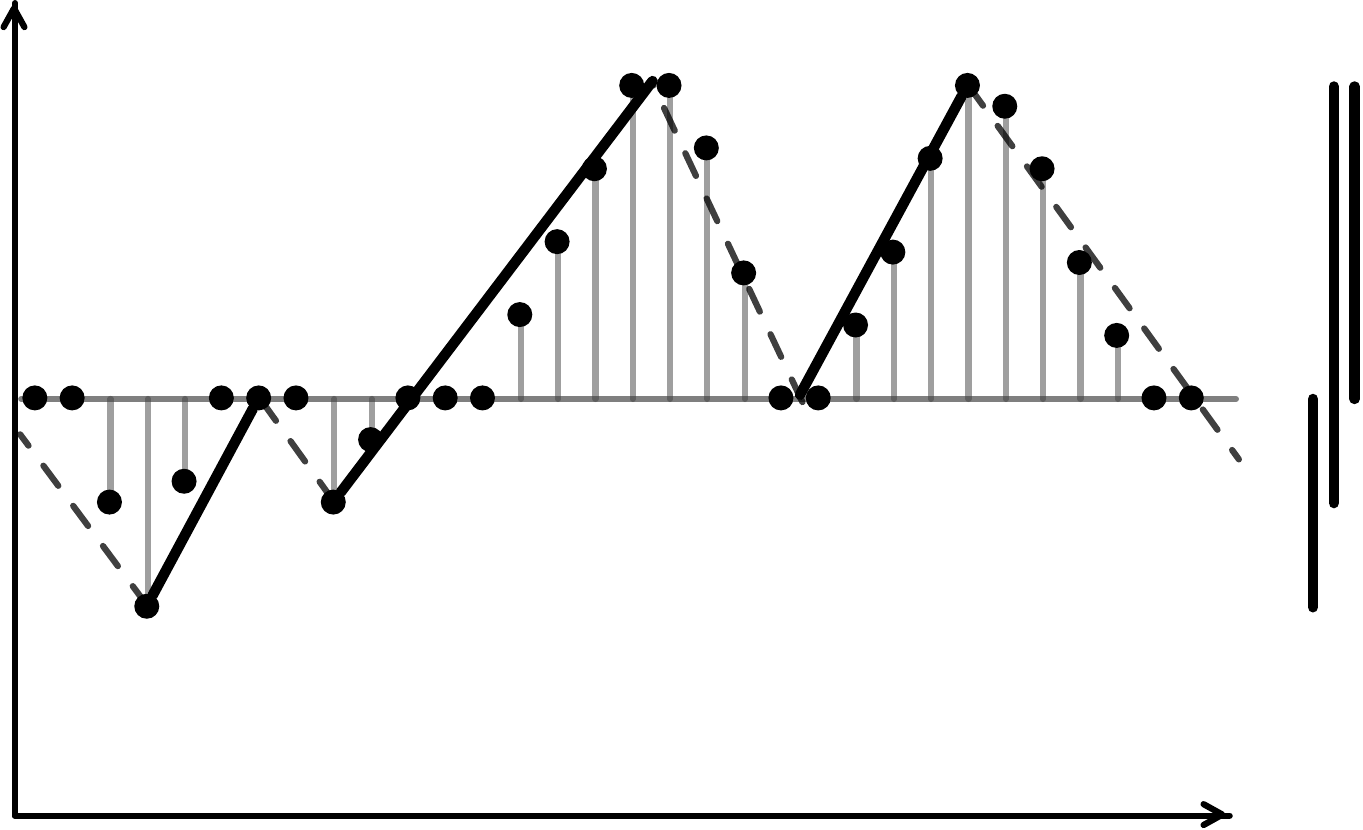}        \caption{}\label{subf:circdualB}
    \end{subfigure}
\caption{Dual barcodes on a cirular domain based on local barcode construction rules. \subref{subf:circdualA} Barcode represents the left neighboring maximum of a local minimum and \subref{subf:circdualB} barcode represents the right neighboring maximum of a local minimum. Observe that if you look at the barcodes as those of superlevel sets, their roles are reversed.}\label{fig:circularduality}
\end{figure}  

\subsection{Barcode Construction Rule for Linear Domains}


In this section, we introduce motivations for choosing different barcode construction rules for linear domains. We believe that the choice of barcode construction is application dependent. Different barcode rules create differing outputs which in turn serve different needs of interpretation in applications.

The linear domain is more complicated than circular domains due to effects at the boundary. To better help us illustrate this more complicated setting we will use merge trees. Merge trees are a richer structure that captures the homological behavior due to minima and maxima \cite{carr2003computing,curry2018fiber,curry2022decorated,dey2022computational}. One can view the difference between merge trees and barcodes as a reduction in information due to a rule that the homological contributor survived at a merge.

\begin{definition}[Merge tree]
Given a finite discrete sequence of a totally ordered set $X=x[0,\dots, N-1]$, the \emph{merge tree} of $X$ is a rooted tree with decorated vertices called \emph{nodes}. The \emph{nodes} $N:=\{n_c, c\in \mathbb{N}, i\in\mathbb{I}, x[i]\in L\}$ are decorated by the index $i$ and the level $x[i]$ of the extrema of $X$ which change the homology under a chosen filtration, and are connected to zero, or more other nodes $n_c$ which we will call \emph{children}.
The root of the tree $r\in N$ is the only node in a merge tree that is not a child of another node.
\end{definition}

 
Any node without children is a \emph{leaf} and \emph{interior nodes} are nodes that have children. The leaves of a merge tree are in one-to-one correspondence to minima. Interior nodes of merge trees correspond to levels at which connected components merge. 
Given that merge positions can be ambiguous due to the possibility of multiple maxima at the same level and hence merging into the same node, we use the first maximum that merged connected components of this node in all our figures. Notice that even though the index $i$ in the annotation is ambiguous due to the possibility of a flat region, $x[i]$ for any choice of $i$ surjects onto one unique level.

Given that connected components only merge at maxima, nodes correspond to one or more maxima at the same level that all connected the same merged connected component. Given that maxima are allowed at the same level, this merge tree is not guaranteed to be binary. With this information, it is easy to visualize the correspondence of the data and the merge tree by overlaying the merge tree at its decorated positions on the data. The nodes are associated with extrema that served the captured homological function. Clearly, there is a bijection between homological changes at each level and nodes in the merge tree, and without the decoration, there is a one-to-one correspondence if extrema are isolated. Otherwise, there is a surjection if multiple connected components are merged at the same level due to maxima that share that level.

Our definition has some relation to an ordered merge tree \cite{curry2018fiber} which decorates edges with a total order. By comparison, we decorate nodes; however, the total order of nodes induces a total order on edges. Hence, ordered merge trees are contained in our definition.

The barcode construction rule can be viewed as picking one branch at each interior node of the merge tree. The birth of each bar is a leaf of the merge tree. The death of a bar is either not being picked as the surviving branch at an interior node, or having reached the global maximum. We allow multiple maxima at the same level. This means that multiple connected components can simultaneously merge at a given level. A single maximum that is not at th3e boundary will always merge two connected \marginnote{Review Q: Minor 6}component\added{s}. Each maximum at the same level that is not separated by a higher maximum will also merge in one additional connected component. Finally, \marginnote{Review Q: Minor 7}\deleted{a} the root of the merge tree merges all connected components represented by branches entering it, hence each branch must be part of a merge tree.

\begin{figure}[ht]
\centering
    \begin{subfigure}[t]{.49\textwidth}
    \centering
    \includegraphics[width=0.91\textwidth]{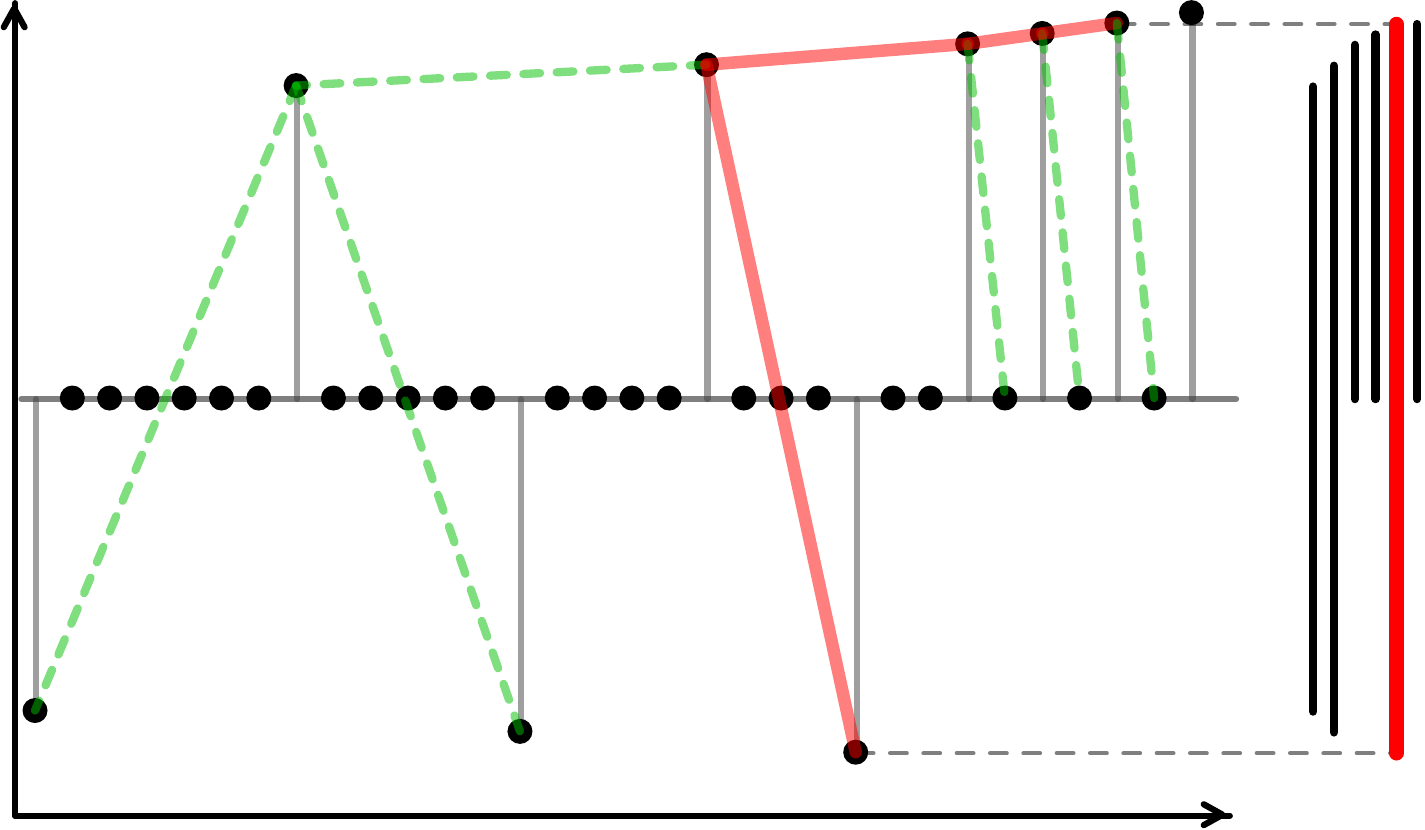}        \caption{}\label{subf:elderlocalA}
    \end{subfigure}
    \begin{subfigure}[t]{.49\textwidth}
    \centering
    \includegraphics[width=0.91\textwidth]{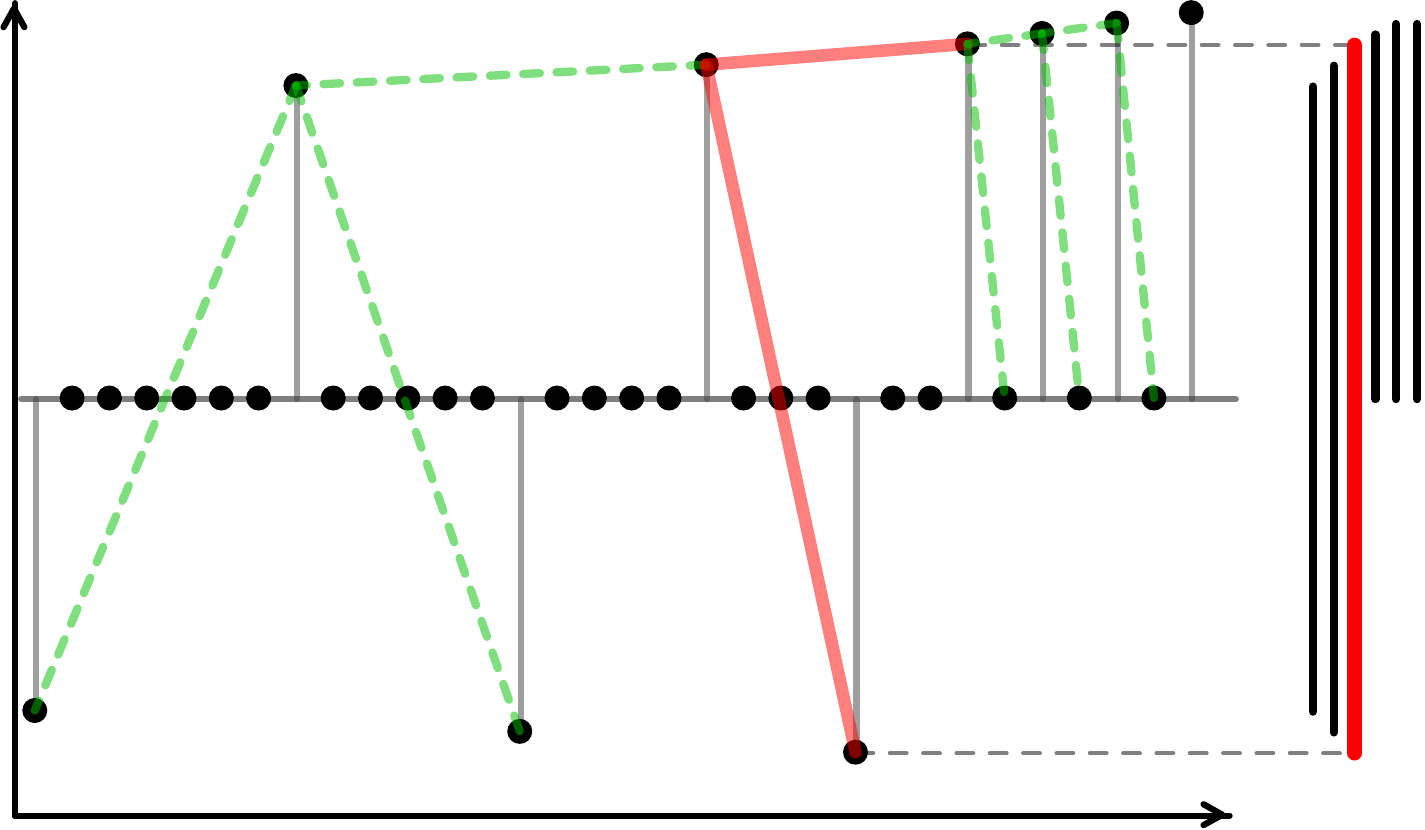}        \caption{}\label{subf:elderlocalB}
    \end{subfigure}
\caption{Rules of barcode construction. \subref{subf:elderlocalA} The elder rule connects a global minimum with a global maximum, which in this example is non-local. \subref{subf:elderlocalB} A local rule connects neighboring extrema. Which neighbor is dictated by the direction to the global maxima. In this example, all but one bar connect to the right neighbor. The sole left neighbor is the minima on the right of the global maximum. Notice that the boundary maximum is not considered as it does not contribute to homology.}\label{fig:globalmineldervslocal}
\end{figure}

The classical {\em elder rule (with tribal council resolution of ambiguity)} says that the branch at a node of a merge tree survives that contains the lowest minimum. \marginnote{Review Q: Minor 8}If multiple branches have \replaced{equally-}{equually }\marginnote{Review Y: Minor 9}low minima the tie is broken by some tribal council rule (we use left-most survives).

Contrast this to a {\em local rule}, which picks a (non-boundary) neighboring maximum for any minimum. \marginnote{Review Q: Minor 9}\replaced{The global interior maxima play a special role. For simplicity, let us first assume that there is just one unique interior global maximum. The left and right connected components merge at that maximum. Both sides lead to barcodes to the global maximum. One is due to it being essential, the other is due to a bar that is non-essential but connecting at this level. So the global maximum fixes that all minima to the left of the global maximum have to connect to the right neighboring maximum, and all minima to the right of the global maximum have to connect to the left of the neighboring maximum. Now if we allow multiple global interior maxima, we have to pick one of them to capture the essential bar. After that the same rule applies, and the remaining global interior maxima are just treated as interior maxima.}{The global maxima dictate the direction of the neighbor that is picked. If a local maximum is to the left of the global maximum, then a local minimum connects to the local maximum to its right, and for local maxima on the right, local minima connect to local maxima to their left.}

Figure \ref{fig:globalmineldervslocal} shows a simple example of the difference of these two rules with regards to a global minimum. The elder rule connects it to the global maximum which is not a neighbor of the minimum (left) while the local rule connects it to a local neighboring maximum according to the side from the global maximum it is on (right).

\begin{figure}[th]
\centering
    \begin{subfigure}[t]{.49\textwidth}
    \centering
    \includegraphics[width=0.91\textwidth]{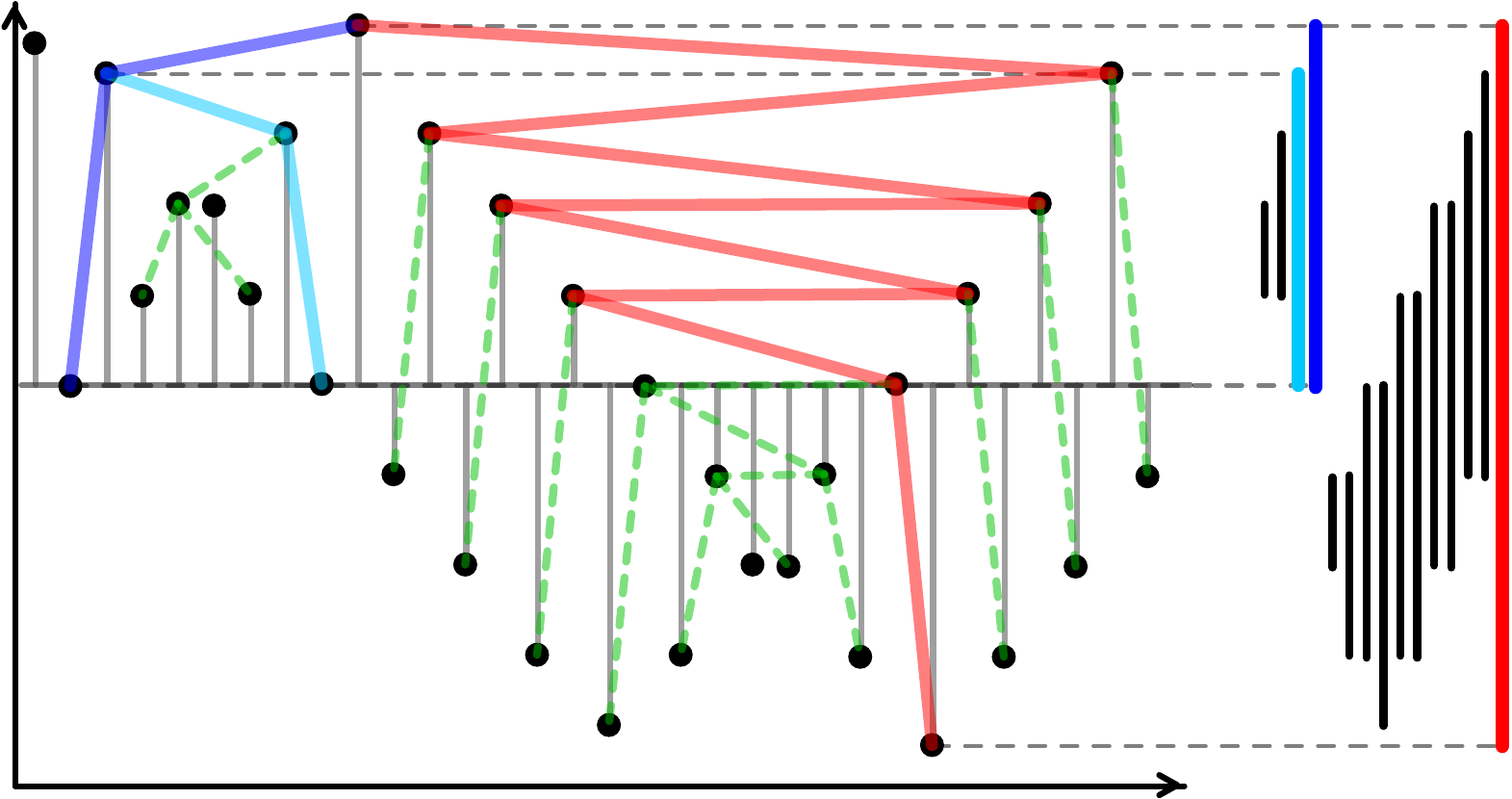}        \caption{}\label{subf:mtelderlocalA}
    \end{subfigure}
    \begin{subfigure}[t]{.49\textwidth}
    \centering
    \includegraphics[width=0.91\textwidth]{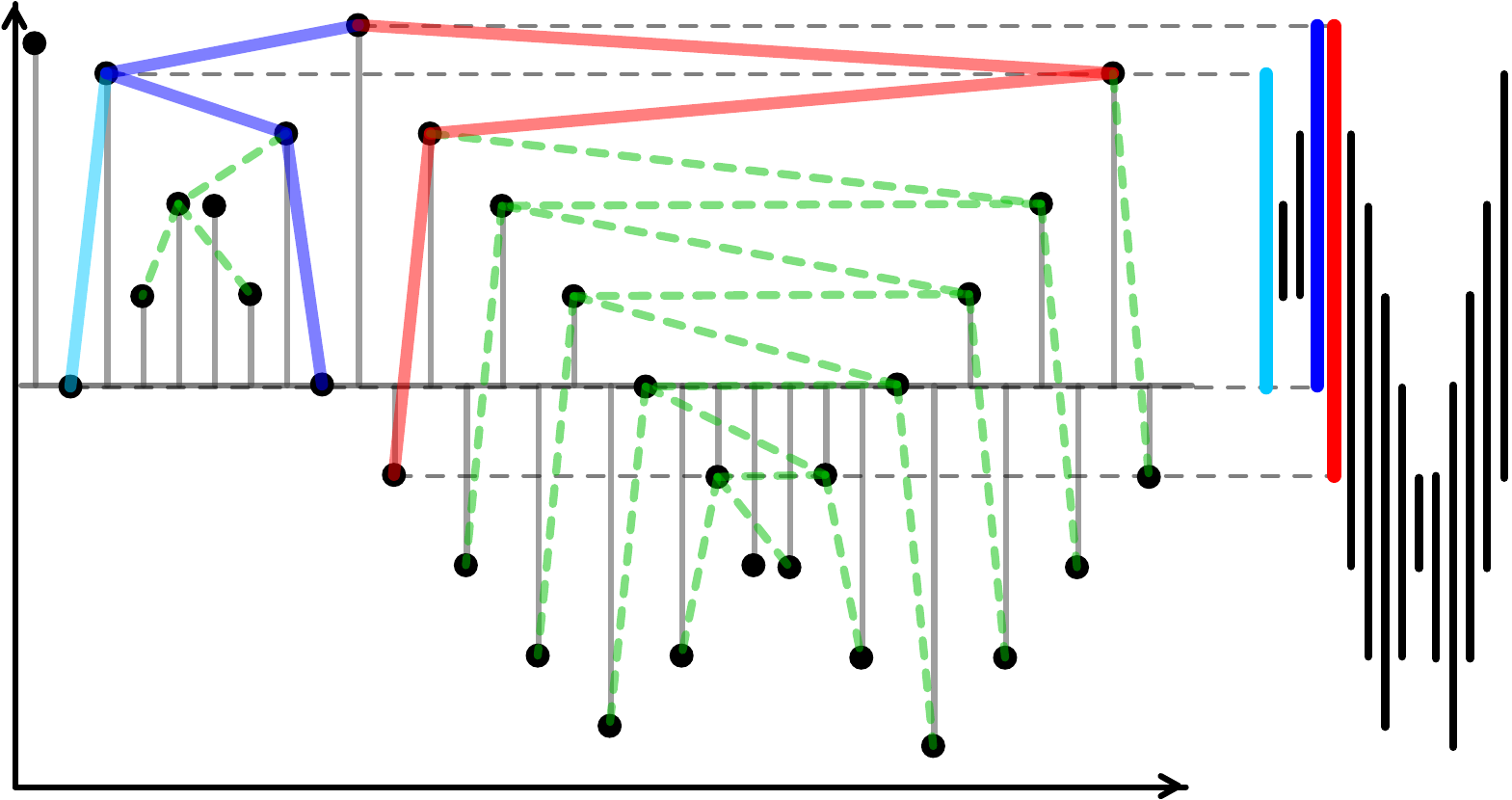}        \caption{}\label{subf:mtelderlocalB}
    \end{subfigure}
\caption{Comparison of elder versus local barcode rules based on bars including global maxima. \subref{subf:mtelderlocalA} The elder rule demands that a global minimum is connected to a global maximum (red solid). Notice that the left side from the global maximum does not have a unique lowest minimum hence we have an ambiguity on what to pick for the bar (blue solid or cyan solid). In this example, the leftmost local elder is picked from the elder council. \subref{subf:mtelderlocalB} The local rule states that a bar starting from a local minimum is connected to a neighboring local maximum. The global maximum will dictate which neighbor. There is no longer an ambiguity between blue and cyan branches as the rule uniquely distinguishes them. The red and blue branches connect to the global maximum. However, either one could be considered ``essential", hence there is a global ambiguity in the rule.}\label{fig:mergetreeelderlocal}   
\end{figure}

In Figure \ref{fig:mergetreeelderlocal}, we show a more complicated example \replaced{of}{on}\marginnote{Review Q: Minor 10} how bars are picked from branches of merge trees. Hence the maximum in the left boundary is ignored as it does not contribute to homology. Minima form leaves of the tree, while maxima form \added{interior}\marginnote{Review Y: 10} nodes of the tree. Global maxima form the root of the tree. If multiple maxima merge at the same level, they form the same node, hence the tree does not necessarily need to be binary. However, every leaf needs to be connected and every node needs to be an endpoint.

An example of the elder rule is \deleted{realized} shown on the left \replaced{in}{of}\marginnote{Review Q: Minor 11} Figure \ref{fig:mergetreeelderlocal}. The red path in the elder case connects the global minimum with the global maximum. At each branch, it survives against other branches until the maximum. There is another connected component on the right side of the global maximum that connects to the global maximum (blue). Notice, however, that both the blue and cyan paths share the same minimum. Hence, the traditional elder rule does not provide sufficient information to pick a survivor. By tribal council rule, the left-most elder (blue) survives.

The local rule is shown on the right in Figure \ref{fig:mergetreeelderlocal}. In contrast to the elder rule, the local rule does not take into account the level of minima, but instead \marginnote{Review Q: Minor 12}\replaced{always constructs bars from immediate opposite extremas}{maintains the local neighborhood}. We see that \added{the} two bars connect to the global maximum in this case (red and blue) are indeed connecting minima neighboring the global maximum. The blue path is uniquely defined by the rule, as it is now the cyan path, which by the rule connects to the maximum immediately to its right. Hence, in the case of the local barcode rule, there is no ambiguity between cyan and blue. However, which of the two paths that connect to the global maximum (blue and red) is considered essential and is not resolved by the rule. Hence, there is an ambiguity here in case the essential property is needed for applications. This ambiguity does not exist for the elder rule for this dataset as there is a unique global minimum. However, if there were multiple global minima, the elder rule would again have to invoke a tribal council rule to resolve which bar is essential.

\begin{figure}[ht]
    \begin{subfigure}[t]{.49\textwidth}
    \centering
    \includegraphics[width=0.91\textwidth]{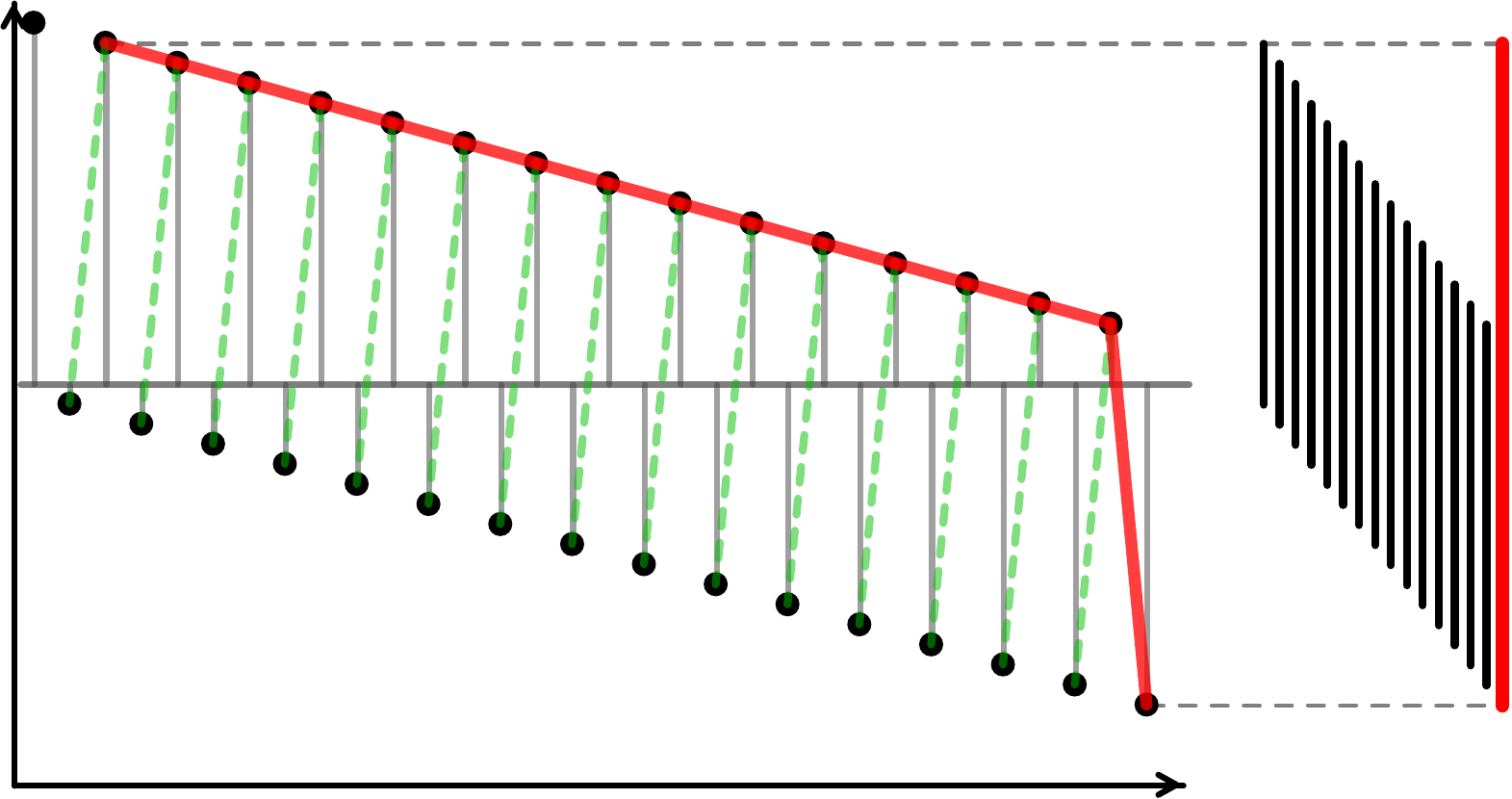}        \caption{}\label{subf:fullrangeelderA}
    \end{subfigure}
    \begin{subfigure}[t]{.49\textwidth}
    \centering
    \includegraphics[width=0.91\textwidth]{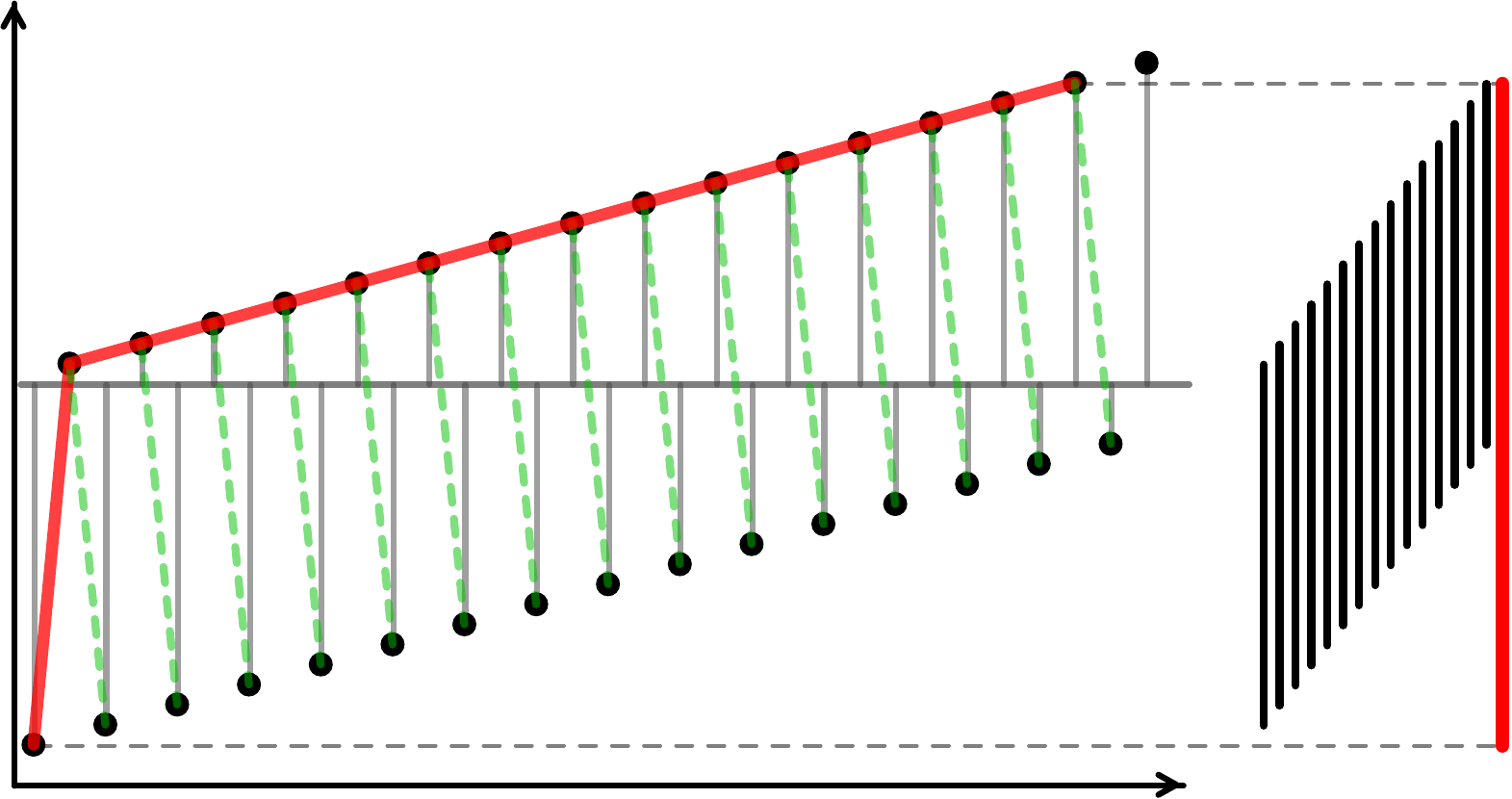}        \caption{}\label{subf:fullrangeelderB}
    \end{subfigure}
\caption{An essential bar that spans the full signal range. \subref{subf:fullrangeelderA} The maximum is at the left boundary while the minimum is at the right boundary, and \subref{subf:fullrangeelderB} the minimum is at the left boundary while the maximum is at the right boundary.}\label{fig:fullrangeelder}
\end{figure}  

The barcode construction rule to use depends on the need of the application domain. In certain settings, such as live updates of sensor data, data arrives in a streaming fashion, which can be incorporated via \emph{shifts}. A certain amount of data are leaving on one side of the dataset and new data are entering on the other. The process for constructing such shifts via surgeries will be discussed in Section \ref{sec:mshift}.

Using the same two rules (elder and local) we contrast their behavior under repeated \emph{$1$-shifts}. A $1$-shift changes the sequence at the boundary. On one side, one old sample is shifted out and discarded, while on the other side, one new sample is shifted in. Hence, some old \replaced{connection}{connectivity} information is lost and some new one is constructed. These constructions are localized to the ends of the domain.

If the elder rule is applied, barcodes can drastically change with each shift. In principle, barcodes can span the full length of the domain if the global minimum is on one end while the global maximum is on the other, as illustrated in Figure \ref{fig:fullrangeelder}. This effect is responsible for the worst case behavior in barcode construction algorithms \cite{baryshnikov2024time,ost2024banana}. Hence, in the presence of an elder rule, there is a potential need for global reconstruction of the barcode pattern at every update. In fact, any bar within the barcode can be subject to change at every 1-shift depending on the new data that arrives and the old data that is removed. Figure \ref{fig:eldercoherence} shows the results of the elder rule on shifted data. In this example, red traces the sample data minima and maxima involved in the elder barcode as the data shifts. Notice how the data first \marginnote{Review Q, Minor 13}\replaced{localizes the bar}{are localized} to the left, but then\added{, as new data shifts in, the bar} completely spans from the leftmost minimum to the rightmost maximum. This means that within two shifts the elder will have to reconfigure again (as the leftmost minimum will be shifted out). But this spanning phenomenon applies to all bars. Blue marks another barcode that is reconfigured by the addition of new data.

\begin{figure}[ht]
\includegraphics[width=\textwidth]{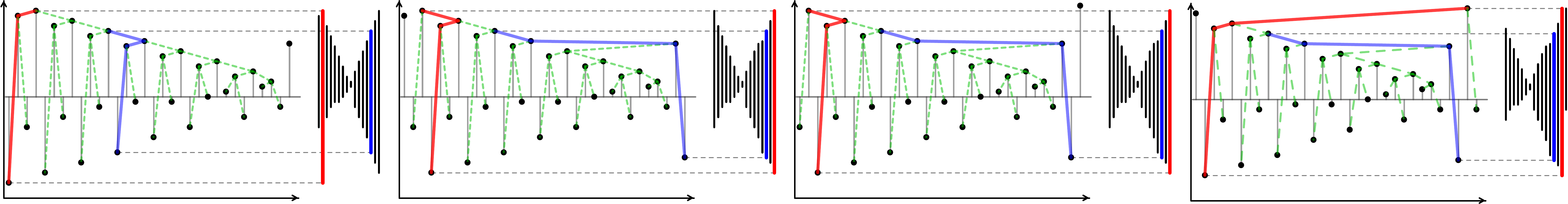}
\caption{Effects of 1-shifts using the elder rule with non-local elders. The sublevel set barcode is given with the essential bar highlighted in red and another non-essential bar in blue. In (outer left), we see the starting configuration. In (center left), we apply a 1-shift by moving the first point of the original sequence to the end. Notice that the new minimum means that the bar associated with blue changed. In (center right), we apply another 1-shift. Now there is a new local maximum at the end but it is at the boundary so does not yet contribute to any homological change. In (outer right), we apply one more 1-shift. The red essential bar is reconfigured and now spans from the leftmost minimum to the rightmost maximum.}\label{fig:eldercoherence}
\end{figure}

\begin{figure}[ht]
\includegraphics[width=\textwidth]{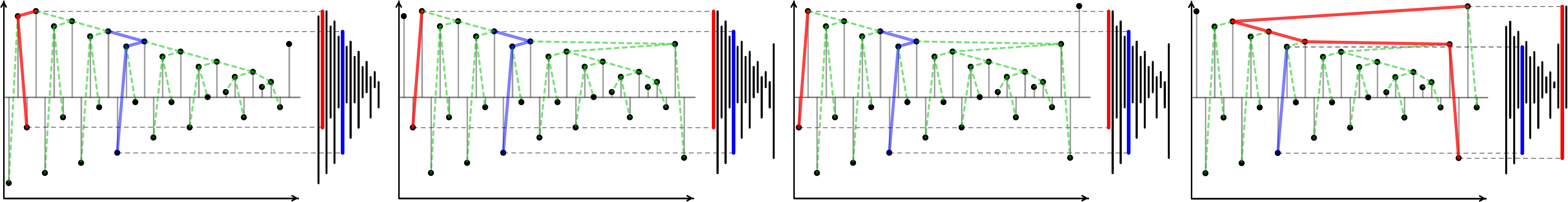}
\caption{Effects of 1-shifts using the local rule. The sublevel set barcode is given with the bar associated with the left neighbor of the global maximum is highlighted in red and another non-essential bar in blue (in the same position as in the elder example of Figure \ref{fig:eldercoherence}). In the first three 1-shifts only the left-most and right-most bars change and bars in between remain unchanged. However, when the global maximum moves to the right side, all barcodes change locally as a consequence of the behavior of the global maximum under the local neighbor barcode rule.}\label{fig:localcoherence}
\end{figure}

The consequence of the local barcode rule to a sequence of $1$-shifts is illustrated in Figure \ref{fig:localcoherence}. The barcode rule is based on local consideration, which means that the first two shifts do not change barcodes that are not associated with the changing data at the boundary. We see that despite new data arriving, the blue barcode remains unchanged. The red barcode does change, but only in response to local changes at the boundary. However, the behavior of the global maximum creates an exception. The last frame creates a non-boundary global maximum. This leads to all bars changing as they are now connecting to the right neighbor instead of the left. This can be seen with the blue barcode which now ends at a different maximum. Broadly, this leads to a rule of changes in barcodes based on new global maxima. If a bar changes sides to a global maximum, then it will change. Otherwise it will not.


We argue that this is a choice worth considering especially for streamed time series data where frame to frame comparisons are important and where a per-sample change of barcodes that is possible under the elder rule can potentially be harder to understand than the less frequent changes of the neighbor rule.






\section{Dualities and Symmetries}\label{sec:dual}

We have seen from Proposition \ref{prop:trisection} and Figure \ref{fig:inclusionduality} that there is a duality between sublevel sets and superlevel sets. The essence of Proposition \ref{prop:trisection} is that the disjoint union of sublevel, superlevel, and level sets always describes the total data, and that the flipping of the order keeps this union invariant with roles of sets reversed. Figure \ref{fig:inclusionduality} illustrates that this idea carries through to inclusions, and hence there is a dual structure on the level of filtrations as well. This is Theorem \ref{thm:strictsetduality}.

In this section, we explore numerous additional consequences of this duality. It turns out that the connectivity of the domain, specifically the presence of boundaries interacts with duality and can break symmetries, or create exceptions at the boundary. We are considering linear and circular domains (See Figure \ref{fig:sampledsine}). The linear domain has a boundary, while the circular domain does not. Unless stated otherwise, our claims of duality hold for circular domains and for interior points of linear domains. Yet, even when dualities that hold in the circular case are broken, the duality of Proposition \ref{prop:trisection} can still be used to recover dual properties that capture the nature of the original broken duality. Specifically, we will see that the duality under negation for circular domains that is well known \cite[Corollary 3.4]{biswas2023geometric} can be recovered when the duality of sublevel and superlevel sets are jointly considered (Figure \ref{fig:subsuperlevelduality}). Despite this, the effects of samples at the boundary can be quite subtle as we shall see. 

The main effect of the boundary that we have already seen is that local minima at the boundary do create a connected component, but local maxima do not join it with another (as there is no connected component beyond the boundary to merge with). Given that under change of order, minima and maxima swap roles between sublevel and superlevel sets\marginnote{Review Q: Minor 14}\replaced{. T}{, T}his asymmetry carries through but with switched roles. Former maxima at the boundary now create connected components\added{,} while former minima do not have a homological effect. Observe that this is still a duality of the asymmetry.



\subsection{Dualities between Sublevel and Superlevel Sets}
We summarize the dualities between sublevel and superlevel sets in Table~\ref{table:dualityR}. \marginnote{Review Y: Minor 12}\added{The entry in the left column are with respect to the original order of the data. The effects are discussed as the level is progressed in increasing order.}

\begin{table}[th]
    \begin{tabular}{r|c|c}
        Duality & Sublevel Set  & Superlevel Set\\
        \hline
        Minima & Create CCs & Split CCs \\
        Interior Maxima & Join CCs & Remove CCs \\
        Monotones & Grow CCs &  Shrink CCs \\
        Global Minima & Empty Set $\emptyset$ & Total Set $T$ \\
        Global Maxima & Total Set $T$ & Empty Set $\emptyset$ \\
    \end{tabular}
    \caption{Duality of sublevel and superlevel sets. CC is an abbreviation for {\em connected component}. 
    }\label{table:dualityR}
\end{table}

Connected components are created at levels of local minima for sublevel sets, whereas connected components are destroyed at levels of local minima for superlevel sets. Similarly, connected components merge together at levels of local maxima for sublevel sets, and disappear for superlevel sets. For this reason, minima and maxima provide dualities. Furthermore, what is local growth in length of a connected component over a monotone sequence is local shrinking in length of a connected component in the other. This gives the monotone duality. At levels of global minima (or lower), the sublevel set is the empty set whereas the superlevel set is the total set. The opposite is true for levels of global maxima (or higher). From this, we get the duality of global minima and maxima. 

\subsection{Dualities between Sublevel and Superlevel Set Persistence}

Next we consider sublevel set persistence to superlevel set persistence. 
A duality one might suspect is that a birth in the sublevel set barcode corresponds to a death in the superlevel set barcode. This is \emph{almost} true, but the samples at the boundary can break this duality as mentioned previously. 
If the local maximum is at a boundary, then it is not merging two connected components. The level of that local maximum might not be a death in the barcode. See Figure \ref{fig:f-versus-negative-f}.

The key dualities regarding birth and deaths of connected components between sublevel and superlevel set filtrations are in Table \ref{table:dualityF}.

\begin{table}[th]
    \begin{tabular}{c|c|c}
        Connected Component & Sublevel Set  & Superlevel Set\\
        \hline
        Creation & Level of Minima & Level of Interior Maxima \\
        Merging & Level of Interior Maxima & Level of Minima \\
    \end{tabular}
    \caption{Duality of sublevel and superlevel set filtrations with respect to changes in connected components.}\label{table:dualityF}
\end{table}

Dualities between sublevel and superlevel set persistence has been previously studied by taking an alternative perspective. Namely, instead of considering dualities between sublevel sets and superlevel sets, one can focus solely on sublevel sets (or superlevel sets) and consider the sublevel sets of the function and negative values of the function \cite{cohen2009extending,cultrera2024dynamically}. Since minima of a sequence function $f$ become maxima of $-f$, the barcodes of $f$ and $-f$ provide the same basic information as comparing the barcodes of the sublevel sets and superlevel sets of $f$.

The symmetries studied between barcodes of $f$ and $-f$ relies on a classic result in algebraic topology called \emph{Lefschetz duality} which states that the $p$-cohomology of a $d$-manifold is isomorphic to the $d-p$ relative homology of the manifold. This leads to a symmetry theorem from \cite{cohen2009extending,edelsbrunner2010computational} for persistence barcodes. Let $R$ denote a transformation that maps an interval $(a,b)$ to $(-b, -a)$. Let $B_p$ denote the $p^{th}$-dimensional barcode. The symmetry theorem states that for a continuous function $f$ on a $d$-manifold without boundary, the persistence barcodes of $f$ and $-f$ are reflections of each other as follows:
$$B_p(f)=B^{R}_{d-p-1}(-f)$$
where the barcodes are constructed from sublevel set filtrations. 
If $p=0$ and $d=1$, then this result says $B_0(f)=B_0^R(-f)$. 

The Symmetry Theorem does not directly apply to our setting since we are not working with continuous functions on manifolds without boundary. In particular, if our finite discrete function is on a circular domain, then a birth in the sublevel set barcode corresponds to a death in the superlevel set barcode, and vice-versa. Furthermore, observe that negation of a function corresponds to inversion of order of samples in our setting.
    
\begin{proposition}[Symmetry of Sublevel and Superlevel Barcodes on a Circular Domain]
\label{prop:discrete-symmetry-thm-circle}
   Let $X:=x[0, \dots, N-1]$ be a sequence on a circular domain. If $(b,d)$ is in the sublevel set barcode, then $(d,b)$ is in \replaced{a}{the} superlevel set barcode.
\end{proposition}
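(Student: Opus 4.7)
The plan is to combine Theorem \ref{thm:strictsetduality} with Propositions \ref{prop:min} and \ref{prop:max} to produce a correspondence between bars that swaps birth and death levels. First, I would note that the circular domain has no boundary, so every extremum is interior. Proposition \ref{prop:min} then says that the sublevel filtration creates a connected component exactly at levels of local minima, and Proposition \ref{prop:max} says it merges connected components exactly at levels of local maxima. By Theorem \ref{thm:strictsetduality}, the superlevel filtration is the order-reverse dual of the sublevel filtration, so after inverting the order the roles of minima and maxima swap: maxima become the creation levels of the superlevel filtration and minima become its merge levels.

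Next, I would observe that on a circular domain the number of local minima equals the number of local maxima (an easy consequence of traversing the cycle and alternating monotone segments between extrema). Hence the sublevel and superlevel barcodes contain the same number of bars, and each is a pairing of minima with maxima.

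Given a bar $(b,d)$ in the sublevel barcode, $b$ is the level of some minimum $m$ at which a new connected component is born and $d$ is the level of some maximum $M$ at which this connected component is merged. I would then construct a superlevel barcode by applying the dualized version of whichever rule was used for the sublevel barcode: in the superlevel filtration $M$ (at level $d$) creates a connected component and $m$ (at level $b$) merges it, so the dualized rule pairs $M$ with $m$ and yields a bar with birth $d$ and death $b$, i.e.\ the bar $(d,b)$.

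The main obstacle is that barcode pairings are not canonical when several extrema share a level, so there is no unique map between barcodes. However, the statement only asserts existence of \emph{a} superlevel barcode containing $(d,b)$, so it suffices to exhibit one such barcode. The duality in Theorem \ref{thm:strictsetduality} guarantees that for any valid sublevel barcode construction rule, applying the order-reversed rule to the superlevel filtration yields a valid superlevel barcode, and the induced pairing sends the sublevel bar $(b,d)$ to the superlevel bar $(d,b)$, as required.
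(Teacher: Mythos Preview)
Your argument is correct and uses the same core ingredients as the paper (Propositions~\ref{prop:min} and~\ref{prop:max}, together with the sublevel/superlevel duality), but the duality step is cashed out differently. The paper argues geometrically: from the bar $(b,d)$ it asserts the existence of a monotonically increasing subsequence in the domain between the paired minimum and maximum, and then reads the dual bar off this segment (in the superlevel filtration the component born at the maximum grows along that segment and merges at the minimum). You instead invoke Theorem~\ref{thm:strictsetduality} at the level of filtrations and dualize the barcode construction rule itself, which is exactly the mechanism the paper articulates only in the Remark following the proposition (and later in Theorem~\ref{thm:subsuperdual}). Your route is slightly more abstract but arguably cleaner: the paper's monotone-segment assertion is immediate for the local barcode rule but is not obvious for an arbitrary pairing such as the elder rule, where the paired minimum and maximum need not be joined by a single monotone arc on the circle. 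Your argument sidesteps this entirely, and you also make the handling of non-uniqueness (the ``a'' versus ``the'' in the statement) explicit inside the proof rather than deferring it to the subsequent remark. The extra observation that $\#\text{minima}=\#\text{maxima}$ on a circular domain is true and harmless, though not strictly needed for the existence claim.
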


\begin{proof}
\marginnote{Review Y: Minor 13}
Since we are in the circular domain case, we know that every local extremum corresponds to a birth or death event in \replaced{a}{the} sublevel and superlevel set barcode. Let $(b,d)$ be in \replaced{a}{the} sublevel set barcode. By Propositions \ref{prop:min} and \ref{prop:max}, we know that $b$ is the level of a local minimum and $d$ is the level of a local maximum. Furthermore, there is a monotonically increasing subsequence between levels $b$ and $d$. Next, if we consider \replaced{a}{the} superlevel set barcode, the local maximum at level $d$ is now the level of a birth of a connected component. The monotonically increasing subsequence between levels $b$ and $d$ now indicates the growth of this connected component until it merges with another at level $b$. Hence, we find that $(d,b)$ is \replaced{in the}{in a} superlevel set barcode. 
\end{proof}

\added{
\begin{remark}
Notice that this is a duality on bars. Meaning that it does not dualize the barcode construction rule necessarily. However, the dual barcode will be unique and fixed by the chosen barcode construction rule for the original barcode that is being dualized. In some cases, it is easy to see how the barcode constructions rule behaves under this duality. We give two examples below. An alternative way to see this is to dualize the merge tree and observe that the sublevel and superlevel barcodes are dual paths in the merge tree. Hence, a fixed barcode rule on the sublevel set will induce a fixed barcode rule on the superlevel set.
\end{remark}
}

\added{
\begin{example}[Local Barcode Rule]
Assume that we use a local barcode rule where we connect a minimum with the maximum to the right of it. Then if we look at the same extrema from the superlevel order, we see that we can keep the same bars when we connect the same information, which from this order appears to be to the left. If we pick the neighbor on the other side, then this relationship simply flips.
\end{example}
}
\added{
\begin{example}[Elder Rule with Tribal Council]
Assume that we use the elder rule with tribal council to construct our bars. Assume some tribal council rule to pick a survivor at local maxima, say the left-most elder. Then each bar in the sublevel set has a corresponding bar in the superlevel set, but the tribal council decisions are performed in the inverse order as originally chosen. That is, from the superlevel perspective, the right-most elder would correspond to the bar pair.
\end{example}
}

As mentioned earlier, the symmetry can be broken when a boundary is introduced. For example, in Figure \ref{fig:f-versus-negative-f}, we see there is a bar in the sublevel set barcode that is not present in the superlevel set barcode. This is because the number of minima is not equal to the number of maxima so the number of births differ in the two barcodes.

\begin{figure}[htp]
    \centering
    \begin{subfigure}[b]{0.48\textwidth}
        \centering
        \includegraphics[width=1\textwidth]{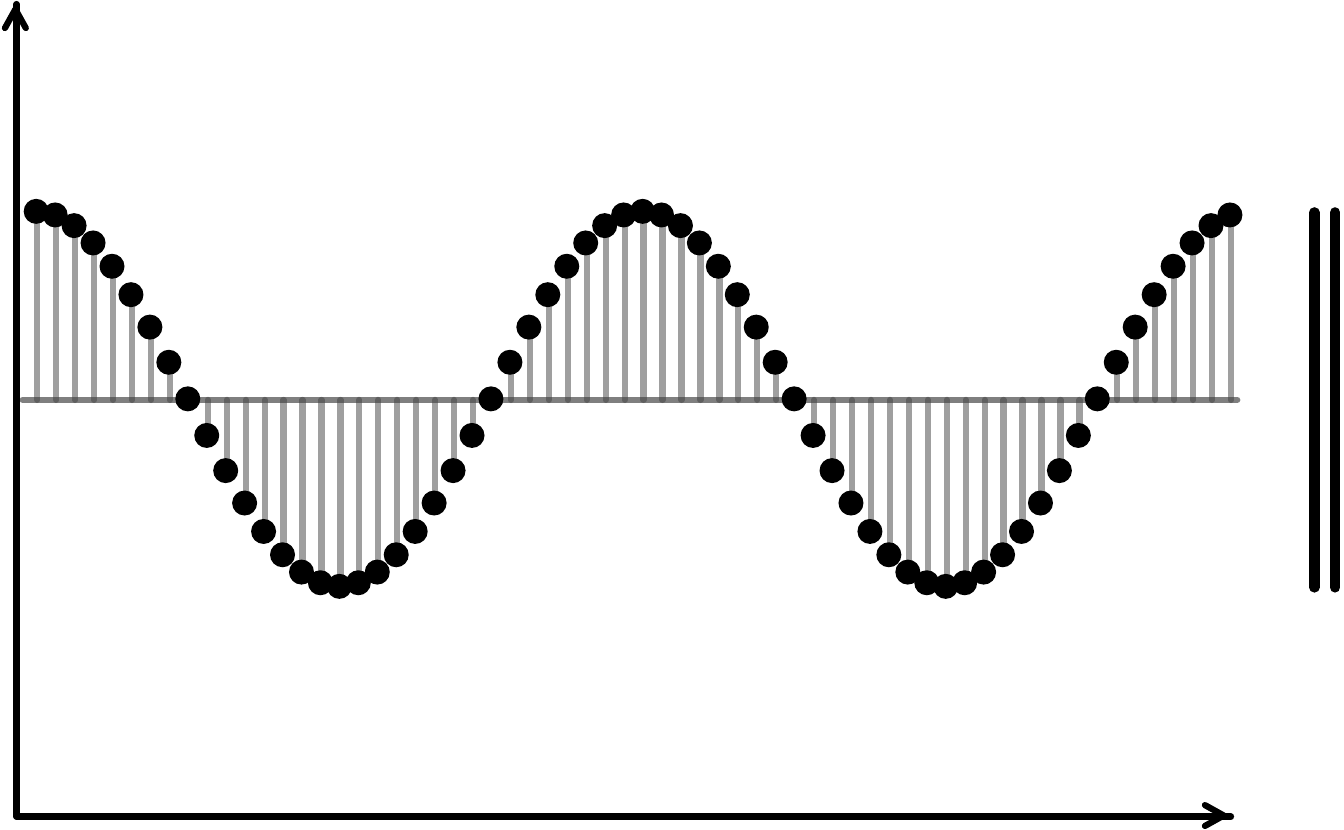}
        \caption{$X$ with given order of levels}
        \label{fig:cosine}
    \end{subfigure}
    \hfill
    \begin{subfigure}[b]{0.48\textwidth}
        \centering
        \includegraphics[width=1\textwidth]{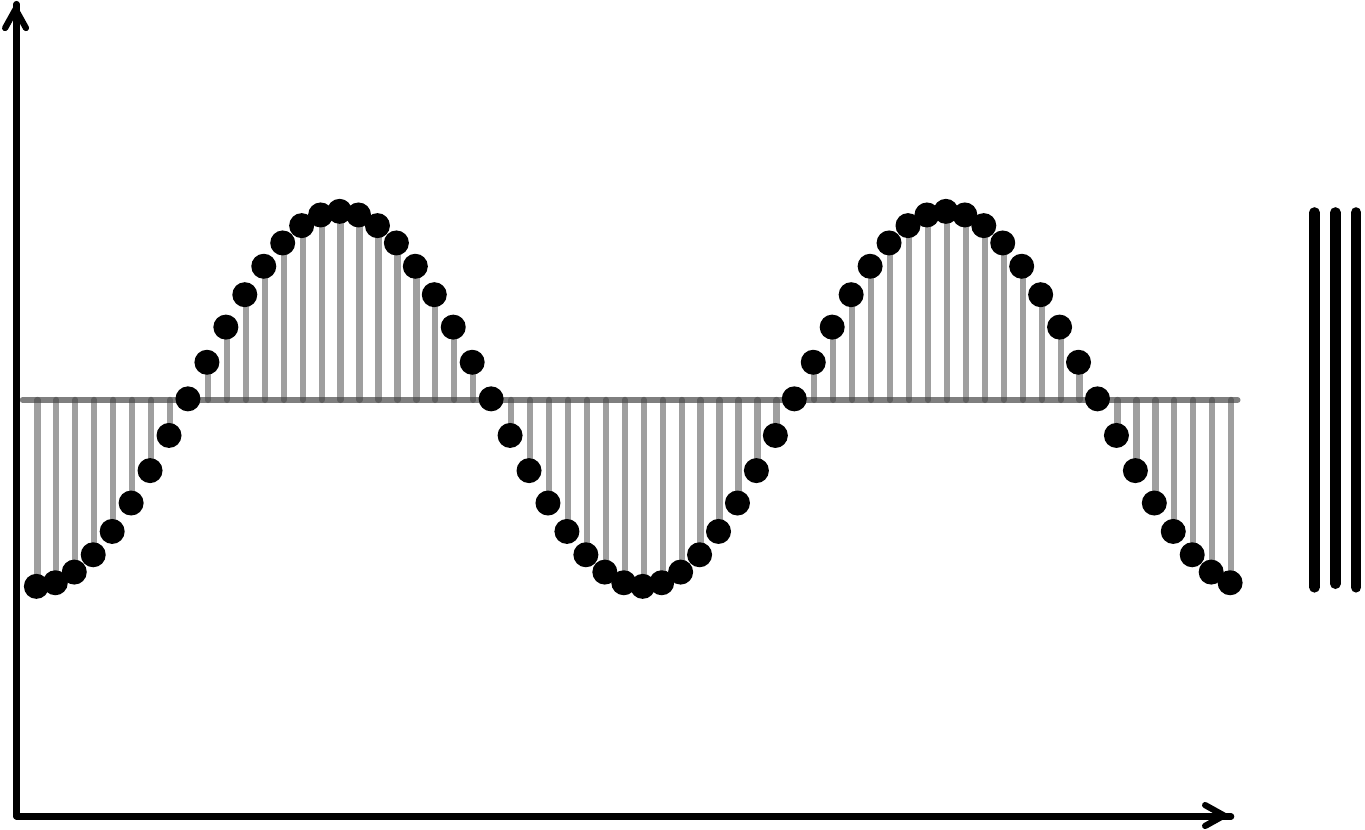}
        \caption{$X$ with inverted order of levels}
        \label{fig:negative-cosine}
    \end{subfigure}
    \caption{Sequence $X$ under a given order $X,<$  shown in \subref{fig:cosine} and its inverted order $X,>$ shown in \subref{fig:negative-cosine} along with their barcodes. Figure \subref{fig:cosine} has two bars in $B(X,<)$ as there are two minima. Since $X$ has three maxima, we see in case Figure \subref{fig:negative-cosine} that $X$ with inverted order has three minima which gives three bars in $B(X,>)$.}
    \label{fig:f-versus-negative-f}
\end{figure}

Additionally, we saw in Figure \ref{fig:one-min-one-max}, that it is possible to have the same number of minima and maxima but different sublevel and superlevel barcodes. However, if the number of minima equals the number of maxima, and every minima corresponds to the birth of a connected component and every maximum corresponds to a death of a connected component, then we get the following symmetry theorem. Note that this is \emph{not} saying that the pairings are dual. It is possible to have $(b,d)$ in the sublevel set barcode and $(d,b)$ \emph{not} in the superlevel set barcode due to the elder convention.

\begin{figure}[htp]
    \centering
    \begin{subfigure}[b]{0.48\textwidth}
        \centering
        \includegraphics[width=1\textwidth]{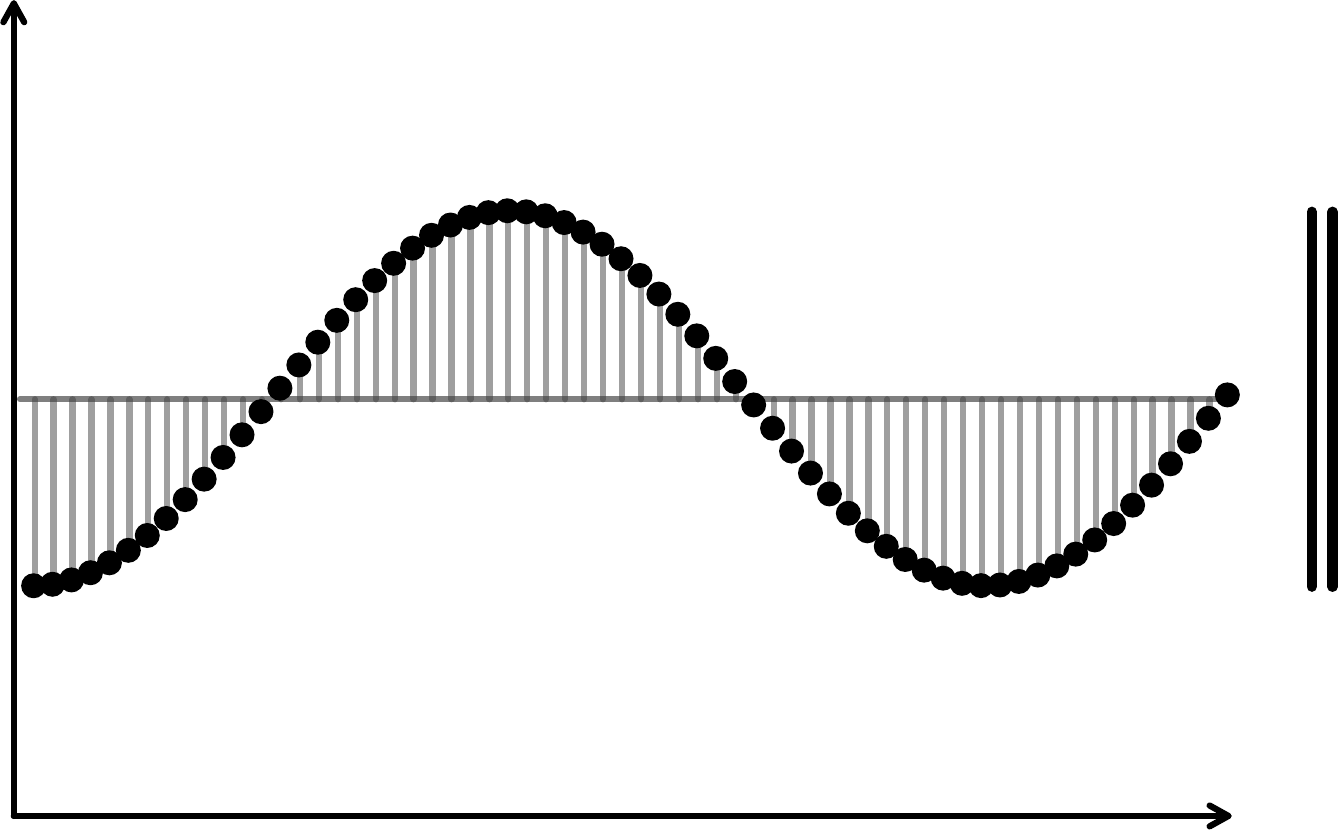}
        \caption{$X$ with a given order of levels}
        \label{fig:f_scrap}
    \end{subfigure}
    \hfill
    \begin{subfigure}[b]{0.48\textwidth}
        \centering
        \includegraphics[width=1\textwidth]{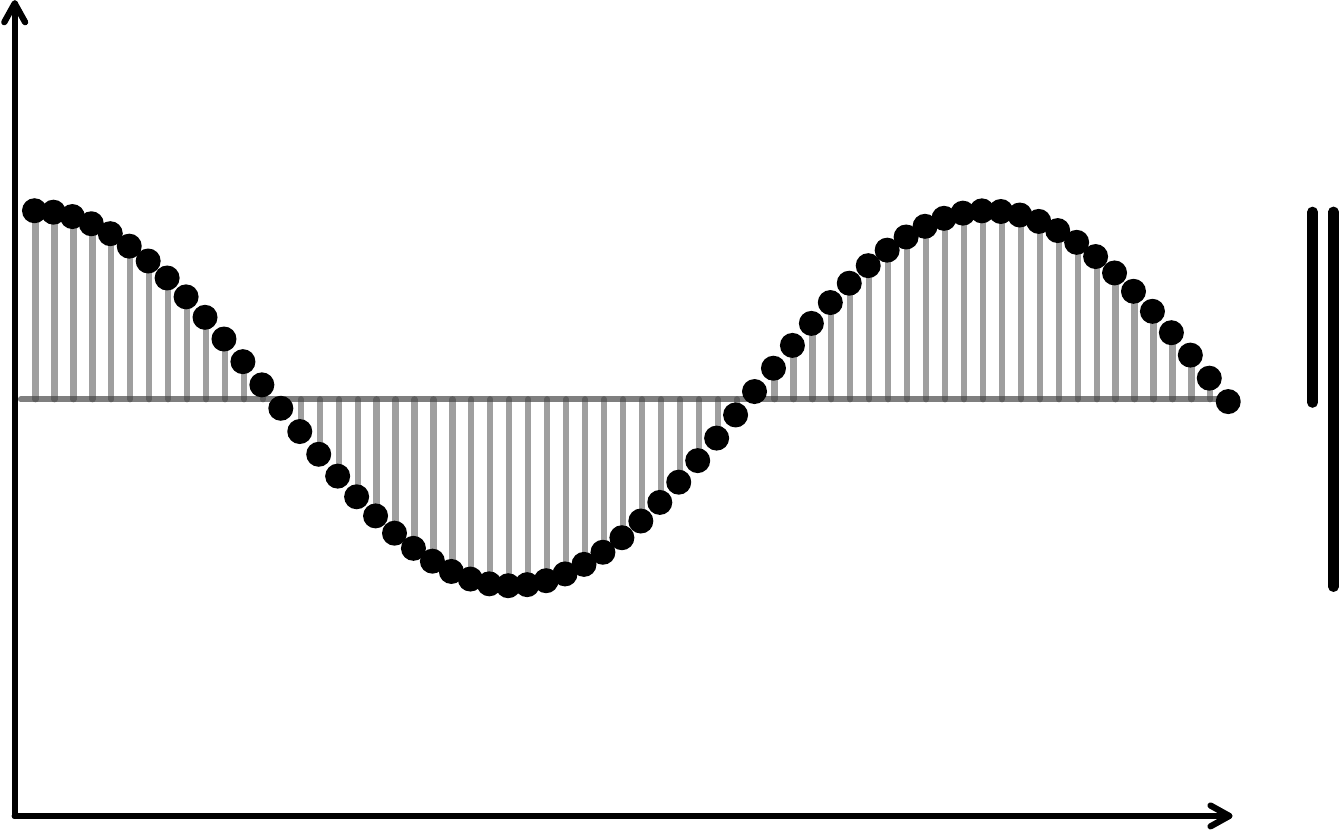}
        \caption{$X$ with inverted order of levels}
        \label{fig:negative-f}
    \end{subfigure}
    \caption{Same number of minima and maxima but asymmetric barcodes in $B(X,<)$ and $B(X,>)$. Although the number of minima equals the number of maxima in $X,<$ and $X,>$, we see that the barcodes differ. In \subref{fig:f_scrap}, two connected components are born at the global minima, and one dies at the level of $\max(X)$. In \subref{fig:negative-f}, two connected components are born at different levels, and one dies at the level of $\max(X)$.}
    \label{fig:one-min-one-max}
\end{figure}

\begin{proposition}[Symmetry of Sublevel and Superlevel Barcodes on a Linear Domain]\label{prop:discrete-symmetry-thm}
Let $X:= x[0]\dots x[N-1]$ be a sequence. Suppose that only one of $x[0]$ or $x[N-1]$ is a local minimum. Additionally, suppose that if $X$ has more than two extrema, then $x[0]$ and $x[N-1]$ are levels of another extremum of the same type.  Then, if $b$ is a birth in the sublevel set barcode of $X$, then $b$ is a death in the superlevel set barcode of $X$. 
\end{proposition}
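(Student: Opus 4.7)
The plan is to track every birth in the sublevel barcode back to the level of a minimum via Proposition \ref{prop:min}, and then show that the same level is realized as a death in the superlevel barcode. Deaths in the superlevel barcode come from two sources: interior minima (via the dualization of Proposition \ref{prop:max} under Theorem \ref{thm:strictsetduality}, which swaps the roles of minima/maxima and sublevel/superlevel simultaneously), and the superlevel essential bar terminating when the superlevel set saturates to the total set, which by Lemma \ref{lem:globalmax} occurs at the level of the global data minimum. So the whole argument reduces to showing that every sublevel birth level is either the level of some interior minimum or the global data minimum.

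First I would fix a birth $b$ in the sublevel barcode and use Proposition \ref{prop:min} to locate a local (possibly flat) minimum $m$ at level $b$. If $m$ is interior, the dualized form of Proposition \ref{prop:max} immediately produces a merging event at level $b$ in the superlevel filtration, hence a death. Otherwise $m$ occupies a boundary sample; condition 1 forces exactly one endpoint to be a minimum, so the other endpoint is necessarily a boundary maximum. At this point I would split on the total number of extrema of $X$.

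If $X$ has exactly two extrema, any non-monotone behavior on a linear domain would manufacture an additional local extremum, so $X$ must be monotone from $m$ to the opposite-boundary maximum. Then $m$ is simultaneously the global data minimum; the superlevel essential bar is born at the opposite-boundary global maximum and dies when the superlevel set becomes total, i.e., at level $b$ by Lemma \ref{lem:globalmax}. Thus $b$ is a death in the superlevel barcode. If instead $X$ has more than two extrema, condition 2 supplies an additional extremum of the same type as $m$ at the same level $b$; since the only other boundary sample is already accounted for as the lone boundary maximum, this extra minimum must be interior, and applying the earlier dualization argument to it produces the required superlevel death at level $b$.

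The main obstacle is the boundary asymmetry emphasized after Table \ref{table:dualityF}: a boundary minimum creates a sublevel connected component but on the superlevel side merely extends an already-present component, so on its own a boundary minimum does not contribute a superlevel death at the same level. Conditions 1 and 2 exist precisely to bypass this obstruction, either by ensuring a witnessing interior minimum at the offending level or by collapsing to the degenerate monotone case in which the superlevel essential bar absorbs the missing death. The only delicate step is therefore verifying that the two-extrema versus more-than-two-extrema dichotomy really is exhaustive for boundary-minimum births, which is immediate once condition 1 pins down which endpoint plays which role.
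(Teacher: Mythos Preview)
The paper states Proposition~\ref{prop:discrete-symmetry-thm} without proof, so there is no argument to compare against directly. Your approach is sound and fills this gap cleanly: reducing every sublevel birth to the level of a minimum via Proposition~\ref{prop:min}, then showing that level reappears as a superlevel death either through an interior minimum (dualized Proposition~\ref{prop:max}) or through the termination of the superlevel essential bar at the global minimum (Lemma~\ref{lem:globalmax}). The case split on the number of extrema, combined with condition~1 pinning the non-minimum boundary sample as a maximum, correctly forces the ``another extremum of the same type'' from condition~2 to be interior, which is exactly what is needed to invoke the superlevel merge. The two-extrema monotone case is handled properly as well.

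One small clarification worth adding when you write it up: Table~\ref{table:dualityF} lists superlevel merging at ``Level of Minima'' without the qualifier \emph{interior}, but your argument correctly relies on the interior restriction (a boundary minimum only extends a superlevel component rather than merging two). You are using the precise dual of Proposition~\ref{prop:max} via Theorem~\ref{thm:strictsetduality}, not the table, so this is not a gap in your reasoning---just be explicit that the interior hypothesis carries through the dualization so a reader does not get confused by the looser wording in the table.
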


\begin{figure}[th]
\centering
\includegraphics[width=0.99\textwidth]{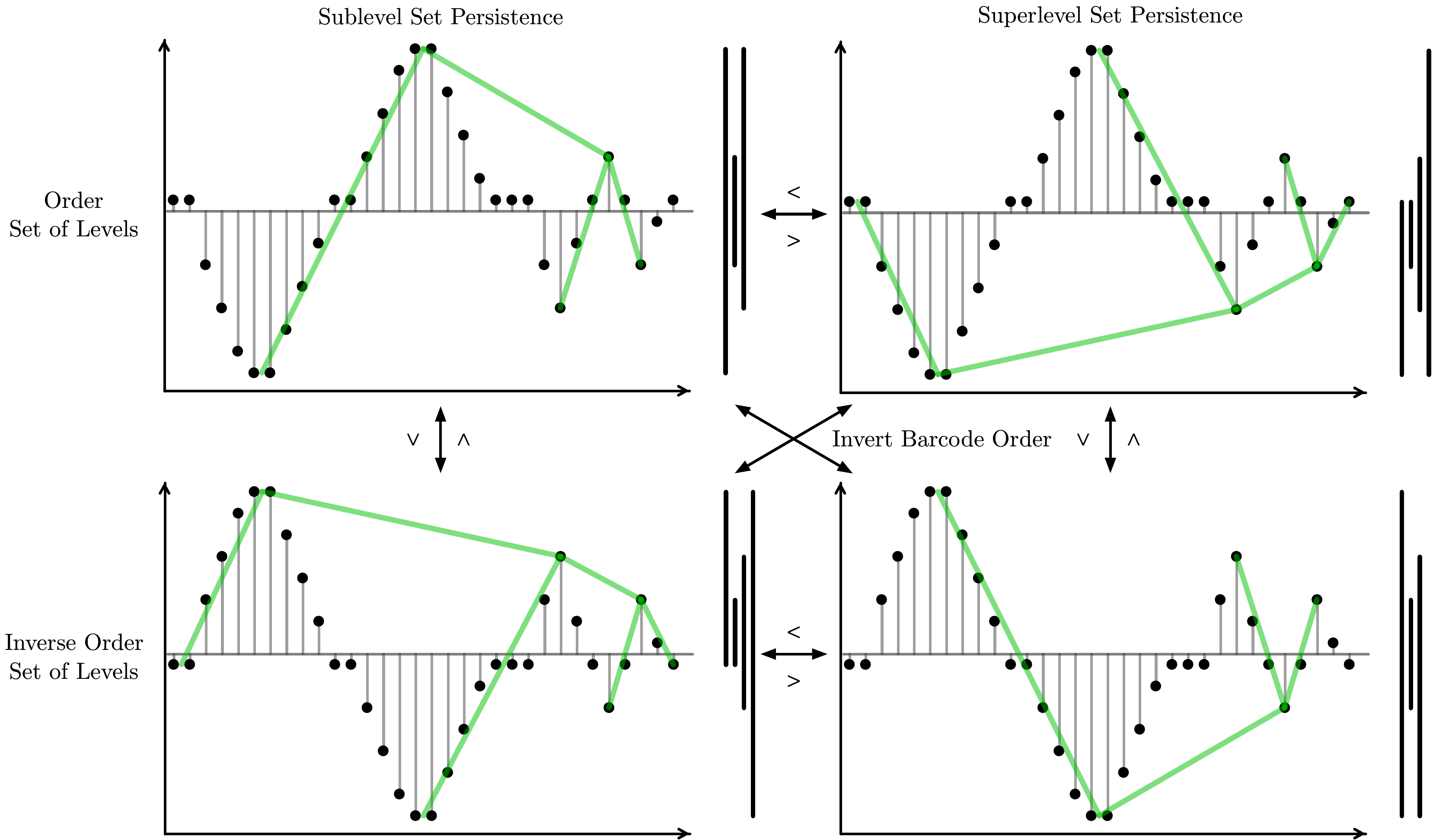}
\caption{Duality of sublevel and superlevel sets and the duality of orders of sets of levels. Notice that the diagonals contain the same information up to inversion, including merge trees (green) and barcodes.}\label{fig:subsuperlevelduality}   
\end{figure}

However, when we combine duality of sublevel and superlevel sets\deleted{,} and inversion of order of levels (``negation")\marginnote{Review Q: Minor 15}\added{,} we recover the following interesting duality shown in Figure \ref{fig:subsuperlevelduality}. The rows of the graph show the same function. The top row shows the direct function while the bottom row shows the same function with the order of levels inverted. This can be thought of as negating a function since negation implies inversion of order. The columns represent which direction the level sets are included to compute persistence. The left column is sublevel set persistence, while the right column is superlevel set persistence. Inversion of order of sets of levels turns local minima into local maxima (and vice versa), which play dual roles in sublevel set persistence. The behavior in Figure \ref{fig:subsuperlevelduality} is captured by the following theorem:

\begin{theorem}[Relationship of Sublevel and Superlevel Set persistence by Order Inversion]
Given a fixed order of levels, the superlevel set of the inverted order of levels is identical to the sublevel set of the original order, and the sublevel set of the inverted order is identical to the superlevel set of the original order.\label{thm:subsuperdual}
\end{theorem}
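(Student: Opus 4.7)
The plan is to unfold the definitions of sublevel and superlevel sets under the two orders, leveraging the formal symmetry of Definitions \ref{def:structsublevel} and \ref{def:structsuperlevel} and the trisection identity of Proposition \ref{prop:trisection}. First I would introduce notation to distinguish the two orders: let $<$ denote the original order on the level set $L$, and let $\prec$ denote its inversion, defined by $a \prec b$ iff $b < a$. The corresponding strict ``greater than'' relation of the inverted order, which I denote $\succ$, then coincides with the original $<$. I would then observe that a level set $L_l = \{i \in \mathbb{I} : x[i] = l\}$ is defined purely by equality and hence does not depend on the choice of order.

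Next I would simply rewrite each side of the claimed identities. The sublevel set in the inverted order at level $l$ unpacks to
$$\bigcup_{\{s \in L \,:\, s \prec l\}} L_s \;=\; \bigcup_{\{s \in L \,:\, s > l\}} L_s \;=\; L_{>l},$$
which is the original superlevel set. Symmetrically, the superlevel set in the inverted order at level $l$ unpacks to
$$\bigcup_{\{s \in L \,:\, s \succ l\}} L_s \;=\; \bigcup_{\{s \in L \,:\, s < l\}} L_s \;=\; L_{<l},$$
the original sublevel set. This is precisely the exchange of roles already captured by Proposition \ref{prop:trisection}: the trisection $T = L_{<l} \sqcup L_l \sqcup L_{>l}$ is preserved under $< \leftrightarrow >$ with the outer two pieces interchanged.

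There is no substantive obstacle here; the only care needed will be notational bookkeeping between the two orders and an explicit observation that level sets themselves are order-independent. Once that is spelled out, both identities reduce to direct rewrites of the definitions. If I wanted a cleaner presentation, I would package the argument as a one-line corollary of Proposition \ref{prop:trisection}, noting that inversion of order permutes the two outer summands of the trisection while fixing the middle summand, which is exactly the statement of the theorem.
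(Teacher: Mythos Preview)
Your argument is correct and, if anything, slightly more direct than the paper's. You work pointwise at the level of individual sets, unfolding Definitions \ref{def:structsublevel} and \ref{def:structsuperlevel} and observing that order inversion literally swaps the defining conditions $s<l$ and $s>l$; Proposition \ref{prop:trisection} then serves only as a sanity check that the middle summand $L_l$ is fixed. The paper instead invokes Theorem \ref{thm:strictsetduality} and argues at the level of the full inclusion diagram of Figure \ref{fig:inclusionduality}: inverting the order reverses the sequence $L_0,\ldots,L_s$ of level sets, and the resulting chain of sublevel inclusions is seen to coincide with the original chain of superlevel inclusions read in the opposite direction. The two arguments are equivalent in content---both bottom out in the same $<\leftrightarrow>$ swap---but yours isolates the set-level identity cleanly, while the paper's phrasing emphasizes the filtration-level consequence that feeds into the subsequent corollaries on barcodes and merge trees.
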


\begin{proof}
By Theorem \ref{thm:strictsetduality} the sublevel set filtration has a corresponding dual superlevel set filtration. Using the notation in Figure \ref{fig:inclusionduality}, consider a fixed filtration order, $L_0,\ldots,L_s$ and also its inverted order. This inverts the set of levels in the top inclusion sequence as shown in the figure as follows\marginnote{Review Q: Minor 16}\added{:} $L_s,\ldots,L_0$. Notice that this sequence of inclusions is identical to the data of the original sequence as given by the superlevel direction. Hence they are identical up to orientation of the filtration (left for sublevel, right for superlevel).
\end{proof}

Contrary to direct duality within sublevel sets for circular domain this theorem holds for both linear and circular domains. Hence one need not consider effects of the boundary.

\begin{corollary}[Barcodes of Sublevel and Superlevel Set Persistence by Order Inversion]
The bars in a barcode computed from sublevel set persistence on a finite ordered dataset are the same as the bars in a barcode computed from superlevel set persistence with the order of the bars inverted.     
\end{corollary}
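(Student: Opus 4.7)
The plan is to derive this corollary essentially as an immediate consequence of Theorem \ref{thm:subsuperdual}, since barcodes are completely determined by their generating filtration together with the barcode construction rule. The core observation is that if two filtrations coincide as sequences of inclusions of subsets, then any barcode produced by the same construction rule must coincide as well.

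First, I would invoke Theorem \ref{thm:subsuperdual} to write down the identification of filtrations: the sublevel set filtration $\emptyset \hookrightarrow L_{<1} \hookrightarrow \cdots \hookrightarrow T$ associated to the original order on $L$ is, as a sequence of sets and inclusion maps, literally the same sequence as the superlevel set filtration of the order-inverted data (read in its natural direction, i.e. starting from the empty set). This is not merely isomorphism of filtrations but equality, as Figure \ref{fig:inclusionduality} already exhibits: the chain $L_{<1} \subset L_{<2} \subset \cdots$ reindexed under $<\leftrightarrow>$ is the chain $L_{>s-1} \subset L_{>s-2} \subset \cdots$.

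Second, I would argue that applying any fixed barcode construction rule (elder, local, or one of the tribal council variants discussed in Section \ref{sec:barcode}) to the same filtration yields the same multiset of persistence pairs. Births occur exactly at the inclusion steps that create new connected components (by Proposition \ref{prop:min}) and deaths occur exactly at steps that merge them (by Proposition \ref{prop:max}); these steps are intrinsic to the filtration and do not depend on which of the two orders we use to label them. Hence the pairs produced are identical.

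Finally, I would handle the only bookkeeping issue, namely the meaning of \emph{inverted bars}. Each endpoint of a persistence pair is a level $\ell \in L$, and under the order inversion $<\leftrightarrow>$, the same level $\ell$ now plays the opposite role: what was a birth level in the sublevel filtration appears as a death level in the superlevel filtration, and vice versa. Thus a pair $(b,d)$ in the sublevel barcode corresponds to the pair $(d,b)$ in the superlevel barcode of the original data, which is exactly the statement that the bars agree after inverting their order. I expect no substantive obstacle here; the only pitfall is notational, namely being consistent about whether one records pairs as (birth, death) read in the original order or the inverted order, but this is entirely a labeling choice mediated by Theorem \ref{thm:subsuperdual} and imposes no new content on the argument.
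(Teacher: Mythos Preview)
Your proposal is correct and takes essentially the same approach as the paper: the paper's entire proof is the single sentence ``Both corollaries are immediate as Theorem \ref{thm:subsuperdual} operates on the level of filtrations,'' and your steps 1--2 spell out exactly this reasoning in more detail. One small caution: in your step 3 you phrase the conclusion as ``$(b,d)$ in the sublevel barcode corresponds to $(d,b)$ in the superlevel barcode \emph{of the original data},'' but that bar-by-bar pairing on the original data is the content of Proposition \ref{prop:discrete-symmetry-thm-circle} (circular case) and is more delicate on linear domains; the corollary here is really the coordinate-change statement that the sublevel barcode equals the superlevel barcode of the \emph{order-inverted} data, with ``inverting the bars'' just the relabeling back to original-order coordinates---which is what your steps 1--2 already establish cleanly.
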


\begin{corollary}[Merge Trees of Sublevel and Superlevel Set Persistence by Order Inversion]
Merge trees computed for sublevel set persistence from a finite ordered dataset are the same as merge trees computed from superlevel set persistence with the order of the merge tree inverted. 
\end{corollary}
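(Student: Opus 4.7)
The plan is to derive this corollary directly from Theorem \ref{thm:subsuperdual}. Recall that a merge tree, as defined in Section \ref{sec:barcode}, is a combinatorial object whose nodes correspond to the events where connected components are created or merged during the sublevel-set filtration, decorated with the index and level of the associated extremum. The merge tree is therefore determined by the underlying sequence of inclusions in the filtration, together with the within-level subfiltration of Section \ref{subsec:withinlevelfiltration} that resolves non-generic levels.

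My first step would be to invoke Theorem \ref{thm:subsuperdual} to identify, as sequences of subsets of $\mathbb{I}$, the sublevel set filtration of the original order with the superlevel set filtration of the inverted order, and symmetrically the superlevel set filtration of the original order with the sublevel set filtration of the inverted order. Because the two filtrations agree set-by-set (the only difference being the direction in which the level poset is traversed), the induced sequences of creation and merge events on connected components also agree, along with their level decorations read under the opposite order. Consequently the two merge trees must be isomorphic as decorated rooted trees, once we track the reversal of the level labels.

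The remaining point is to match the rooted orientations, which is where the word ``inverted'' in the statement does its work. In the sublevel picture, the root of the merge tree sits at the global maximum and the leaves sit at minima, because the filtration sweeps upward and minima create connected components (Proposition \ref{prop:min}) while interior maxima merge them (Proposition \ref{prop:max}). Under order inversion, what was a global maximum becomes a global minimum; in the superlevel filtration of the inverted-order data, that same element is now the first to create a connected component, i.e., a leaf. Dually, the original leaves become the last merging event, i.e., the root. This is precisely the reversal of the rooted-tree orientation claimed in the statement, and it follows by the same argument used in the preceding barcode corollary, which merge trees refine.

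The only obstacle is notational rather than mathematical: one must carefully articulate what ``inversion of a merge tree'' means as a combinatorial operation, namely the simultaneous reversal of the root-to-leaf direction and of the level decorations, and then verify that this exactly matches the filtration reversal supplied by Theorem \ref{thm:subsuperdual}. No new topological or algebraic content beyond Theorem \ref{thm:subsuperdual} is required, so the proof is essentially a one-line appeal once the bookkeeping is in place.
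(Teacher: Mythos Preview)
Your proposal is correct and follows the same approach as the paper, which dispatches both this corollary and the preceding barcode corollary in a single sentence: they are ``immediate as Theorem \ref{thm:subsuperdual} operates on the level of filtrations.'' Your write-up simply makes explicit the bookkeeping (what ``inversion of the merge tree'' means and how roots and leaves swap under the level reversal) that the paper leaves implicit.
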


Both corollaries are immediate as Theorem \ref{thm:subsuperdual} operates on the level of filtrations. Observe that it is sufficient to implement only one direction of the filtration (sublevel or superlevel) and the omitted case can be reconstructed by inverting the data levels, applying a functor on the filtration (such as homology) to compute persistence information such as a barcode or a merge tree, and then \marginnote{Review Q: Minor 17}invert\added{ing} the order of the result to arrive at the equivalent outcome. This effect on both barcodes and merge trees can be seen in Figure \ref{fig:subsuperlevelduality}. \marginnote{Review Y: Minor 14}\added{A duality of merge trees under sign inversion has been asserted without proof before \cite{carr2003computing} and it is known that there is a sign inversion duality in extended persistence known as the Persistence Symmetry Theorem \cite{edelsbrunner2010computational}. Its proof \cite{cohen2009extending} relies on extended homology, which is not necessary in our case. Notice that our result is more general than negation as it applies to any order-preserving inversion, of which the negative is a special case. For example, assume the usual metric structure of $\mathbb{R}$. A map from $f:x\mapsto e^{-1}$ also inverts the order and is order-preserving. Hence, it is a valid example. However, in general, there is no assumption of any metric or group structure in our proof.}\marginnote{Review Y: Minor 15} 

\marginnote{Review Q: Major 2, part 1}
\deleted{The last duality we mention is the dual to homology -- cohomology. Cohomology groups are similar to homology groups but less geometric and are motivated by algebra. For instance, cohomology has a ring structure that homology does not. To compute homology groups, we form a chain complex that typically consists of maps between free abelian groups. To compute cohomology groups, we form a cochain complex that consists of maps between groups of homomorphisms. More details on this can be found in Chapter V of \cite{edelsbrunner2010computational}. A key observation is that the ranks of the homology groups are exactly the same as the ranks of the cohomology groups. Additionally, homology and cohomology are dual as vector spaces and so they have the same dimension. This follows from the Universal Coefficient Theorem (Theorem 3.2 of \cite{HatcherAlgebraic02}). Furthermore, since barcodes are determined by dimensions and ranks, we have that the persistent homology and cohomology barcodes are the same \cite{de2011dualities}.}\marginnote{Review Q: Minor 18}  

\section{Order Preservation and Functions}

All our results only require order preser\-vation of the data. Numerous results stem directly from this requirement. As is known for circular domains, Lefschetz duality leads to a duality of barcodes when negating a function. In our setting, this corresponds to an inversion of order. However, given that any transformation that does not disturb the order does not change barcodes in our setting, one can derive more general results. For details on how order data can be related to data presumed to be in $\mathbb{R}$ see Appendix \ref{app:quantsamp}.

Consider the example of a constant global vertical offset of data. If we have a function $f$, we consider $f-v$, where \replaced{$v$}{$h$}\marginnote{Review Q: Minor 19} is the vertical offset. That is, applying a transformation $f(x) \mapsto f(x)-v$ implies that \replaced{each}{a}\marginnote{Review Q: Minor 20} bar $(b,d)\mapsto (b-v,d-v)$, but note that the height of the barcodes $d-b$ remains unchanged. Applying a vertical shift changes the heights of the minima and maxima, but not the differences in heights. Multiplying a function by $-1$, reflects the function over the $x$-axis so that maxima become minima and vice-versa. Again, the heights between extrema remain the same. Using the result from \ref{prop:discrete-symmetry-thm-circle}, we get that on a circular domain, applying the transformation $f\mapsto -f$ implies that $(b,d)\mapsto (-d,-b)$. Combining all these observations together we get the following:

\begin{align}
f\mapsto f(x)+v &\implies (b,d)\mapsto (b+v,d+v) \label{eqn:circular-transformation1} \\
f\mapsto -(f(x)+v) &\implies (b,d) \mapsto (-(d+v), -(b+v)) \label{eqn:circular-transformation2}
\end{align}

Additionally, horizontal shifts of the samples on a circular domain do not change the barcode.

We emphasize that the result in Equations \ref{eqn:circular-transformation1} and \ref{eqn:circular-transformation2} assumes a circular domain. More general results follow from order-preserving modifications of the dataset that are not constant, but discussion of this case will be omitted here.

\subsection{Using Dualities in Computation}

Utilizing symmetries and dualities can be advantageous in applications where summarizing data uses extrema. One specific example is with \marginnote{Review Q: Minor 21}\replaced{gene expression time series data}{gene expression data} that consists of measurements \deleted{of gene expression} at discrete time points. Capturing the temporal ordering of extremal events provides a coarse summary of the time series experiment and has been used for understanding biological systems like circadian rhythms and cell cycles \cite{BerryUsing20, CumminsModel18, SmithAn20}. The techniques used in \cite{BerryUsing20,belton2023extremal} represent time series data with directed graphs where the number of vertices is equal to the number of local minima and maxima. Furthermore, there are vertex weights or noise levels associated to these directed graphs that are related to values that come from sublevel set persistence. Understanding which symmetry properties arise from the data or gluing the time series to create a circular domain, could simplify these descriptors so that only the local minima (or local maxima) need to be considered.

\subsection{Comparison to Extended Persistence}

Extended Persistent Homology \cite{cohen2009extending}, using a Morse setting, extends persistence data with height functions defined over $\mathbb{R}$ by combining both the sublevel set and the superlevel set data. Extended homology differentiates between ordinary, relative and essential barcodes. Ordinary barcodes correspond (roughly) to sublevel set barcodes in our setting and relative barcodes correspond to superlevel set barcodes. In particular, the {\em essential} bar is dualized to connect the global minimum in the sublevel set with its dual global maximum in the superlevel set. Hence essential barcodes, which in classical persistence over $\mathbb{R}$ are infinitely extended then become finite, a property that helps among other things create finite distance metrics on persistence modules \cite{turner2024extended,bauer2024universal}.  

It is unclear if there is a principled way to translate the notion of an essential bar into our setting because there is no unique definition of the essential property without Morse-restrictions. We have seen this problem already in the discussion of barcode construction rules, requiring arbitrary tie-breakers (tribal councils) even in the case of the elder rule being used (see Figure \ref{fig:mergetreeelderlocal} and its discussion in Section \ref{sec:barcode}). Ad hoc tie-breaking rules or perturbation strategies \cite{cultrera2024dynamically} appear to cover the ambiguity we have discussed but do not contain information of the data to be analyzed. 

However, Propositions \ref{prop:trisection} and \ref{thm:subsuperdual} tell us that we can recover superlevel set filtrations from sublevel set filtrations.

\marginnote{Review Q: Minor 22}\deleted{Given that we work with finite level data, the finiteness condition that is appealing in the context of working with $\mathbb{R}$ which is not compact, is solved by construction. That is, finite bars are achieved by finiteness of the sets of levels.}

\added{The presence of infinite essential bars in sublevel and superlevel set filtrations are solved in our setting by construction. The global maxima and minima are associated with the empty and total set. Hence, one of the reasons that extended persistent homology is employed, which is to make all bars finite length, does not appear in this setting. It suggests that the discrepancy is a problem of the minimum and maximum being not treated dually. In our setting, the empty set dualizes to the total set. Hence, global minima and global maxima play a dual role.}

Furthermore, we have seen that the only data differing in merge tree and barcode representations between sublevel sets and superlevel sets in our setting are maxima at the boundary.

Hence, we get a pragmatic way to extend barcodes in our setting. We can always compute the dual by supplementing the boundary maxima. By construction,  all other extrema are encoded in the merge tree. Hence, to compute the dual merge tree, one takes the supplemental information of the level of the maxima, adds them to the set of dual minima as leafs of the dual merge tree and constructs the nodes from the dual maxima levels, excluding dual maxima that are in the boundary. If the barcode rule is local, then barcodes of the boundary maxima can simply be added to the dual locally, connecting to the local unique neighboring dual maximum, and boundary minima can be removed as they are local. Observe this behavior in Figure \ref{fig:cosine}, where the unchanged barcode is invariant and between the boundaries. Additionally, the change in bars correspond to the removal of one and the addition of the other. One could consider this to be a kind of {\em extended persistent homology without essentials}.

It should be mentioned that there is a known connection between extended persistence and level set zigzag persistent homology \cite{carlsson2009zigzag}, an approach that tracks homological changes by inclusions in opposing directions. While translation to a zigzag-type pattern is possible via \ref{fig:inclusionduality}, the correspondence is not straightforward. Original levelset zigzag homology assumes Morse functions and uses this property to separate ``regular" levels (intervals not containing critical points) from critical ones \cite{carlsson2009topology,dey2022computational}. No such distinction is made in our work. Each level corresponds to a map that is potentially multi-valued with respect to connected components and we resolve this by a subfiltration pattern (see Section \ref{subsec:withinlevelfiltration}.)
\subsection{Box Snakes}\label{sec:snakeboxes}

{\em Box Snakes} were introduced in \cite{essl2024deform} as a data structure that captures extrema as well as monotone sequences in a finite sequence. It resembles some of the properties of another recent data structure called windows \cite{biswas2023geometric}\footnote{This notion of window is not to be confused with the widely used concept of window in digital signal processing \cite{harris1978use}.}.

Arnold \cite{arnold1992calculus} coined the term {\em snake} to describe a discrete pattern of numbers that alternate in total order: $y[0]< y[1]> y[2]< \cdots> y[n-1]< y[n]$. This structure illustrates that minima and maxima necessarily alternate. A snake does not necessarily have to start with a minumum, it can also start with a maximum.

\begin{figure}[th]
\includegraphics[width=\columnwidth]{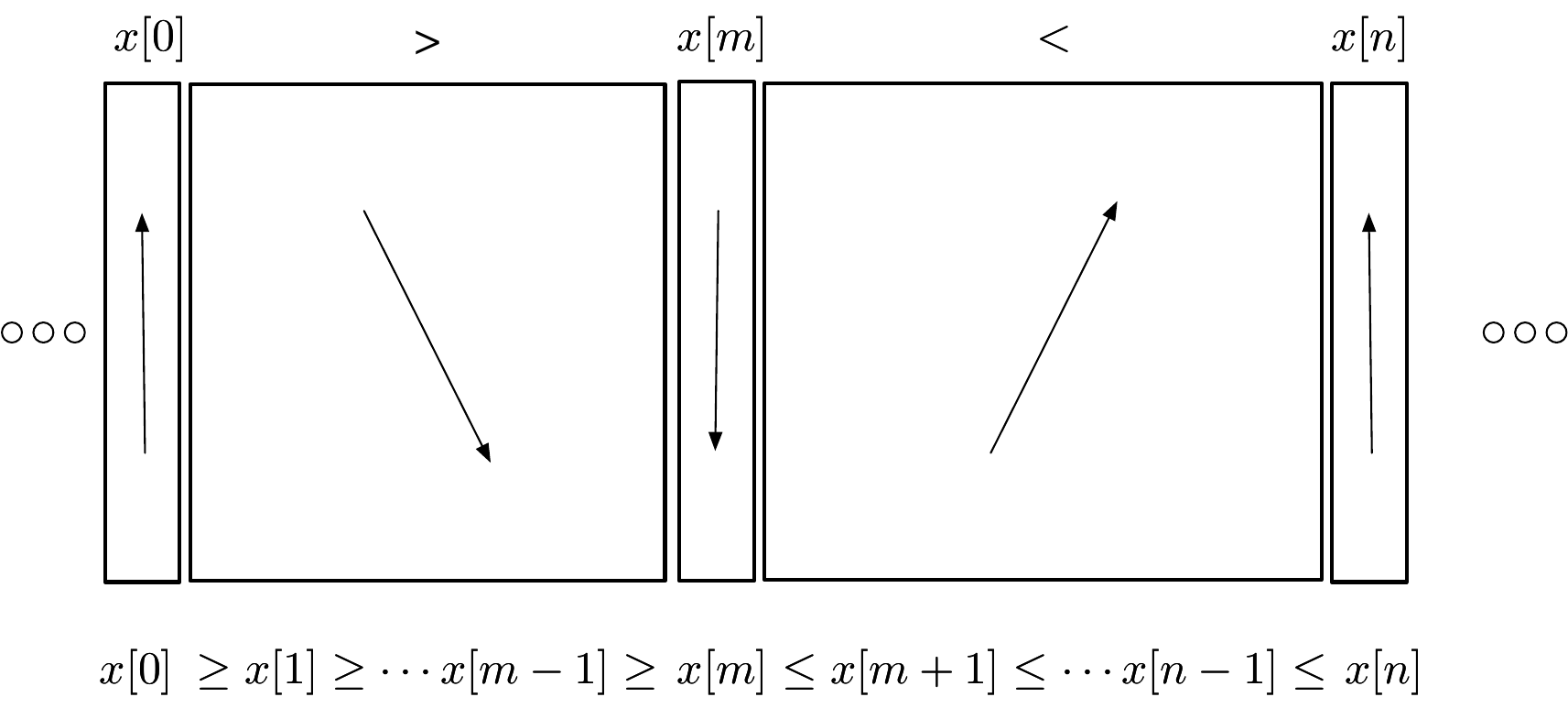}
\caption{The box snake structure of alternating extrema with monotone sequences between them (from \cite{essl2024deform}).}\label{fig:boxsnake}    
\end{figure}

A \emph{box snake} is a snake that allows additional samples between extrema that are required to be monotone. It has the form $x[0]< x[1]\leq \ldots \leq x[n-1]< x[n]> x[n+1]\geq \ldots \geq x[m-1] > x[m]$. The specific sequence of order operators is again arbitrary. The structure of a box snake is depicted in Figure \ref{fig:boxsnake}, where monotone segments can be omitted. If there are no monotones, then box snakes reduce to snakes. We consider snakes and box snakes both on linear and circular domains. Hence\added{,} we do not require the presence of boundaries. We call individual subdivisions in the box snake structure either a \emph{snake box} or simply a \emph{box}, and they can contain either an extremum or an individual monotone sequence.

Given that homological information only changes at extrema, box snakes contain the relevant information. In the original application, monotone boxes capture regions that can be monotonically edited without modifying the sublevel set persistent homology \cite{essl2024deform}. A {\em monotonical edit} is a change to a sample level that does not violate monotonicity of the sequence within an individual box. Hence\added{,} the subdivision of \marginnote{Review Q: Minor 23}\deleted{a} finite sequences into snake boxes contains both the information relevant for level set persistence, as well as a compact representation of monotone editability that keeps the homological information and their representations, such as the barcode, invariant.

Numerous properties of snake boxes are immediate. For extrema, there is no allowable variation in level, as this would violate invariance of barcodes. This makes snake boxes of extrema appear as lines in our graphical representations. For monotones, the levels allowable are defined by the extremes that neighbor the monotone.

Figure \ref{fig:boxsnakeexample} shows the snake boxes computed by the box snake algorithm \cite{essl2024deform}. The vertical boundaries of the boxes are given by the levels of adjacent extrema, if they contain monotones. Otherwise, they contain the level of the extremum.

\begin{figure}[ht]
    \begin{subfigure}[t]{.49\textwidth}
    \centering
    \includegraphics[width=0.91\textwidth]{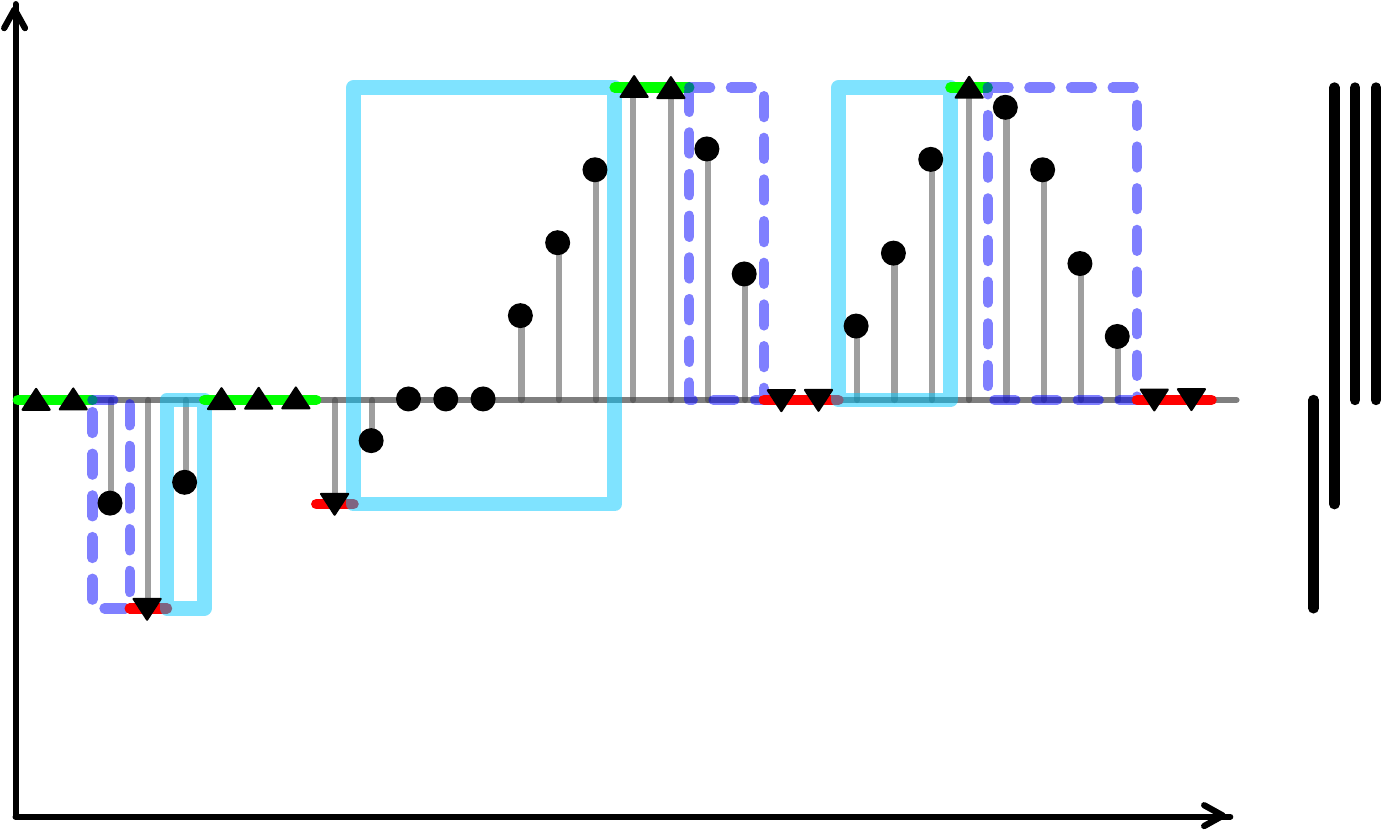}        \caption{}\label{subf:boxsnakeexampleA}
    \end{subfigure}
    \begin{subfigure}[t]{.49\textwidth}
    \centering
    \includegraphics[width=0.91\textwidth]{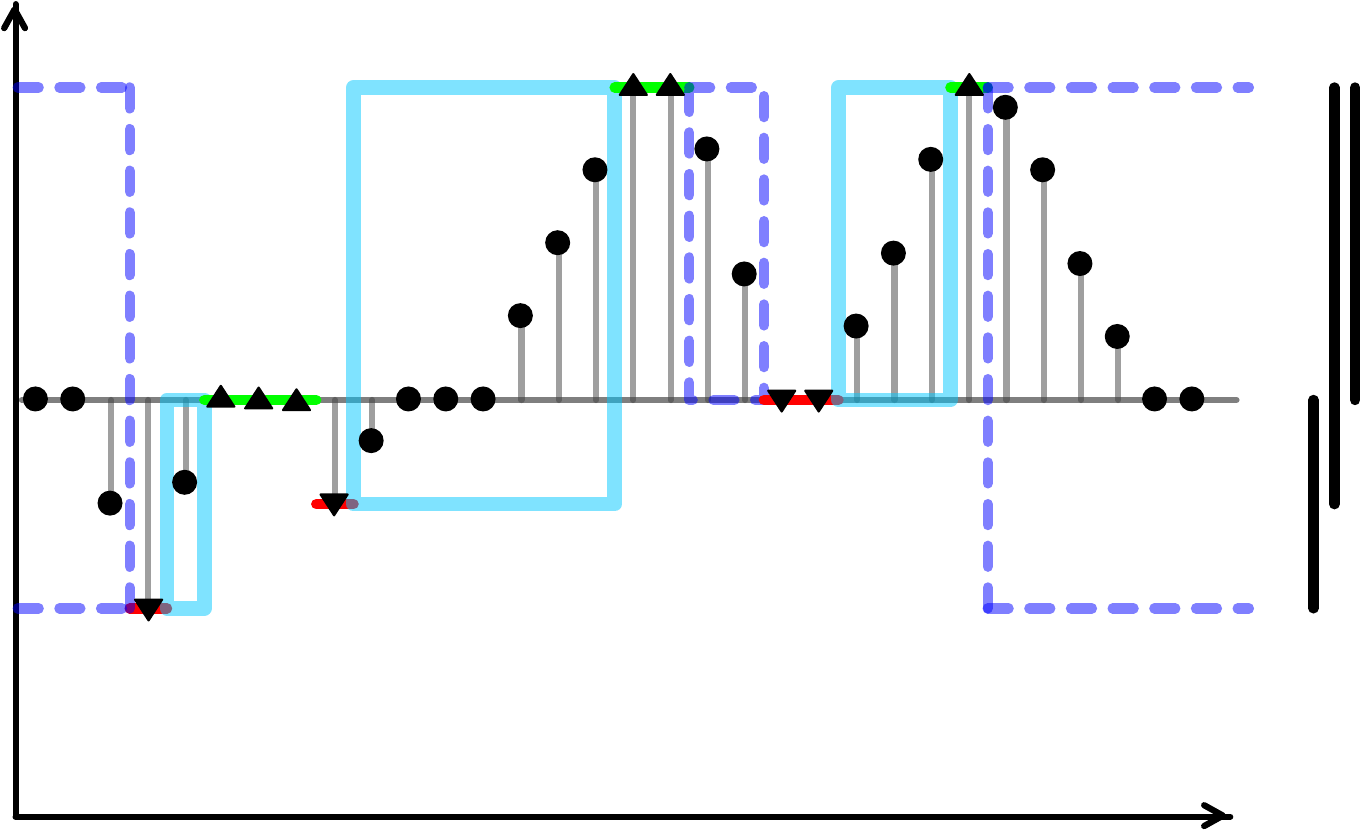}        \caption{}\label{subf:boxsnakeexampleB}
    \end{subfigure}
\caption{Box snakes computed with identical sample data for a linear domain \subref{subf:boxsnakeexampleA} and a circular domain \subref{subf:boxsnakeexampleB}. Ascending monotones (cyan \added{and} solid) are distinguished from descending monotones (blue \added{and} dashed), minima (red and $\blacktriangledown$) and maxima (green and $\blacktriangle$). Observe that monotone deformations inside monotone rectangles do not alter the barcodes. }\label{fig:boxsnakeexample}    
\end{figure}

\section{Surgery}\label{sec:surgery}
By {\em surgery}, we consider the process of removing or restoring the adjacency relationship between two neighboring points in the domain. An adjacency relationship consists of one successor relationship from sample to its neighbor $x[i+1]\succ x[i]$ together with the predecessor relationship between the same samples $x[i]\prec x[i+1]$. We notate an adjacency relationship $x[i]\succprec x[i+1]$.

We call the removal of an adjacency relationship {\em cutting} (or a {\em cut}). The restoration of an adjacency relationship is called {\em gluing}. Consider two neighboring samples $x[i]$ and $x[i+1]$. A cut between them will remove $x[i]$ as a predecessor of $x[i+1]$ and $x[i+1]$ is removed as a successor of $x[i]$ ($x[i]\succprec x[i+1] \Rightarrow x[i]\nsuccprec x[i+1]$). We glue the same samples together by creating these exact adjacency relationships when they have not previously existed $x[i]\nsuccprec x[i+1] \Rightarrow x[i]\succprec x[i+1]$.

We only consider two types of domains for surgery: linear and circular. In the case of linear domains, any cut between one linear domain will create two, now separate, linear domains. Any gluing of two separate linear domains creates one new linear domain. 

The second case is that of cutting a circular domain. Recall that a circular domain is a domain where each sample has one unique successor and predecessor. There are no boundary samples. If we cut a circular domain, we create a linear domain with the samples severed by the cut becoming boundary samples. If we glue boundary samples of a linear domain we create a circular domain. Hence, surgery gives us an operation to relate the two types of domains we are considering. \replaced{Cutting and glueing form inverse operations. If a domain is cut and the severed pieces are glued alone the pace where they were previously cut, that restores the original domain. Conversely, if two separate domains are glue together, and then are cut at the gluing location, the original two domains are restored.}{More generally a domain was cut and the same pieces are used to glue to restore the formerly severed adjacency relationship. Hence, gluing serves as an inverse of cutting (and vice versa)}\georg{Review Q: Minor 24}.

Our interest is the effect of surgery on extrema, and consequently on level\marginnote{Review Q: Minor 25}\added{ }set persistence of the finite sample sequence and associated structures such as barcodes and box snakes.
Consider a sequence $X:x[0,\ldots, N-1]$ of length $N$ and two disjoint subsequences $Y:y[0,\ldots,L-1]$, $Z:z[0,\ldots,M-1]$ of length $L$ and $M$ such that $N=L+M$. We define a glue operation 
\begin{align*}
Y\glue Z\rightarrow X: y[0,\ldots,L-1]&\succprec z[0,\ldots,M-1]\\
&\Rightarrow x[0,\ldots, L-1,L,\ldots,L+M-1]\\
&= x[0,\ldots, L-1,L,\ldots,N-1]
\end{align*}
where $\succprec$ adds the adjacency relation between the last sample of the first set and the first sample of the second. Observe that the index of the second set $z$ was adjusted to start at $L$ rather than $0$ in the gluing process.

The cut operation is thusly defined as
\begin{align*}
X\cut L-1: x[0,\ldots, L-1\cut L,\ldots, L+M-1]&=y[0,\ldots,L-1]\\
&\nsuccprec z[0,\ldots,M-1]
\end{align*}
where $\nsuccprec$ removes the adjacency relations between two adjacent samples in the sequence. Again we relabeled items in $x$ starting from $L$ to start their index from $0$. Intuitively these operations behave like concatenation of two arrays and splitting an array into two linearily ordered seperate subarrays that contain all elements of the original array.

The domain\marginnote{Review Q: Minor 26}\added{s} of linear and circular finite sequences have been studied extensively. The definition of cyclically ordered sets goes back to Huntington \cite{Huntington1916cyclic,huntington1924sets,huntington1935inter} and \v{C}ech \cite{Cech1969} and their surgery is developed by Novak \cite{novak1984cuts}. For a discussion of the case of surgery on partial to cyclic orders, see \cite{haar2016cyclic} and references therein.

\subsection{Changes to Extrema from Surgery}

\begin{figure}[th]
\begin{subfigure}{0.188\linewidth}
\includegraphics[width=\textwidth]{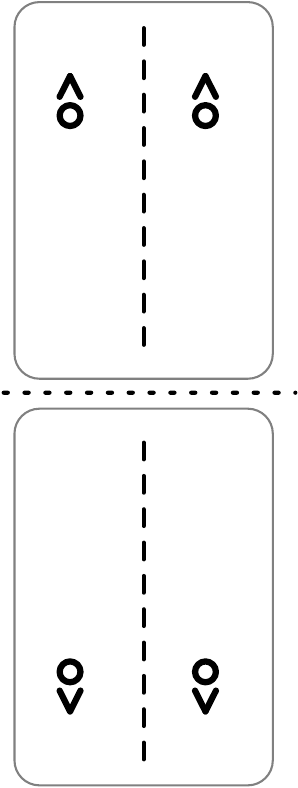}
\caption{Flat Extrema}\label{subfig:surgery-A}
\end{subfigure}
\begin{subfigure}{0.19\linewidth}
\includegraphics[width=\textwidth]{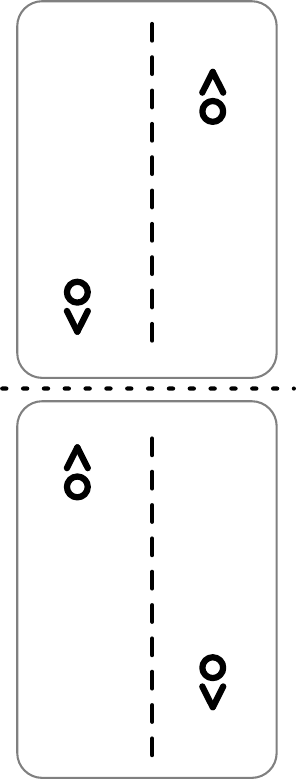}
\caption{Extrema}\label{subfig:surgery-B}
\end{subfigure}
\begin{subfigure}{0.19\linewidth}
\includegraphics[width=\textwidth]{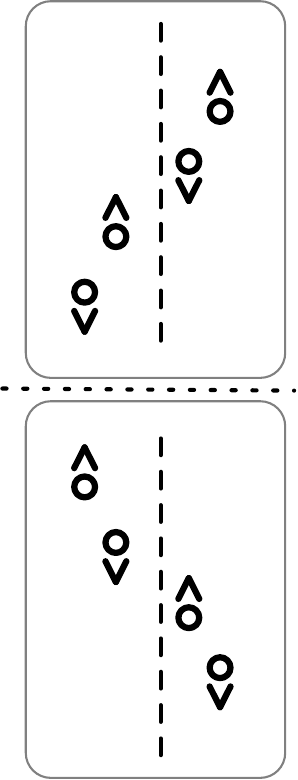}
\caption{Monotone}\label{subfig:surgery-C}
\end{subfigure}
\begin{subfigure}{0.373\linewidth}
\includegraphics[width=\textwidth]{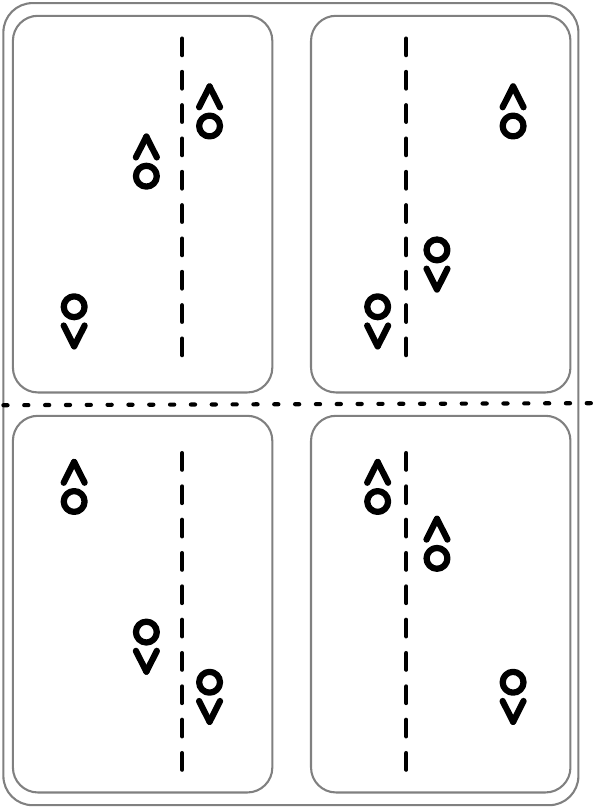}
\caption{Extrema and Monotone}\label{subfig:surgery-D}
\end{subfigure}

\caption{Surgery cases: \subref{subfig:surgery-A} Splitting a flat or merging at the same level, \subref{subfig:surgery-B} splitting between distinct extrema, or merging distinct extrema, \subref{subfig:surgery-C} splitting a monotone, or merging such that both extrema form a monotone, \subref{subfig:surgery-D} splitting between extremum and a monotone, or merging such that one extrema persists while the other is absorbed into a monotone. The up and down markers indicate directions of extrema. For cut operations, the directional indicators give the extrema after the cut. For gluing operations, the directional indicators give the configuration before gluing.}\label{fig:surgery}
\end{figure}

Consider a sequence $X$ of length $L$ denoted as $X:x[0,\ldots, L-1]$, and two disjoint subsequences $Y$,$Z$ of length $L$ and $M$ such that $N=L+M$. We denote these as $Y:y[0,\ldots,L-1]$ and $Z:z[0,\ldots,M-1]$. We have a surgery of gluing 
$$y[0,\ldots,L-1]\succprec z[0,\ldots,M-1]\Rightarrow x[0,\ldots\added{,}L-1,L\added{,}\ldots,L+M-1]$$

and the cut that reverses the gluing 

$$x[0,\ldots, L-1,L \ldots,L+M-1] \nsuccprec y[0,\ldots,L-1],z[0,\ldots,M-1].$$

A cut between samples necessarily creates boundary extrema. In principle, there are only four types of configurations: Cut within an extremal flat, cut between extrema that are not flat, cut in the interior of a monotone, and cut between an extremum and a monotone. These four cases are depicted in Figure \ref{fig:surgery}. The top and bottom row of the figure depict the dual. Given that a cut necessarily creates a boundary, the samples that will be at the boundary must be extrema. In the two left cases, these samples were already extrema before the cut, so there is no change. Only surgery involving monotones will create new boundary extrema. The two right cases in the figure show the nearest extremum after the monotone, and the newly created boundary extrema that follow the monotone.

We observe the following rule: A boundary extremum created on the {\em left} of a monotone surgery always points {\em in the direction of the monotone}, while a boundary extremum on the {\em right} of the said surgery always points {\em in the opposite direction of the monotone}. This rule allows us to determine the boundary extremum purely from the direction of the monotone. Thus, inspection of the nearest extremum is not necessary.

The table does not treat gluing and cutting with constant sequences separately. For example, if two constant functions are glued at the same level, this is captured by case (a) in Figure \ref{fig:surgery}. If two constant functions are glued at different levels, then we have case (b) and convert the constant function to the appropriate extremum based on how the two glued constant functions are ordered. Case (c) cannot occur, and case (d) occurs if a monotone function is glued with a constant one, and the same extrema assignment occurs as discussed for case (b).

\subsection{Changes to Barcodes from Surgery}
We discuss how cutting and gluing affects the barcode. All results in this section assume we are working with a finite discrete sequence of a totally ordered set $X:=x[0,\dots, N-1]$.

\subsubsection{Cut}

We can enumerate the cases of performing a cut and their homological effect from Figure \ref{fig:surgery}:

\begin{enumerate}
\item Cut between a minim\replaced{um}{a}\marginnote{Review Y: Minor 16} connected to a\added{n} \added{adjacent} maximum: The number of minima and maxima remain\added{s}\marginnote{Review Y: Minor 17} the same. Hence, $\#minima=\#maxima$.
\item Cut a monotone: A minimum and maximum are added at the boundary. We have that $\#minima=\#maxima$ and the number of bars increases by one, i.e., $\#bars+1$. 
\item Cut a minimum (or between a minimum an an adjacent monotone): The minimum splits into two minima at each boundary. This means that $\#minima=\#maxima+1$, and $\#bars+1$.
\item Cut a maximum (or between a maximum and an adjacent monotone): The maximum splits into two maxima. We find that $\#maxima = \#minima+1$.
\end{enumerate}

\begin{observation}[Increasing the Number of Bars]
The number of bars increases by one precisely when a minimum is created by the surgery.
\end{observation}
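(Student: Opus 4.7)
The plan is to reduce the statement to the bijection between $H_0$ bars and minima established by Proposition \ref{prop:min}. Every minimum (isolated, flat, interior, or boundary) creates exactly one connected component at its level and hence gives rise to exactly one bar in the sublevel set barcode. Conversely, a new bar cannot be born at any other configuration: not at maxima, which by Proposition \ref{prop:max} only merge components, nor along monotones, which only grow existing components. Consequently, the total number of bars equals the total number of minima, so the change in the bar count induced by a surgery equals the change in the number of minima.

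With this identity in hand, I would verify the observation case by case using the four surgery types enumerated just above and depicted in Figure \ref{fig:surgery}. In case (1) the cut separates a pre-existing minimum from a pre-existing maximum; both remain extrema of the same type after being placed at the boundary, so the count of minima is unchanged. In case (2) cutting inside a monotone creates one boundary minimum and one boundary maximum (by the orientation rule stated earlier in this section), so the count of minima increases by one. In case (3) cutting through (or at the edge of) a minimum splits it into two minima, one retained on each side of the cut, again increasing the count by one. In case (4) cutting through a maximum creates two boundary maxima but no new minimum, so the count is unchanged. In each case the number of bars changes by exactly the number of newly created minima.

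The main obstacle is confirming that surgery does not silently disturb bars away from the cut. Because surgery alters only adjacency relations and leaves all sample levels intact, every minimum disjoint from the cut site remains a minimum and its associated bar is preserved; connected components in the filtration that do not cross the cut are likewise preserved since their constituent adjacencies are untouched. At the cut site, the case analysis confirms that pre-existing minima are either retained verbatim (cases (1) and (4)) or split into two (case (3)), while only case (2) introduces a genuinely new boundary minimum out of a monotone. In every case the net change in bar count equals the net change in the number of minima, which is either zero or one, establishing the observation.
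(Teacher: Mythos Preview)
Your proposal is correct and follows the same route as the paper: the observation is simply read off from the four-case enumeration of cuts immediately preceding it, and the paper offers no further argument. Your explicit use of Proposition~\ref{prop:min} to identify the number of bars with the number of minima is a clean way to package the underlying reason, which the paper leaves implicit in its case list.
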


How a cut affects the barcode depends on the barcode construction principle used. For the local barcode construction rules discussed earlier, any cut will at most sever one bar. Surgery for barcodes using a non-local rule such as those following the elder rule is more complicated. A continuous version of this problem has been studied in \cite{cultrera2024dynamically}.

\subsubsection{Glue}

Gluing is the inverse operation of the cut in the previous section. Hence, instead of potential increases in extrema, we get corresponding decreases. As a consequence we get the following Lemma:

\begin{lemma}[Gluing when the Direction of Extrema Agree]
If a boundary is glued where extrema at the boundary are of the same type (both minima or both maxima), then the number of that extrema in the glued domain is reduced by one.
\end{lemma}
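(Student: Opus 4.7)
The plan is to treat this lemma as the inverse of the cut enumeration given just above. First I would identify which of the four cut cases produce two boundary extrema of the same type. Inspecting cases (1)--(4), only case (3) (cutting a minimum, or cutting between a minimum and an adjacent monotone) yields two boundary minima, and only case (4) yields two boundary maxima. The boundary-extremum direction rule recalled earlier -- a boundary extremum on the left of a monotone-involving cut points in the direction of the monotone, and on the right it points in the opposite direction -- ensures that these are exactly the gluing configurations in which the matched boundary samples are of the same extremum type.

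Next I would verify the count change directly from the adjacency criterion for extrema that is built into the level-set framework of Proposition~\ref{prop:trisection}. Suppose both boundary samples are minima, at levels $a$ and $b$ with $a \le b$. Once the adjacency $\succprec$ is installed between them, the sample at level $a$ still has all neighbors at levels $\ge a$ (strictly greater on the non-glued side by definition of a boundary minimum), so it remains a minimum. The sample at level $b$, when $a<b$, now has a neighbor at the strictly lower level $a$; by the classification of flats from outer boundaries it lies on an increasing monotone and is no longer an extremum. When $a=b$, the two boundary samples merge into a single flat according to the flat convention (maximal constant subsequence bounded by differing values), so the former two minima become one flat minimum. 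In either subcase the minima count drops by exactly one. The argument for two boundary maxima is formally identical under the order-inversion duality of Proposition~\ref{prop:trisection}.

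The main obstacle I expect is not combinatorial but bookkeeping around the ``or between an extremum and an adjacent monotone'' parenthetical in cut cases (3) and (4): one must verify that, under inversion, every monotone-derived boundary minimum glued to a genuine boundary minimum still loses the extremum at the higher level by the same adjacency argument, regardless of whether the retained minimum is isolated or flat. This is handled uniformly by the observation that the loss of extremum status depends only on acquiring a strictly lower neighbor through the glue, and the direction-of-monotone rule guarantees this is the only configuration that can arise when the two boundary extrema already agree in type.
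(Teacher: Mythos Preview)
Your proposal is correct and follows essentially the same case analysis as the paper. The paper's own argument is extremely terse: it simply points to Figure~\ref{fig:surgery} and observes that in case~(a) the two same-type boundary extrema merge into a single flat extremum, while in case~(d) one of them is absorbed into the adjacent monotone. Your cut cases~(3) and~(4) are exactly the inverses of those two figure cases, and your level-based verification (distinguishing $a=b$ from $a<b$) makes explicit what the paper leaves to the picture. One small wording issue: in the $a<b$ subcase you conclude the higher sample ``is no longer an extremum''; if the glued piece at level $b$ happens to be a singleton with no further right neighbor, that sample becomes a boundary \emph{maximum} rather than a monotone point, so it is still an extremum---just not a minimum. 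The count of minima still drops by one, so the lemma is unaffected, but you should phrase the conclusion as ``is no longer a minimum'' rather than ``is no longer an extremum''.
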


This is illustrated in Figure \ref{fig:surgery}. In case (a), the two extrema will merge into one. In case (d), one of the extrema is absorbed into a new monotone.

\begin{lemma}[Gluing when the Directions of Extrema Disagree]
If a boundary is glued where extrema at the boundary are of different types (one is a mimimum and the other is a maximum), then the total number of extrema in the glued domain either stays the same as the sum of glued subdomains (case (b) in Figure \ref{fig:surgery}) or decreases by one each (case (c) in Figure \ref{fig:surgery}).
\end{lemma}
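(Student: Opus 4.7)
The plan is a case analysis on the relative levels of the two boundary extrema at the seam, using the flat and outer-boundary characterization of extrema developed earlier. Without loss of generality, assume the right boundary $y[L-1]$ of $Y$ is a boundary maximum at level $a$ and the left boundary $z[0]$ of $Z$ is a boundary minimum at level $b$; the mirror configuration follows by the same argument with roles exchanged.

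First I would record what the boundary classifications already tell us inside each subdomain. Because $y[L-1]$ is a boundary maximum, the flat of $Y$ containing $y[L-1]$ has its one inner outer boundary at some level $<a$. Dually, the flat of $Z$ containing $z[0]$ has its one inner outer boundary at some level $>b$. The gluing only introduces the new adjacency $y[L-1]\succprec z[0]$ and leaves every other adjacency intact, so these outer boundaries are unaffected away from the seam.

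Next I would split into $a>b$ and $a\leq b$. If $a>b$, the levels differ, so the two boundary flats stay separate after gluing. The flat of $y[L-1]$ now has a left outer boundary of level $<a$ and the new right outer boundary $z[0]=b<a$, so both outer boundaries lie in $L_{<a}$ and $y[L-1]$ remains an (interior) maximum. Symmetrically $z[0]$ remains an interior minimum, and the total extremum count is preserved, producing case (b). If $a<b$, the flats still stay separate, but now the flat of $y[L-1]$ has left outer boundary in $L_{<a}$ and right outer boundary $z[0]=b\in L_{>a}$. By the trisection of Proposition \ref{prop:trisection} these outer boundaries lie on opposite sides of $L_a$, so the flat becomes part of a monotone and is no longer an extremum; the same reasoning shows $z[0]$ is absorbed into a monotone as well. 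If $a=b$, the boundary flats from $Y$ and $Z$ merge into a single flat at level $a$ whose left outer boundary is inherited in $L_{<a}$ and whose right outer boundary is inherited in $L_{>a}$, so the same trisection argument classifies the merged flat as a monotone. In both subcases of $a\leq b$ exactly one minimum and one maximum disappear, yielding case (c).

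The main obstacle is the subcase $a=b$, where the boundary flats genuinely combine across the seam; the point to verify is that the outer boundaries of the merged flat are inherited unchanged from each side, so that the trisection argument still applies verbatim. Once that is in place, the dichotomy $a>b$ (case (b)) versus $a\leq b$ (case (c)) directly produces the two alternatives stated in the lemma.
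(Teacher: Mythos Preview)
Your proposal is correct, and in fact supplies substantially more than the paper itself does. The paper does not give a formal proof of this lemma; it simply states ``We see from Figure~\ref{fig:surgery} that in case~(c) the boundary extrema are both absorbed into a monotone, and hence, are no longer extrema in the glued domain,'' treating the claim as a direct reading of the surgery classification picture. Your case split on the relative levels $a>b$, $a<b$, $a=b$ of the two boundary extrema, combined with the outer-boundary characterization of flats and Proposition~\ref{prop:trisection}, is exactly the argument that makes that picture rigorous, and it matches the paper's informal dichotomy (case~(b) when ``the maximum in the boundary is higher than the minimum in the boundary,'' case~(c) otherwise) articulated later in the box-snake surgery discussion.

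One small point worth tightening: your WLOG places the maximum on the $Y$ side and the minimum on the $Z$ side, and you then only explicitly treat the subcase $a=b$ as the ``main obstacle.'' It would be cleaner to note up front that assuming a definite boundary type (maximum but not also minimum, and vice versa) already guarantees the inner outer boundary exists on each side, so constants are excluded and the inherited-outer-boundary step in the $a=b$ subcase goes through without further comment. With that caveat, your argument is complete and is the natural formalization of what the paper leaves to the figure.
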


We see from Figure \ref{fig:surgery} that in case (c) the boundary extrema are both absorbed into a monotone, and hence, are no longer extrema in the glued domain. As a result of these lemmas, we can know what kind of gluing configuration was encountered based on the changes in the number of extrema.

\begin{corollary}[Gluing Retaining the Number of Extrema]
If the boundary extrema remain extrema after gluing them together, then the number of extrema before gluing is the same as the number of extrema after gluing (case (b) in Figure \ref{fig:surgery})).
\end{corollary}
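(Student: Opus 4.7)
The plan is to derive the corollary directly from the two preceding lemmas by exhaustive case elimination over the four configurations in Figure \ref{fig:surgery}. First I would note that any gluing of two boundary samples falls into exactly one of cases (a)--(d), since the boundary samples of each glued piece are necessarily extrema (by the convention established in the discussion of cuts), and either they agree in type or they disagree, and either their levels produce a monotone across the glue or they do not.

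Next I would apply the two lemmas to exclude all cases except (b). By the first lemma (agreeing directions), case (a) reduces the count of extrema by one because the two same-type boundary extrema merge into a single flat extremum at the glue point; so neither original extremum persists as an independent one. By the second lemma (disagreeing directions), case (c) absorbs \emph{both} boundary extrema into a single monotone segment, and case (d) absorbs one boundary extremum into a monotone while leaving the other adjacent extremum (which was not a boundary extremum in the pre-glue sense) intact. In all three of (a), (c), (d), at least one of the boundary extrema ceases to be an extremum after the glue, which contradicts the hypothesis. Thus the hypothesis forces configuration (b).

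Finally, I would verify that in case (b) the total count is unchanged. Here the two boundary extrema are of opposite types and the glue neither places them at the same level nor makes them form a monotone with their interior neighbors; they simply become interior extrema adjacent to each other in the new, longer sequence. No new extrema are created anywhere else in the glued domain (the interior of each piece is untouched), and no extrema are destroyed. Hence the total number of extrema after gluing equals the sum over the two pieces, i.e., the total number before gluing.

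The main obstacle is a bookkeeping one rather than a mathematical one: ensuring the dichotomy ``boundary extremum remains an extremum'' versus ``boundary extremum is absorbed'' is airtight in case (a), where a merged flat extremum occupies the same location. I would handle this by being explicit that ``remains an extremum'' refers to the individual pre-glue boundary sample retaining its extremal classification after gluing, so that the two boundary samples of case (a) coalescing into a single flat extremum counts as a loss of one extremum rather than as both persisting.
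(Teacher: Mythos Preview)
Your approach matches the paper's: the corollary is stated there without proof, as an immediate consequence of the two preceding lemmas together with the case enumeration in Figure~\ref{fig:surgery}, which is exactly the exhaustive elimination you carry out.

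Two minor corrections. First, your parenthetical in case~(d) is off: the extremum that persists after gluing \emph{was} a boundary extremum before the glue. In case~(d) both glued endpoints are boundary extrema of the same type; one is absorbed into the adjacent monotone and the other remains. This does not affect your argument, since at least one boundary extremum still fails to persist, contradicting the hypothesis. Second, your bookkeeping resolution for case~(a) is not quite the right framing. In the paper's sense an extremum is a maximal flat, so in case~(a) the two boundary flats at the same level coalesce into a \emph{single} flat extremum; each individual sample does retain its extremal classification, so your stated criterion would not exclude~(a). The cleaner reading is that ``the boundary extrema remain extrema'' means they remain as two \emph{distinct} extrema after gluing, which fails in~(a) because the two flats merge into one. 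Your conclusion stands, but the justification should be sharpened accordingly.
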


\begin{corollary}[Gluing Reducing the Number of Extrema]
If the number of extrema in the glued domain has one less minimum and maximum from the number of extrema before gluing, then the boundary was glued into a monotone (case (c) in Figure \ref{fig:surgery}).
\end{corollary}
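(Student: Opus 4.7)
The plan is a straightforward converse-by-exhaustion argument. Since the preceding Lemmas and Corollary already classify the four possible gluing configurations (cases (a)--(d) of Figure \ref{fig:surgery}) in terms of how the total extrema counts change, I simply need to show that among those four cases only (c) produces the joint decrease of one minimum and one maximum.

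First, I would enumerate the four cases and record the effect on the counts $\#\text{min}$ and $\#\text{max}$ of the glued domain as compared with the sum of the two subdomains before gluing. Case (a) merges two minima (or two maxima) at the boundary into a single extremum, so exactly one of $\#\text{min}$ or $\#\text{max}$ decreases by one while the other is unchanged. Case (b) glues a minimum against a maximum but the adjacent interior samples keep each of them as a local extremum of its own type, so both counts are unchanged. Case (d) glues an extremum against a monotone, which absorbs the extremum on the monotone side into the extended monotone while the opposite extremum at the boundary persists, so exactly one of $\#\text{min}$ or $\#\text{max}$ decreases by one. Case (c) glues two monotones of the same direction so that both boundary extrema become interior points of a single monotone; by the rule established in the discussion of Figure \ref{fig:surgery} (that left and right boundary extrema of a monotone surgery point in opposite directions), these two boundary extrema must be one minimum and one maximum, so both counts decrease by one.

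Next, I would invoke the hypothesis of the corollary, namely that both $\#\text{min}$ and $\#\text{max}$ decrease by exactly one. Cases (a) and (d) are ruled out because only one count changes, and case (b) is ruled out because neither count changes. Therefore the gluing must be of type (c).

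The only subtle step is verifying that no hidden fifth possibility exists, e.g.\ gluing a flat maximum on one side against a flat minimum on the other in such a way that both extrema simultaneously disappear without being part of a genuine monotone. The resolution is that Figure~\ref{fig:surgery} already exhausts the local configurations at a cut/glue location: the sample immediately inside each boundary is either strictly higher, strictly lower, or equal in level to the boundary sample, which combined across the two subdomains yields precisely the four depicted cases once one identifies dual labels. This exhaustiveness is the main (but mild) obstacle, and I would make it explicit by a short case analysis on the sign of $y[L-1]-y[L-2]$ and $z[1]-z[0]$ (with equalities handled by the flat convention of Figure~\ref{subfig:surgery-A}).
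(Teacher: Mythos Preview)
Your proposal is correct and follows essentially the same approach as the paper. The paper does not give an explicit proof of this corollary; it is stated as an immediate consequence of the two preceding Lemmas (on gluing when boundary extrema agree versus disagree), which together partition the four cases (a)--(d) and record exactly the extrema-count changes you tabulate, so your exhaustion argument is precisely the intended reasoning made explicit.
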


Observe that only the case of a decrease in extrema by one is not uniquely defined. This can be due to identical extrema being glued (case (a) in Figure \ref{fig:surgery}) and no new monotone being formed. Or it can be due to a new monotone being formed next to an extremum (case (d) in Figure \ref{fig:surgery}). These are distinguished by whether the glue samples shared the same level or not.

\begin{remark}[Sufficient Conditions for Counting the Number of Bars.]
    Notice that, if you check the directions of glue points and check if what you are gluing resulted in a monotone, then we can determine how the number of bars changes.
\end{remark}

\begin{figure}[htb]
\rotatebox{90}{\footnotesize \qquad\quad Glued}\hspace{2pt}
  \begin{subfigure}{0.155\linewidth}
    \includegraphics[width=\linewidth]{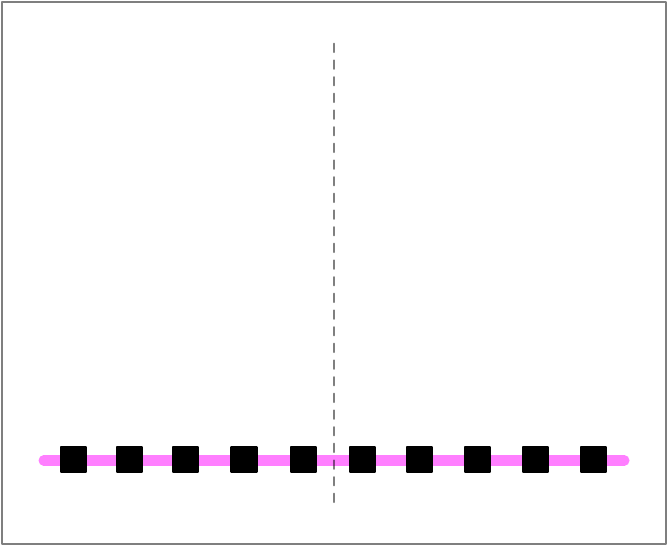}
    \caption{
    } \label{subfig:surgery:const-merged}
  \end{subfigure}
  \begin{subfigure}{0.155\linewidth}
    \includegraphics[width=\linewidth]{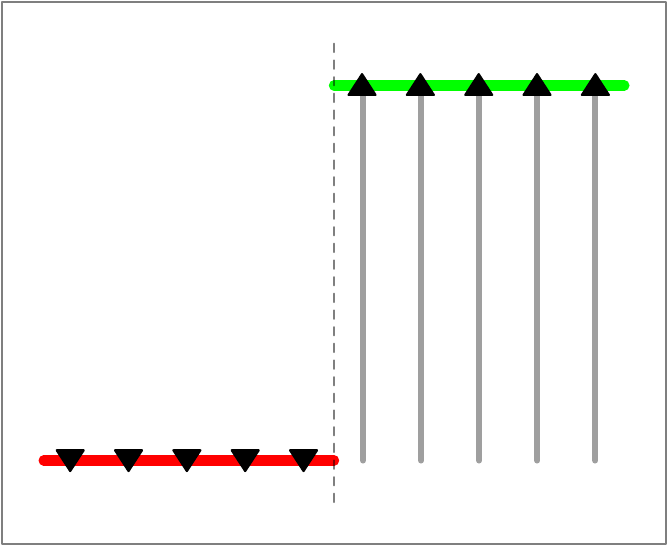}
    \caption{
    } \label{subfig:surgery:step-merged}
  \end{subfigure}
  \begin{subfigure}{0.155\linewidth}
    \includegraphics[width=\linewidth]{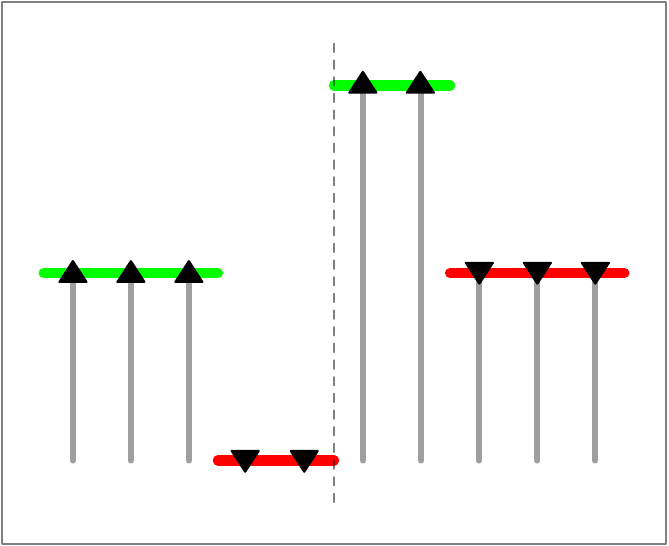}
    \caption{
    } \label{subfig:surgery:eediff-merged}
  \end{subfigure}
  \begin{subfigure}{0.155\linewidth}
    \includegraphics[width=\linewidth]{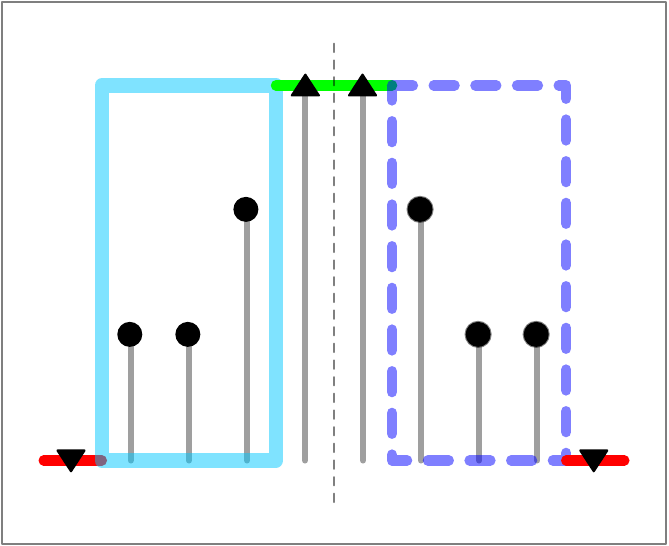}
    \caption{
    } \label{subfig:surgery:eesame-merged}
  \end{subfigure}
  \begin{subfigure}{0.155\linewidth}
    \includegraphics[width=\linewidth]{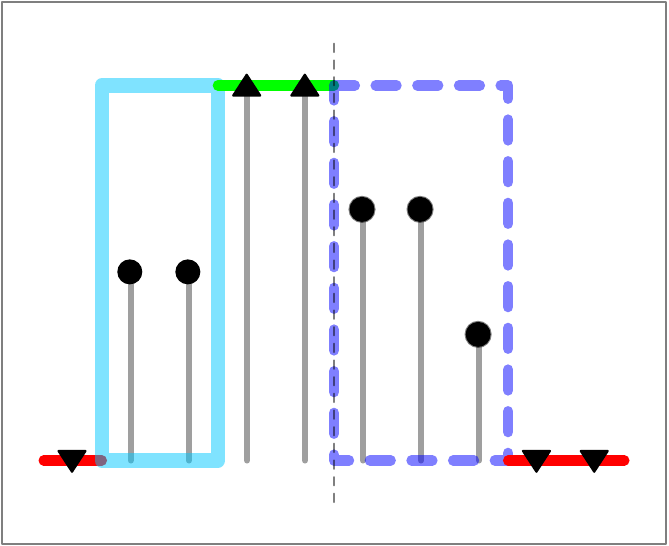}
    \caption{
    } \label{subfig:surgery:em-merged}
  \end{subfigure}
  \begin{subfigure}{0.155\linewidth}
    \includegraphics[width=\linewidth]{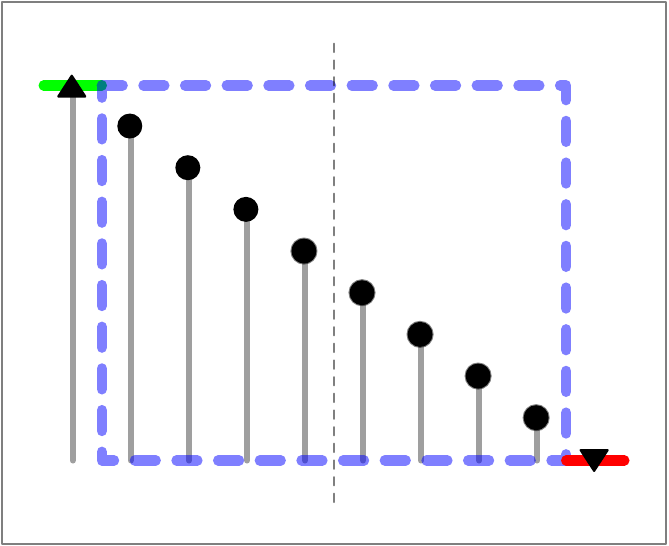}
    \caption{
    } \label{subfig:surgery:monotone-merged}
  \end{subfigure}
    \\
\rotatebox{90}{\footnotesize \qquad\quad Cut}\hspace{2pt}
  \begin{subfigure}{0.155\linewidth}
    \includegraphics[width=\linewidth]{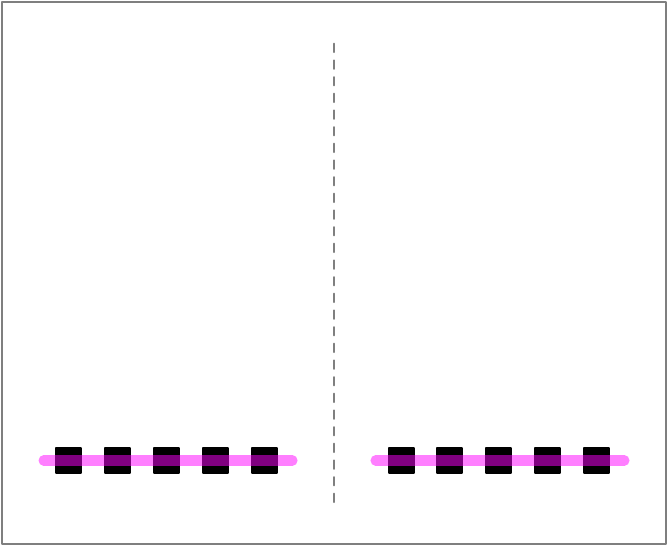} 
    \caption{
    } \label{subfig:surgery:const-split}
  \end{subfigure}
  \begin{subfigure}{0.155\linewidth}
    \includegraphics[width=\linewidth]{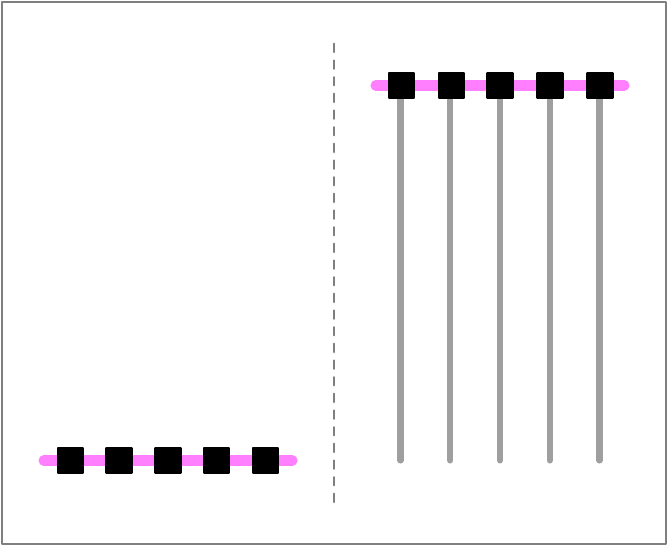}
    \caption{
    } \label{subfig:surgery:step-split}
  \end{subfigure}
  \begin{subfigure}{0.155\linewidth}
    \includegraphics[width=\linewidth]{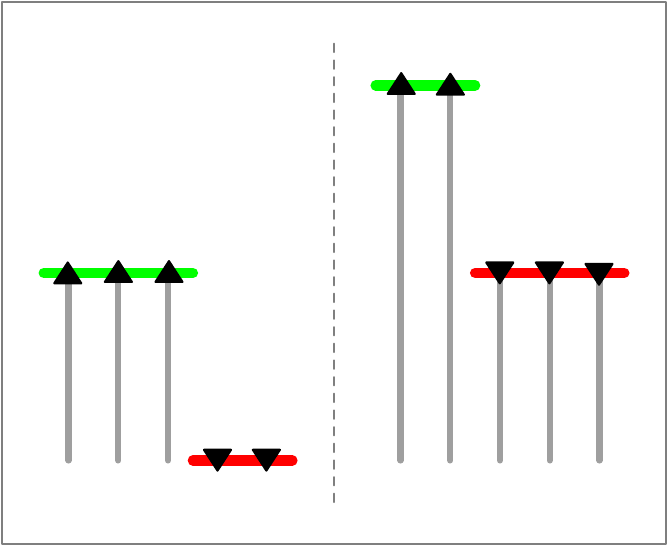}
    \caption{
    } \label{subfig:surgery:eediff-split}
  \end{subfigure}
  \begin{subfigure}{0.155\linewidth}
    \includegraphics[width=\linewidth]{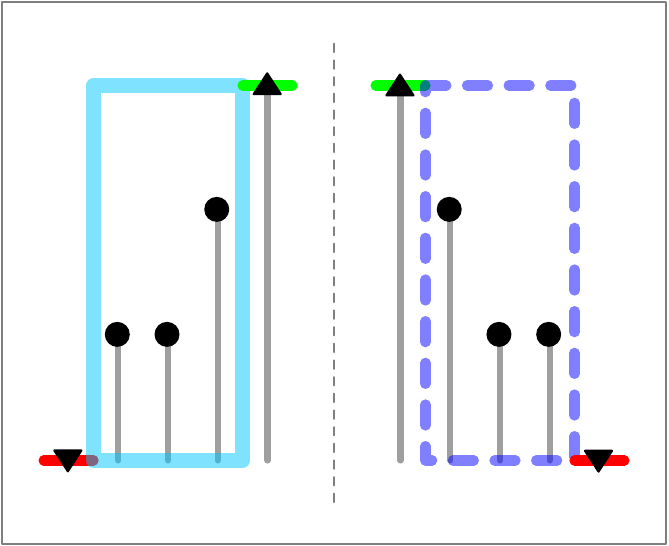}
    \caption{
    } \label{subfig:surgery:eesame-split}
  \end{subfigure}
  \begin{subfigure}{0.155\linewidth}
    \includegraphics[width=\linewidth]{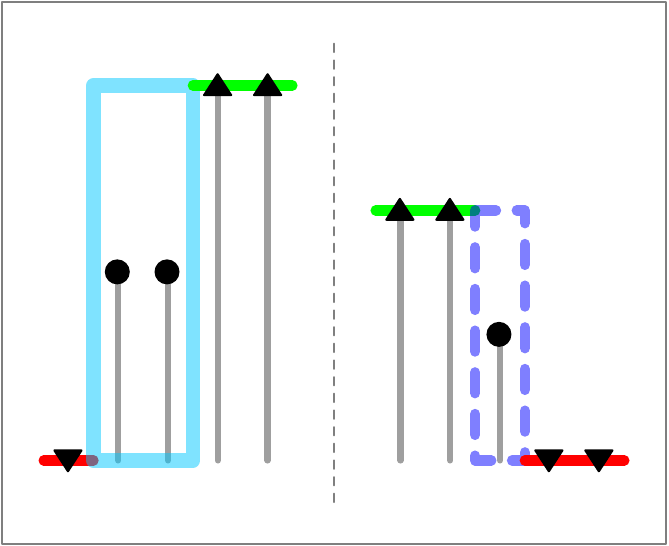}
    \caption{
    } \label{subfig:surgery:em-split}
  \end{subfigure}
  \begin{subfigure}{0.155\linewidth}
    \includegraphics[width=\linewidth]{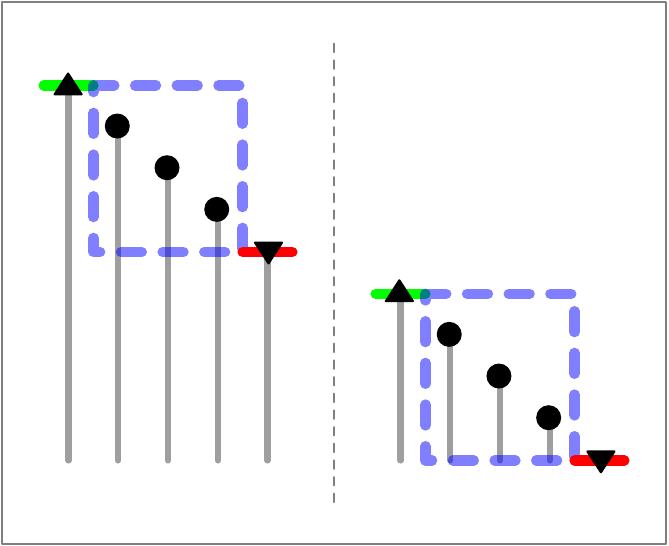}
    \caption{
    } \label{subfig:surgery:monotone-split}
  \end{subfigure}
  \caption{Classification of surgery with changes of box snakes. Top row \subref{subfig:surgery:const-merged}-\subref{subfig:surgery:monotone-merged} shows glued cases, and bottom row \subref{subfig:surgery:const-split}-\subref{subfig:surgery:monotone-split} shows cut cases. Colors indicate the type of box snake. Purple and $\blacksquare$ is a constant (both minimum and maximum), red and $\blacktriangledown$ is a minimum, green and $\blacktriangle$ is a maximum, cyan solid is an ascending monotone, blue dashed is a descending monotone. The dashed vertical line indicates the surgery position.}\label{fig:boxsnakesurgery}
\end{figure}

\subsection{Box Snake Surgery}

The changes of extrema due to surgery directly translate into surgery rules for box snakes. All pertinent cases are shown in Figure \ref{fig:boxsnakesurgery}. The top row shows the glued condition, whereas the bottom row shows the cut condition.
\robin{There is an inconsistency between box snake and snake box. Is one preferred over the other?}\georg{I use box snake for the whole structure, and snake box for a single one. See \ref{sec:snakeboxes}. I guess what is confusing here is that some surgeries are inside one snake box perhaps creating more and some involve multiple snake boxes, perhaps justifying the use of box snake.}

\deleted{We describe all cases as cuts, though they can be inverted going in the direction of gluing if the levels line up as given by the cut. The cases are (from left to right):}

\marginnote{Review Z: Major 1,2}
\addedbegin
We describe all cases as cuts with regard to the modifications on box snakes (from left to right): 
\begin{itemize}
\item A constant set \subref{subfig:surgery:const-merged} $\cut$ two constants \subref{subfig:surgery:const-split}. Given that before the cut all data are captured in one box snake, a second box snake needs to be created to allow for two separated box snakes after the cut. However, the content of the box snakes remain unchanged since they are capturing a constant set.
\item A step level set \subref{subfig:surgery:step-merged} $\cut$ two constants \subref{subfig:surgery:step-split}. Given that the step is already segmented into two separate snake boxes, we merely need to separate them but no generation of a new snake box is necessary. However, the content of the box snakes change, as they are now two constants. We can identify if a snake box must be a constant by noting if it is the only snake box in a connected box snake sequence. This criterion also applies to the next case if one side happens to be an extremum in the boundary of the box snake sequence.
\item Two differing extrema \subref{subfig:surgery:eediff-merged} $\cut$ two separated extrema that have opposite orientation (minima $\leftrightarrow$ maximum) \subref{subfig:surgery:eediff-split}.  This case is easy to handle because one merely needs to separate between already existing box snakes. Per the remark above, one needs to follow by a check if one side has been reduced to only one snake box, in which case it is a constant. This case is not explicitly shown in Figure \ref{fig:boxsnakesurgery}.
\item Within a flat extremum \subref{subfig:surgery:eesame-merged} $\cut$ two separated extrema that have the same orientation (minima $\leftrightarrow$ minima or maxima $\leftrightarrow$ maxima) \subref{subfig:surgery:eesame-split}. Given that the flat extremum is only captured by one snake box, we need to generate a second one to allow the separation.
\item Between an extremum and a monotone \subref{subfig:surgery:em-merged} $\cut$  two separate extrema of the same orientation and a shortened or monotone (the monotone can disappear if the original monotone contained only one sample) \subref{subfig:surgery:em-split}. In this case, we are cutting between existing snake boxes. However, after the cut, the samples that were separated must be boundary extrema. Yet, before the cut, only one was an extremum. Therefore, all flat samples next to the cut in the monotone need to be captured in a new snake box that is an extremum. The orientation of the extremum is dictated by the direction of the extremum on the other side, or equivalently by the direction of the monotone on the side of the cut. The monotone that was shrunk by this creation of the extremum either disappears, if it no longer contains any samples, or is adjusted to match the new boundary extremum (observe how the monotone in \subref{subfig:surgery:em-merged} is larger and contains a wider range of levels than the reduced monotone \subref{subfig:surgery:em-split}, which is now arrowed by the width of the new extremum and also has its levels reduced.
\item Within a monotone \subref{subfig:surgery:monotone-merged} $\cut$ two extrema of opposite orientation with shortened monotones (vanishing on either side if the size is zero) \subref{subfig:surgery:monotone-split}. In a sense, this is similar to the previous case, except that samples in the boundaries need to be captured by new extrema on both sides, instead of just one. Monotones are reduced and level adjusted if capturing any samples. Otherwise, the associated snake box is removed.
\end{itemize}

All cases of gluing are reversed in the cut steps. As we have seen, snake boxes were created, but can also disappear in cases where monotones are completely converted to extrema. Hence, we should expect to undo and merge box snakes, remove extrema, but on occasion create snake boxes for monotones. The type of merge that must be performed can be inspected from the bottom row of Figure \ref{fig:boxsnakesurgery}. Each case is defined by the relative type and position of boundary extrema as we have observed in Figure \ref{fig:surgery}, including constant level sets. Specifically, all cases of gluing in the same order as the cases of cutting discussed above are:
\begin{itemize}
\item Two constant sets at the same level \subref{subfig:surgery:const-split}  $\glue$ one constant \subref{subfig:surgery:const-merged}. One snake box is removed and the single snake box now spans the merged data, which remains a constant.
\item Two constant sets at different levels \subref{subfig:surgery:step-split} $\glue$ one step level set \subref{subfig:surgery:step-merged}. The first snake box is connected to the second snake box. Depending on the direction of the level difference, they are converted into opposing extrema, where the higher level is the maximum.
\item Two differing extrema where their relative direction matches their extrema type (e.g. the maximum in the boundary is higher than the minimum in the boundary) \subref{subfig:surgery:eediff-split}  $\glue$ two extrema that have opposite orientation. One only needs to connect the previously disconnected snake boxes.
\item To equally oriented maxima at the same level \subref{subfig:surgery:eesame-split} $\glue$ one merged maximum where both boundary extrema are combined into one flat extremum of the given type.
\item Two extrema of the same type but at different levels \subref{subfig:surgery:em-split}\ $\glue$ the one of the two extrema will become part of the adjacent monotone. Which one depends on the type of extremum. For maxima, it is the one that is at a lower level. For minima, it is the one that is on a higher level. The width and level of the monotone snake box is adjusted to agree with its new adjacent extrema. \robin{I am having a hard time parsing this one, especially the phrase, "extremum less extended in the direction of the extremum". }\georg{Reworded. How's this version?}\robin{good!}
\item Two differing extrema where their relative direction disagrees with their extrema type. That is, they form a monotone. \subref{subfig:surgery:monotone-split} $\glue$ a monotone \subref{subfig:surgery:monotone-merged} that is possibly merged with existing monotones adjacent to the previous boundary snake boxes if they exist. The width is the result of the total width of all merged snake boxes, and the levels are matched with the extrema before and after the monotone.
\end{itemize}
\addedend

\subsection{Shifts as \added{an} Application of Box Snake Surgery}\marginnote{Review Q: Minor 27}

\added{Box snake structures can be utilized for persistent homology-preserving deformations without any surgery as was demonstrated in an application for audio data deformation on a fixed audio time series.\cite{essl2024deform}.}

\replaced{This can be too restrictive for some applications, where time series data arrives in real-time such as in the case of live sensor data.}{In applications, time series data can arrive in real-time from sensor data.} A typical way of handling such live updating data with finite memory is known as \emph{shifts}. A shift consists of keeping a constant length block of time series data, where old information is discarded on one side, and new information is appended on the other side. Shifts play an important role in streaming applications where new information arrives and a processing window is updated. 

Windowed processing is widely used in digital signal processing \cite{harris1978use}. \replaced{This allows for}{Hence, allowing} topological processing that is compatible with existing procedures \added{that} facilitate\deleted{s} the combination of existing techniques with box snake-based persistent homology computations and manipulations. \marginnote{Review Q: Major 3} \deleted{A non-streaming application that uses snake boxes to allow persistent homology-invariant deformations of audio data \cite{essl2024deform}} \added{We have elsewhere demonstrated that persistent homology-preserving deformations of streaming audio data can be realized} \deleted{on streaming data} via this surgical process \cite{essl2025shiftaudio}.


\subsubsection{$M$-Sample Shifts}\label{sec:mshift}
\robin{I'm assuming $\ldots$ should be used in the subsequence notation here in order to maintain consistency?}\georg{Yes, good catch! Fixed (hopefully)}
Consider a sequence of length $N+M$ and two overlapping subsets of length $N$ such that the first is the subsequence $[0,\ldots,N-1]$ and the second is the subsequence $[M,\ldots,N+M-1]$. This can be viewed as an $M$-sample shift from a sequence starting at $0$ to a sequence starting at $M$.

An $M$-shift can be realized by sequences of surgeries. The $M$-shift on a linear domain constitutes one cut, to remove the data that has been ``shifted out", and one glue of new data of the same size that is being ``shifted in".

A left-$M$-shift performs the following cut-glue sequence: $X\cut M\Rightarrow X_L, X_R$ where $X_L$ is of length $M$ and $X_R$ is of length $N-M$. This is followed by $X_R \glue Y$ where $Y$ is a new sequence of length $M$. An example of a left-$3$-shift is displayed in Figure \ref{fig:left3shift}.
A right-$M$-shift is similarly $X\cut M\rightarrow X_L, X_R$ where $X_L$ is of length $N-M$ and $X_R$ is of length $M$. This is followed by $Y\glue X_L$ with $Y$ again of length $M$.

\begin{figure}[ht]
\includegraphics[width=\textwidth]{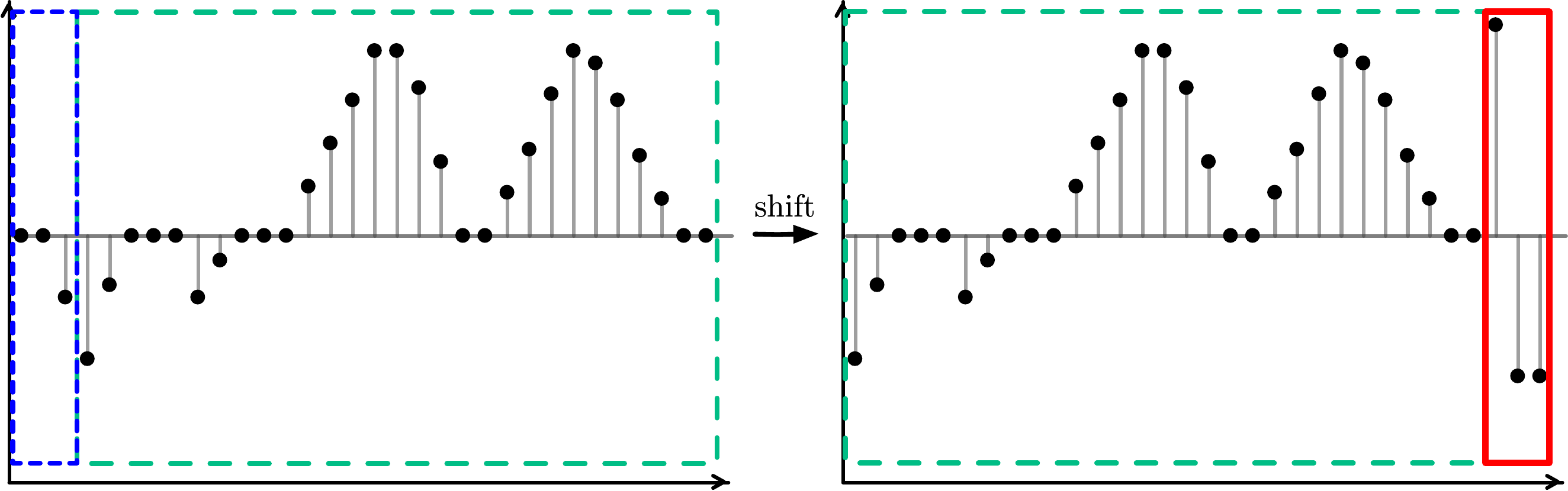}
\caption{Example of a left-$3$-shift. First the original sequence is cut on the left and the left side (blue narrow dashed) is discarded. Then the remaining piece (green wide dashed) is glued on the right with a new sequence of three samples (red solid).}\label{fig:left3shift}
\end{figure}

Any shift can be converted into a circular shift by first {\em decircularizing} (or {\em linearizing}) the circular domain with a cut a\added{t}\marginnote{Review Q: Minor 28} the point of the circular domain where the shift is to occur, and {\em recircularizing} the domain by gluing the two boundaries together after the above shift sequence is completed.

\section{Computational Cost and Real-time Performance}\marginnote{Review Y: Major 7}

\added{Overall the performance of algorithms is in line with what has been reported in the literature \cite{baryshnikov2024time,ost2024banana}. All algorithms on samples can be performed in the order $O(N)$ of the number of samples $N$. The worst-case size of the box-snake structure appears when a box snake is required to capture every individual sample, such as by alternating extrema, or by extrema interspersed with one-sample monotones. Therefore, the worst-case box-snake size is also $O(N)$. The best-case scenario is the constant function, which can be captured with one box snake independent of the size of $N$. To indicate that the box snake size can differ from the number of samples, we will denote its size by $B$.}

\added{A merge tree can be computed from a box snake in linear time on the order of the size of the box snake $B$ using a stack data structure. The number of nodes of a merge tree is precisely the number of interior maxima together with the number of minima; hence, it is at worst the total number of extrema $E$. This can be smaller than the box snake structure which also captures monotones between extrema. Both the elder and the local bar code construction rule can be implemented by a one-pass tree traversal of the merge tree. Hence, we get the following hierarchy of sizes of the data structures: $N\geq B\geq E$. We see that $N=B=E$ precisely when every sample is an extremum and both samples in the boundary are minima.}

\marginnote{Review Z: Major 3}\added{It is well established that persistent homology on functions can be computed in order of the number of samples $N$ \cite{baryshnikov2024time}. For example, Baryshnikov's algorithm \cite{baryshnikov2024time} can be extended to handle flat sample regions without changing this computational cost, by simply detecting and marking flat regions \cite{essl2023combinatorial}. In this implementation scenario, the algorithm resembles an incremental peak finder such as the {\tt find\_peaks} function of the scipy Python library. Our work establishes that flat detecting peak finders can be used without problem in the computation of sublevel set persistence of 1-dimensional data. However, box snake structures can help isolate flat extrema and improve performance of repeated follow-up processing, if the extrema in the data are sufficiently sparse.}

\robin{I wonder if this last paragraph is needed. It sounds like the take-away is that we do not have run time comparisons for different applications, but the computation was fast enough in digital audio applications.}\georg{Thought about it. Agreed. It's too confusing in this context.}



\section{Conclusion}\label{sec:conclusion}

We have jointly developed sublevel and superlevel set persistent homology on sequential data viewed as ordered finite sets over ordered levels. This point of view is close to practical computation in that it captures the finiteness of number representations and the discretization of many time series or other sequential data encountered in applications, while differing in approach to models that assume continua such as the real line and data drawn from $\mathbb{R}$. We show that concepts like non-isolated extrema and extrema that fall onto the same level can be handled in this context leading to a justification of ad hoc decisions that have already appeared elsewhere. This means that digital sequential data can be analyzed without any need to make a priori functional assumptions or potential perturbations of \deleted{the} data. We describe a duality theorem between sublevel and superlevel set persistence that shows \deleted{that} in this setting, \replaced{they}{these two settings} are related and a range of duality results can be derived. We discuss the impact of circular versus linearly ordered domains, and give a surgery theory that allows us to modify output and auxiliary structures such as barcodes and box snakes for streaming applications. All our results are up to order. Hence, if data are deformed in an order-preserving fashion, our results remain valid unaltered. Numerical implementation on current day computational hardware utilizes finite number representations that preserve order. Thus, our results apply to customary numerical implementations.
\appendix
\section{Relationship of codomain \texorpdfstring{$\mathbb{R}$}{R} and linearly ordered Finite Sets}\label{app:quantsamp}

Kulisch \cite{kulisch2013computer} summarizes the key advantage of considering order structure when relating $\mathbb{R}$ and digital representations such as floating point numbers as follows:

\blockquote{\textit{``It is well known that floating-point numbers and floating-point arithmetic do not obey the rules of the real numbers $\mathbb{R}$. However, rounding is a monotone function. So the changes to the order structure are minimal. This is the reason why the order structure plays a key role for an axiomatic approach to computer arithmetic."}}

In our setting, we only need the order structure, hence giving justification to the approach taken in this paper of avoiding the use of $\mathbb{R}$ or bounded intervals thereof.

\subsection{Quantization}
Given the dominance of modeling using $\mathbb{R}$, it is helpful to understand both the process of going from an interval of $\mathbb{R}$ to a finite linearly ordered set, and how to reverse the process. The first process is known as {\em quantization} in the signal processing literature (see for example \cite[p. 43ff]{steiglitz1997digital}). The latter can be understood as a process of embedding a finite set into $\mathbb{R}$ while preserving the order.

For our purpose, we only require a weaker condition than is typically required for quantization. Quantization retains the presumption of a shared metric structure between the real value and the quantized discrete representation. However, we only require the preservation of total order. Then the assumption of a metric structure simply labels the finite ordered set. It is fruitful to think of these questions categorically.

Let $\mathbb{R}(<)$ be the real line with its order structure and let $L_N(<)$ be a finite set with an order structure. Let $\mathbb{R}_N(<)$ be a finite full subcategory of $\mathbb{R}(<)$ with $N$ objects. We can define a forgetful functor $\mathbb{R}_N(<)\Rightarrow L_N(<)$, where we only keep the order structure and forget the real numbers associated with each point. By construction, this functor is order-preserving. 

\begin{example}
    {\em Uniform quantization}: Take an interval $[a,b]\in\mathbb{R}$ with $b>a$ and $N$ quantization steps greater than $0$. Additionally, assume the standard Euclidean metric on $\mathbb{R}$. The {\em uniform quantization step} is computed from the distance $q=d(a,b)/N$. Then $\{a,a+q,\ldots, a+(N-1)*q\}$ is a finite discrete subset preserving the order on $\mathbb{R}$.
\end{example}

\begin{example}
    {\em Non-uniform quantization}: For an interval $[a,b]\in\mathbb{R}$ with $b>a$, take $N$ positive numbers $q_0,\ldots q_{n-1}\in\mathbb{R}$ such that $\sum_{i=0}^{n-1}q_i=d(a,b)$. Then $\{a,a+q_0,\ldots,q_{n-1}\}$ is a finite discrete subset that preserves order. Uniform sampling is the special case where all $q_i$ are the same.
\end{example}

\subsection{Machine Number Representation and Order-Preservation}

Much of practical software is implemented on the standardized number types provided by the hardware. This essentially means there are two types of numbers, direct interpretation of binary representation as integers or fixed-point models, or by use of hardware supported floating-point computation. It is today safe to assume that this will obey the IEEE Standard 754 and its direct revisions. Order of integers in computation is straight-forward, but due to the underlying binary representation, the same holds true for the IEEE-754 family floating point numbers \cite{kahan1996ieee,IEEE2019float}. For applications, this means that implementation of the results of this paper are correctly ordered when using ordered comparisons ($<,>,\leq,\geq$) on floating point numbers and any other order-preserving concrete number representation in computation. Otherwise standard numerical problems of floating point numerical computations \cite{goldberg1991every} are avoided.

\subsection{Geometric Realization or Embedding}

We may want to be able to go in the inverse direction of quantization and go from a finite ordered set to an embedding thereof in $\mathbb{R}(<)$. We have already seen the existence of injective maps from a finite ordered set into an ordered finite subset $\mathbb{R}_N(<)$.

The process can be understood as assigning each member in the finite ordered set a real number such that order is preserved:
$L_N(<)\rightarrowtail\mathbb{R}(<)$.

\begin{example}
    Let $\beta_0,\ldots,\beta_{n-1}$ be a finite set of numbers in $\mathbb{R}$ such that $\beta_0<\ldots<\beta_{n-1}$. Then we have a fully faithful functor: $L_N(<)\Rightarrow\mathbb{R}(<)$ that preserves the order of $L_N(<)$ in $\mathbb{R}(<)$. Observe that there is no restriction on the numbers on $\mathbb{R}$ except for the order.
\end{example}

\begin{figure}[th]
\centering
    \begin{subfigure}[t]{.325\textwidth}
    \centering
    \includegraphics[width=0.91\textwidth]{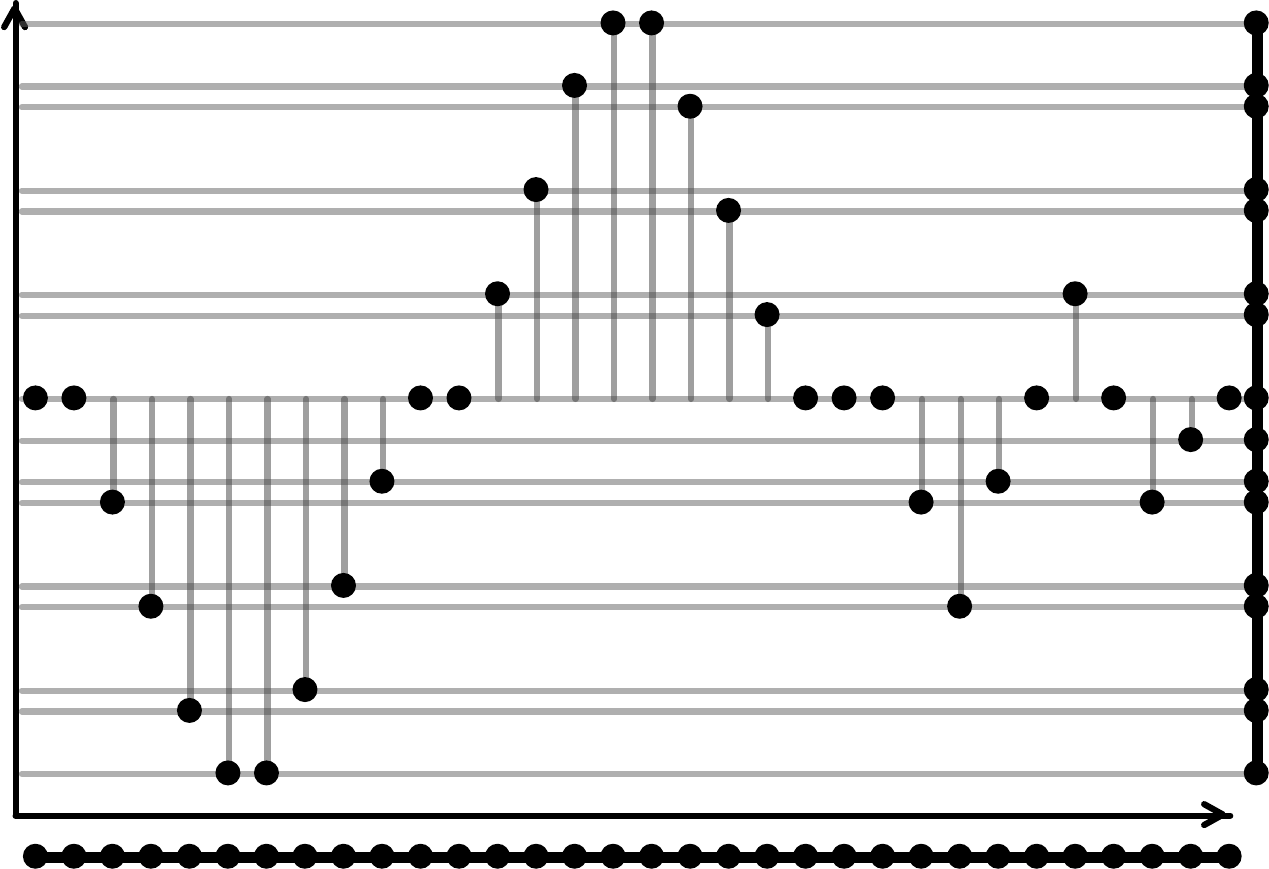}        \caption{}\label{subf:uniformA}
    \end{subfigure}
    \begin{subfigure}[t]{.325\textwidth}
    \centering
    \includegraphics[width=0.91\textwidth]{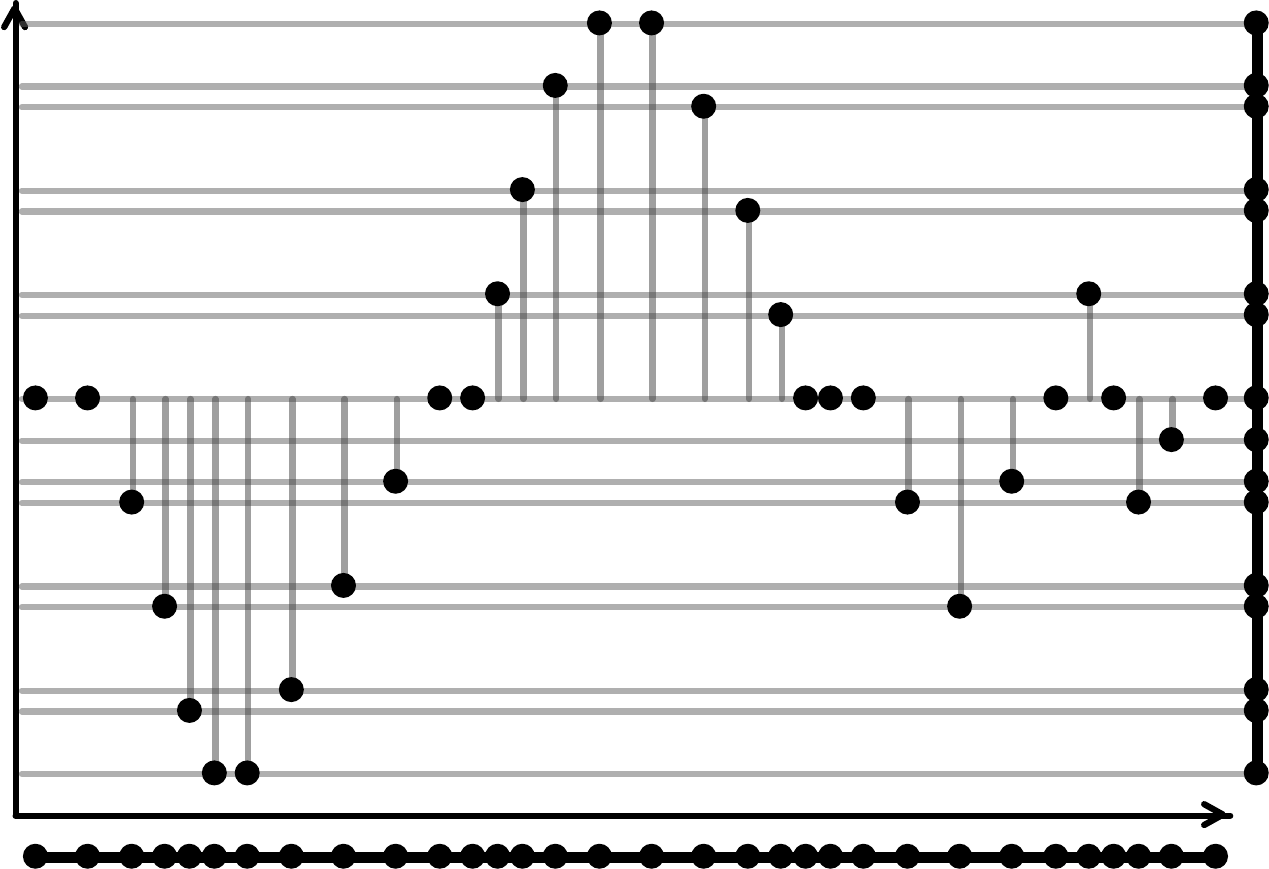}        \caption{}\label{subf:uniformB}
    \end{subfigure}
    \begin{subfigure}[t]{.325\textwidth}
    \centering
    \includegraphics[width=0.91\textwidth]{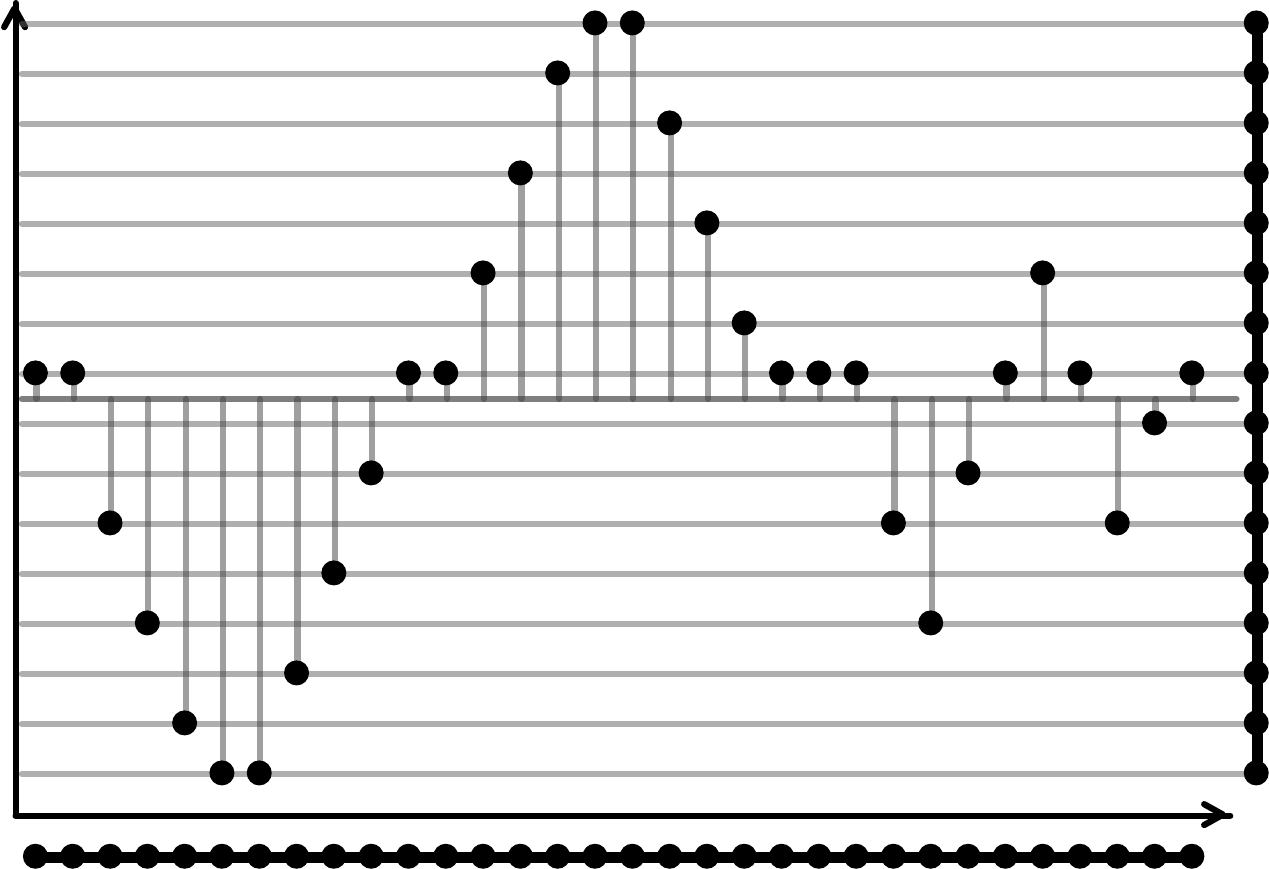}        \caption{}\label{subf:uniformC}
    \end{subfigure}
\caption{Geometric realization of the ordered set as a function graph: The x-axis should be understood as a linear (or cyclic) order. The y-axis also consists of a linearly ordered set that indexes level sets. Neither axis should be assumed to have any geometric information. However, it is convenient to relate the real axis to an ordered set. \subref{subf:uniformA} Uniform sampling follows from a uniform order-preserving embedding of the ordered samples along the x-axis. \subref{subf:uniformB} Non-uniform sampling is a consequence of an order-preserving embedding that is not uniform. \subref{subf:uniformC} Same order levels but with different (uniform) level embedding.}\label{fig:samplinganddiscretization}   
\end{figure}

Figure \ref{fig:samplinganddiscretization} provides a visualization two geometrizations of the same data set. Throughout this paper we  generally depict samples equally spaced on the paper (see Figure \ref{fig:samplinganddiscretization} (left)). However this is for familiarity only, the results equally apply for non-uniformly sampled data as depicted in Figure \ref{fig:samplinganddiscretization} (right). The sample levels can be taken from levels of functions, hence may exhibit non-uniformity, as depicted. However, it is noted that the levels are also only fixed up to order.

\section{Simplicial Complexes from Adjacency Relationships on Finite Sets}\label{app:connected}

In our setting of finite discrete functions, all connected components are subsets of $\mathbb{I}=\{0,1,\dots, n-1\}$. If we consider the standard topology on $\mathbb{R}$ and endow $\mathbb{I}$ with the subspace topology, then $\mathbb{I}$ is a totally disconnected set. This means only the singleton sets are connected and form the connected components. This is not very enlightening for understanding our data. However, we can easily change our perspective of connectedness of subsets of $\mathbb{I}$ to get something non-trivial. Namely, we can map subsets $K \subset \mathbb{I}$ to a graph $G=(V,E)$ where the connectedness of the graph matches our \marginnote{Review Y: Major 5}\replaced{notion}{definition} of connectedness \added{of adjancency in a discrete ordered subset}\footnote{\addedbegin{}Terms connectedness and connectivity have different technical meaning in different fields. Our use leans on the common use in topology \cite{munkres2000topology} and should not be confused with the use of the terms in graph theory \cite{harary2018graph}.\addedend}.

More specifically, let $|K|$ denote the cardinality of a set, $\mathcal{P}(\mathbb{I})$ denote the power set of $\mathbb{I}$, and $\mathcal{G}$ as the set of subgraphs of the path graph on $n$ vertices. Recall the path graph on $n$ vertices denoted as $P_n$ has $n$ vertices that can be listed in order $v_0, v_1, \dots, v_{n-1}$, with edges $\{v_i, v_{i+1}\}$ for $i=0,1,\dots, n-2$. 

\added{The purpose of the construction is to relate connected set members to edges, and pairs of adjacent set members to edges.}\marginnote{Review Y: Minor 18 and 19}
We define the \emph{graph realization} to be the map $f: \mathcal{P}(\mathbb{I}) \to \mathcal{G}$ where $K \mapsto G$ such that each element $k\in K$ maps to a vertex and $j,k \in K$ maps to an edge if and only if $j=k+1$ or $j+1=k$. Observe this map is bijective. Every subgraph of $P_n$ gets mapped to by the subset of $\mathbb{I}$ that has the same number of points as vertices in the subgraph. Additionally, these points are spaced appropriately to get the correct edges. Furthermore, this map is injective since distinct subsets of $\mathbb{I}$ map to different subgraphs.

Furthermore, by definition of this map, we have the following observation. 

\begin{proposition}[Preservation of Connectedness]
\label{prop:connectedness}
    Let $f:\mathcal{P}(\mathbb{I})\to \mathcal{P}$ be the graph realization map. Then $K\subset \mathbb{I}$ is a connected component of $\mathbb{I}$ if and only if $f(K)$ is the (connected) path graph with $|K|$ vertices.
\end{proposition}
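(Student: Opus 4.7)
The plan is to prove both directions by showing that each is equivalent to the characterization that when $K$ is listed in increasing order $k_0 < k_1 < \cdots < k_{m-1}$, one has $k_{i+1} = k_i + 1$ for all $0 \le i \le m-2$. This intermediate characterization is convenient because it is exactly what the definition of the graph realization map $f$ encodes.

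First I would handle the forward direction. Suppose $K$ is a connected component of $\mathbb{I}$ and write $K = \{k_0, k_1, \ldots, k_{m-1}\}$ in increasing order. By definition of a connected component, $K$ is non-empty and every element of $K$ is reachable from any other by a finite sequence of successor ($+1$) or predecessor ($-1$) operations that remain inside $K$. Assume for contradiction that there exists some index $i$ with $k_{i+1} > k_i + 1$. Then any sequence of successor/predecessor steps starting at $k_i$ would have to pass through the integer $k_i + 1 \notin K$ in order to reach $k_{i+1}$, contradicting reachability within $K$. Hence $k_{i+1} = k_i + 1$ for every $i$, and by the definition of $f$, every consecutive pair $\{k_i, k_{i+1}\}$ becomes an edge, giving exactly the path graph on $|K|$ vertices.

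Next I would handle the backward direction. Suppose $f(K)$ is the path graph on $|K|=m$ vertices. Again list $K$ in increasing order as $k_0 < k_1 < \cdots < k_{m-1}$. A path graph on $m$ vertices has exactly $m-1$ edges forming a single path, and by the edge rule of $f$, an edge exists between $j,k \in K$ only when $|j-k|=1$. Since the vertices are totally ordered by their integer values and the resulting edge set must realize a path, the only possibility is that the edges are $\{k_i, k_{i+1}\}$ with $k_{i+1} = k_i + 1$ for $i = 0, \ldots, m-2$. Thus any two elements of $K$ are connected by a chain of successor or predecessor steps inside $K$, which is the definition of $K$ being a connected component.

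The main obstacle, though modest, is being careful about the degenerate cases: when $|K|=1$, the graph realization is a single vertex, which is vacuously a path graph on one vertex, and $K$ is trivially connected; and when $K = \emptyset$, we have excluded it by the non-emptiness stipulated in the definition of a connected component. Once these edge cases are acknowledged, the argument reduces to the elementary observation above linking the numerical gap condition $k_{i+1}-k_i=1$ simultaneously to the successor/predecessor reachability and to the path-graph edge structure.
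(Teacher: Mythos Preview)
Your argument is correct and more explicit than anything in the paper: the paper does not actually supply a proof of this proposition, merely introducing it as an ``observation'' that holds ``by definition of this map.'' Your reduction to the gap condition $k_{i+1}-k_i=1$ is exactly the unpacking of that definition, so the two are aligned in spirit, with yours filling in the details.

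One terminological caution worth noting: in the backward direction you conclude that reachability inside $K$ ``is the definition of $K$ being a connected component.'' Under the standard meaning (a \emph{maximal} connected subset), this would be insufficient---for instance $K=\{1,2,3\}\subset\mathbb{I}=\{0,\ldots,9\}$ yields a path graph $f(K)$ yet is not maximal in $\mathbb{I}$. The paper's own Definition of connected component, however, is phrased only in terms of mutual reachability and does not impose maximality, so your reading matches the intended statement; just be aware that the proposition as literally written is sensitive to which convention is in force.
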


Hence, Proposition~\ref{prop:connectedness} implies there is a one-to-one correspondence between connected components of subsets of $\mathbb{I}=\{0,1,\dots, n-1\}$ and connected components of subgraphs of $P_n$. Let $T\subset \mathbb{N}$. Recall a \emph{filtration} of a set $X$ is a nested family of subsets $(X_i)_{i\in T}$ starting at the empty set, such that for all $i, j \in \mathbb{N}$ where $i\leq j$, we have $X_i \subset X_j$, and $\bigcup_{i \in T} X_i = X$. Using the graph realization map, we have that filtrations on $\mathbb{I}$ induce filtrations on $P_n$ and vice-versa. 

\begin{proposition}[Preservation of Filtrations]
\label{prop:filtrations}
    Let $f:\mathcal{P}(\mathbb{I})\to \mathcal{L}$ be the graph realization map. Then $\emptyset=X_0 \subset X_1 \subset \dots \subset X_N=\mathbb{I}$ is a filtration of $\mathbb{I}$ if and only if $\emptyset \subset f(X_0)\subset f(X_1)\subset \dots \subset f(X_N)=P_n$ is a filtration on $P_n$, the path graph on $n$ vertices. 
\end{proposition}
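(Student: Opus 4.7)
The plan is to leverage two facts about the graph realization map $f$ that were already established in the preceding discussion: that $f$ is a bijection between $\mathcal{P}(\mathbb{I})$ and $\mathcal{G}$, and that each element $k \in K$ corresponds uniquely to a vertex of $f(K)$ while edges of $f(K)$ are fully determined by the adjacency pattern within $K$. Since a filtration is nothing more than a nested chain of subsets starting at $\emptyset$ and ending at the total set, the proof reduces to checking that $f$ preserves inclusions in both directions and that $f$ sends the endpoints $\emptyset$ and $\mathbb{I}$ to the endpoints $\emptyset$ and $P_n$.

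First I would show the inclusion-preservation lemma: $A \subset B$ in $\mathcal{P}(\mathbb{I})$ if and only if $f(A) \subset f(B)$ in $\mathcal{G}$. For the forward direction, suppose $A \subset B$. Every element of $A$ is an element of $B$, so the vertex set of $f(A)$ is contained in that of $f(B)$. If $\{j,k\}$ is an edge of $f(A)$, then by definition $j,k \in A$ with $|j-k|=1$, so $j,k \in B$ and $\{j,k\}$ is also an edge of $f(B)$. Hence $f(A) \subset f(B)$. For the reverse direction, suppose $f(A) \subset f(B)$ as subgraphs. Then the vertex set of $f(A)$ is contained in that of $f(B)$, and since the correspondence between elements of a subset of $\mathbb{I}$ and vertices of its image is bijective, this yields $A \subset B$.

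Second, I would verify the boundary cases: $f(\emptyset)$ has no vertices and no edges, so it is the empty subgraph; and $f(\mathbb{I})$ has all $n$ vertices $0,1,\dots,n-1$ and all edges $\{i,i+1\}$ for $i=0,\dots,n-2$, which is exactly $P_n$. These identifications match the endpoints of the two filtrations in the statement.

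Putting these together, given a chain $\emptyset = X_0 \subset X_1 \subset \dots \subset X_N = \mathbb{I}$, applying $f$ term by term and using the inclusion-preservation lemma yields $\emptyset \subset f(X_1) \subset \dots \subset f(X_N) = P_n$, which is a filtration of $P_n$. Conversely, given such a filtration on $P_n$, bijectivity of $f$ produces unique preimage sets $X_i = f^{-1}(f(X_i))$, and the reverse implication of the lemma guarantees the chain of inclusions on $\mathbb{I}$, with endpoints pinned by the second step. I do not anticipate a genuine obstacle here; the only point requiring mild care is the reverse direction of the inclusion lemma, where one must be clear that ``subgraph inclusion'' refers to containment of both vertex sets and edge sets, so that inclusion of vertex sets alone suffices to recover the underlying subset of $\mathbb{I}$.
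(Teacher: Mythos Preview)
Your proposal is correct and follows essentially the same route as the paper: verify that $f$ sends $\emptyset$ to $\emptyset$ and $\mathbb{I}$ to $P_n$, and that $f$ preserves inclusions, then apply this termwise to the chain. If anything, your treatment of the reverse direction is more explicit than the paper's, which simply appeals to a ``symmetric argument'' and the bijectivity of $f$; your observation that subgraph containment forces vertex-set containment, and hence $A\subset B$, makes that step precise.
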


\begin{proof}
    First we assume $\emptyset=X_0 \subset X_1 \subset \dots \subset X_N=\mathbb{I}$ is a filtration and show $\emptyset \subset f(X_0)\subset f(X_1)\subset \dots \subset f(X_N)=P_n$ is a filtration on $P_n$. By definition of $f$, $f(\emptyset)=\emptyset$ and $f(\mathbb{I})=P_n$. Additionally, if $X_i \subset X_j$, then $X_j = X_i \sqcup K$ where $K$ is a subset of points of $\mathbb{I}$. Hence, $f(X_j)$ can be partitioned into two subgraphs, $f(X_i)$ and $f(K)$, showing that $f(X_i)$ is a subgraph of $f(X_j)$. This shows that $\emptyset \subset f(X_0)\subset f(X_1)\subset \dots \subset f(X_N)=P_n$ is a filtration on $P_n$.

    To show the other direction, we can apply a symmetric argument and utilize the fact that $f$ is bijective.
\end{proof}

We see from Propositions~\ref{prop:connectedness} and \ref{prop:filtrations}, we can use the standard language of sublevel set persistent homology on simplicial complexes to analyze the sublevel set persistent homology of finite discrete sequences. 

\section{Software Realization}

The ideas discussed in this paper have been realized as an executable software in JavaScript and the source code is available at \url{https://github.com/gessl/DiscreteLevelSetPersistence/tree/FoDS}. Most of the figures in this paper are rendered with this implementation. Technical fine details such as detailed adjustments of snake boxes under surgery have been omitted from the discussion in the body of the paper for length but can be found in the source code. All algorithms associated with computing barcodes via various methods, computing and surgery of box snake structures, the merge tree algorithm are all functionally implemented and can be found in {\tt LevelSetPersistence.js}. The file {\tt discretegraph.js} provides the graph rendering used for the figures and the interactive demonstration. Furthermore, {\tt Interactions.js} realizes the interaction as well as audio playback discussed elsewhere \cite{essl2024deform}. JavaScript is not a popular language for academic dissemination of code. However, given the application domain of interest of one of the authors and the ability to create interactive demonstration, it was found to be preferable over other alternatives such as Python. A full running demonstration of discrete level set persistence using this code can be found at \url{https://gessl.github.io/DiscreteLevelSetPersistenceDemo/}.



\bibliographystyle{AIMS}
\bibliography{sn-bibliography}

\end{document}